\documentclass[11pt]{amsart}
\usepackage{amssymb,amsmath}
\usepackage{graphicx}
\usepackage{subcaption}
\usepackage{bm,bbm}
\usepackage{color}
\usepackage{tikz}
\usepackage{pgfplots}
\usepackage{array,blkarray}
\usepackage[hidelinks]{hyperref}
\usepackage{cleveref}
\usepackage{mathtools}
\usepackage{enumitem}
\usepackage{tabularray}
\usepackage{booktabs}
\theoremstyle{theorem}
\newtheorem{theorem}{Theorem}

\newtheorem{corollary}[theorem]{Corollary}
\theoremstyle{definition}

\newtheorem{lemma}[theorem]{Lemma}
\theoremstyle{remark}
\newtheorem{remark}[theorem]{Remark}

\numberwithin{theorem}{section}
\numberwithin{equation}{section}
\numberwithin{table}{section}
\numberwithin{figure}{section}

\AddToHook{env/corollary/begin}{\crefalias{theorem}{corollary}}
\AddToHook{env/lemma/begin}{\crefalias{theorem}{lemma}}
\AddToHook{env/proposition/begin}{\crefalias{theorem}{proposition}}
\AddToHook{env/remark/begin}{\crefalias{theorem}{remark}}
\AddToHook{env/example/begin}{\crefalias{theorem}{example}}

\newcommand{\V}{\ensuremath{\mathcal{V}}}
\newcommand{\Q}{\ensuremath{\mathcal{Q}} }
\def\R{\mathbb{R}}

\definecolor{myBlue1}{RGB}{101,149,239}  % cornflowerblue
\definecolor{myBlue2}{RGB}{113,104,238} % medium slate blue	
\definecolor{myBlue3}{RGB}{30,144,255} % dodger blue
\definecolor{myGreen1}{RGB}{154,204,50} % yellow green
\definecolor{myGreen2}{RGB}{69,169,0} % chatreuse
\definecolor{myGreen3}{RGB}{154,205,50} % olive Drap
\definecolor{myGreen4}{RGB}{105,139,34} % olive Drap2
\definecolor{myRed1}{RGB}{210,105,30} % chocolate
\definecolor{myRed2}{RGB}{165,42,42} % brown
\definecolor{myRed3}{RGB}{139,26,26} % firebrick
\definecolor{lightgray}{RGB}{175,175,175} % light gray
\definecolor{myLGray}{RGB}{225,225,225} % light light gray
\definecolor{mycolor0}{rgb}{0.66,0.66,0.66}% grey
\definecolor{mycolor4}{rgb}{0.00000,0.44700,0.74100}% blue
\definecolor{mycolor1}{rgb}{0.85000,0.32500,0.09800}% orange
\definecolor{mycolor2}{rgb}{0.92900,0.69400,0.12500}% yellow
\definecolor{mycolor3}{rgb}{0.67000,0.74700,0.14100}% green
\definecolor{mycolor5}{rgb}{0.49400,0.18400,0.55600}% violet
\definecolor{mycolor6}{rgb}{0.85000,0.32500,0.09800}%
\DeclareMathOperator{\tr}{tr}

\DeclareMathOperator{\diag}{diag}

\newcommand{\bdf}{\textnormal{\fontseries{lc}\selectfont\ttfamily BDF}}
\newcommand{\ddf}{\textnormal{\fontseries{lc}\selectfont\ttfamily DDF}}

\newcommand{\calB}{\ensuremath{\mathcal{B}} }

\newcommand{\calD}{\ensuremath{\mathcal{D}} }

\newcommand{\calG}{\ensuremath{\mathcal{G}} }
\newcommand{\calJ}{\ensuremath{\mathcal{J}} }
\newcommand{\calK}{\ensuremath{\mathcal{K}} }

\newcommand{\calR}{\ensuremath{\mathcal{R}} }

\newcommand{\calT}{\ensuremath{\mathcal{T}} }

\def\dr{\,\text{d}r}
\def\ds{\,\text{d}s}
\def\dt{\,\text{d}t}
\def\dx{\,\text{d}x}

\newcommand{\ddt}{\ensuremath{\frac{\text{d}}{\text{d}t}} }

\newcommand{\bmat}[1]{\begin{bmatrix}#1\end{bmatrix}}
\newcommand{\bsmat}[1]{\begin{bsmallmatrix}#1\end{bsmallmatrix}}
\newcommand{\wt}{\widetilde}

\newcommand{\tsum}{\bgroup\textstyle\sum\egroup}

\DeclarePairedDelimiter{\set}{\{}{\}}
\DeclarePairedDelimiter{\pset}{(}{)}

\DeclarePairedDelimiter{\abs}{\lvert}{\rvert}
\DeclarePairedDelimiter{\aset}{\langle}{\rangle}
\DeclarePairedDelimiter{\bigaset}{\big\langle}{\big\rangle}
\DeclarePairedDelimiter{\norm}{\lVert}{\rVert}

\begin{document}
\title[Second-order Bulk--surface Splitting]{Second-order Bulk--surface Splitting for the Wave Equation with Kinetic Boundary Conditions}
\author[]{R.~Altmann$^\dagger$, R.~Morandin$^\dagger$}
\address{${}^{\dagger}$ Institute of Analysis and Numerics, Otto von Guericke University Magdeburg, Universit\"atsplatz 2, 39106 Magdeburg, Germany}
\email{\{robert.altmann,riccardo.morandin\}@ovgu.de}
\thanks{Research funded by the support of the Deutsche Forschungsgemeinschaft (DFG, German Research Foundation) through the project 446856041. }
\date{\today}
%
%
%=============================================================================
%=========  Abstract
%=============================================================================
\begin{abstract}
This paper is devoted to the numerical analysis of a second-order bulk--surface splitting scheme for the semi-linear wave equation with kinetic boundary conditions. 
The construction is based on the interpretation of the equations as coupled system and the implementation of different difference formulae for the discrete states depending on their exact position in the system equations. This results in a $4$-step scheme which decouples bulk and surface dynamics. 
We prove energy stability and second-order convergence under a weak CFL condition and validate these results also numerically. 
\end{abstract}
%
%=============================================================================
%=========  Title / Contents
%=============================================================================
\maketitle
%\setcounter{tocdepth}{2}
%\tableofcontents
%
{\tiny{\bf Key words.} kinetic boundary conditions, semi-linear wave equation, bulk--surface splitting}\\
\indent
{\tiny{\bf AMS subject classifications.} {\bf 65M20}, {\bf 65M12}, {\bf 65L80}} 
% 65J08 - Num Ana in abs Spaces - abstract evolution equations
% 65J10 - Num Ana in abs Spaces - lin Operatrs
% 65M12 - numerics for PDE IBVPs - stability and convergence 
% 65M20 - numerics for PDE IBVPs - method of lines
% 65L80 - numerics of ODEs - DAEs
%
%
%=============================================================================
%=========  Introduction
%=============================================================================
\section{Introduction}
Non-standard boundary conditions arise naturally in models where the interaction between a system and its boundary plays a critical role.
In the case of wave equations, these are typically obtained by taking into account the effect of the momentum on the boundary, which is important for fluid--structure interaction as well as acoustic--elastic couplings~\cite{Hip17}.
%
%If the momentum of a wave on the boundary is taken into account, one obtains so-called non-standard boundary conditions, which 
% capture certain boundary effects
%are important for fluid--structure interaction as well as acoustic--elastic couplings~\cite{Hip17}.
Further applications include the stabilization of wave equations via a boundary feedback law~\cite{KomZ90}, the modeling of membrane vibrations of a bass drum~\cite{Vit15}, or separation processes in mixtures of two materials~\cite{GarK20}. 
% this approach enables to model one part of a coupled problem as a boundary layer, i.e., one wave system may be replaced by a wave-type equation on the boundary~\cite{Lie13,Hip17}. % Lie13 is only parabolic
Comparable boundary conditions also appear for parabolic problems~\cite{VraS13a,KovL17} or separation processes modeled by the Cahn--Hilliard equation~\cite{Gal07,GolMS11,GarK20}. 

%% kinetic bc
In this paper, we focus on a specific class of non-standard boundary conditions for the wave equation, namely {\em kinetic boundary conditions}. For this, the bulk equation is closed with a second wave equation on the surface of the considered domain. Such formulations can be derived in terms of energy balances and constitutive laws and allow a physical interpretation in the one-dimensional setting, cf.~\cite{Gol06}. 
For corresponding existence and well-posedness results, we refer to~\cite{Vit13,GraL14,Hip17}.

%% numerial treatment
In recent years, the numerical approximation of systems with non-standard boundary conditions has also received growing attention. 
% space
For the spatial discretization, one usually applies bulk--surface finite elements as introduced in~\cite{EllR13}. With regard to wave systems, this has been rigorously analyzed in~\cite{HipHS18,HocL20,HipK21},  
% HipHS18: acoustic; HocL20: kinetic; HipK21: L2-estimates 
showing different rates depending on the exact structure of boundary conditions (e.g.~due to advection or strong damping terms).
% time
Regarding the temporal discretization, the Crank--Nicolson method shows second-order convergence. This holds for the original fully implicit version (cf.~\cite{Hip17}) as well as for the implicit--explicit variant introduced in~\cite{HocL21}. 
% General (Gauss--)Runge--Kutta methods were considered in~\cite{HipK20}. 
However, even if the nonlinearity is treated explicitly, a coupled linear system needs to be solved in every time step.  

%% content of paper
To circumvent this issue, we propose a novel bulk--surface splitting scheme of second order for the wave equation with kinetic boundary conditions. In our approach, the equations are interpreted as a coupled system of bulk and surface dynamics and modeled as {\em partial differential--algebraic equation}; see~\cite{LamMT13} for an introduction. Note that such an approach allows different discretizations in the bulk and on the surface, which is beneficial if the dynamics %in the bulk and on the surface
have different characteristic length or time scales, cf.~\cite{AltV21}. 
The design of the proposed scheme is inspired by~\cite{AltKZ22}, where parabolic systems were considered. More precisely, we use function evaluations at former time points in order to decouple the system. The resulting four-step scheme achieves, as we prove in this paper, energy stability and second-order accuracy under a (weak) CFL condition of the form~$\tau \lesssim \sqrt{h}$, where $\tau$ and $h$ denote the temporal and spatial discretization parameters, respectively. 
\smallskip

%% notation
\emph{Notation.} 
Throughout the paper, we write $(u,v)$ for concatinated vectors, i.e., $(u,v) = [u^T, v^T]^T$. Moreover, $\|u\|_{M}$ denotes the classical $M$-norm with $\|u\|_M^2 = u^T Mu$. Based on this, we define 
\[
	\|u\|^p_{L^p(s,t),M}
	= \int_s^t \|u\|_M^p \dt, \qquad 
	\|u\|_{L^p,M}
	= \|u\|_{L^p(0,T),M}
\]
for $p < \infty$ as well as $\|u\|_{L^\infty,M} = \sup_{t} \|u(t)\|_{M}$.
%
%
%=============================================================================
%=========  Models
%=============================================================================
\section{The Wave Equation with Kinetic Boundary Conditions}
\label{sect:formulation}
This section is devoted to the introduction of the system equations which are in focus of this paper. This includes a model problem with kinetic boundary conditions as well as an extended model including possible dissipation terms. 

In preparation for the construction of the splitting scheme in Section~\ref{sect:schemesKinetic}, we formulate the system as a coupled problem. Hence, we consider two dynamic equations -- one in the bulk~$\Omega$ and one on the surface~$\Gamma\coloneqq\partial\Omega$ -- which we couple through a simple constraint, incorporated by the Lagrange multiplier technique. For this, we introduce the function spaces 
\[
	\V 
	\coloneqq H^1(\Omega)\times H^1(\Gamma), \qquad
	\Q 
	\coloneqq H^{-1/2}(\Gamma).
\]
%
%
%======================================================================
\subsection{Model problem with kinetic boundary conditions}
\label{sect:formulation:modelProblem}
On the bounded domain~$\Omega$ with $C^2$ boundary~$\Gamma\coloneqq\partial\Omega$, we consider the system 
\begin{subequations}
	\label{eq:kineticBC}
	\begin{align}
		\ddot u - \Delta u 
		&= f_\Omega(t,u)\qquad \text{in }\Omega, \label{eq:kineticBC:a}\\
		\ddot u - \Delta_\Gamma u + u + \partial_n u 
		&= f_\Gamma(t,u)\qquad \text{on }\Gamma  \label{eq:kineticBC:b}
	\end{align}
with initial conditions
	\begin{align}
		u(0)
		= u^0, \qquad 
		\dot u(0) 
		= \dot u^0.
	\end{align}
\end{subequations}
Therein, %$\beta_\Gamma, \kappa_\Gamma$ are non-negative constants,
$\partial_n$ denotes the derivative in normal direction of the boundary and $\Delta_\Gamma$ is the so-called {\em Laplace--Beltrami operator}, cf.~\cite[Ch.~16.1]{GilT01}. Moreover, $f_\Omega$ and $f_\Gamma$ denote (sufficiently smooth) nonlinearities. For a corresponding abstract formulation, we refer to~\cite{HipK21,HocL20}. 

For the formulation as coupled system, we follow~\cite{Alt23} and introduce $p$ as the trace of $u$. This leads to the equivalent system 
\begin{align*}
	\ddot u - \Delta u 
	&= f_\Omega(t,u)\qquad \text{in }\Omega,\\
	\ddot p - \Delta_\Gamma p + p + \partial_n u 	
	&= f_\Gamma(t,p)\qquad \text{on }\Gamma, \\
	u - p
	&= 0\hspace{1.8cm} \text{on }\Gamma
\end{align*}
with initial conditions
\begin{align*}
	u(0)
	= u^0, \quad 
	\dot u(0) 
	= \dot u^0, \qquad
	p(0)
	= u^0|_\Gamma, \quad 
	\dot p(0) 
	= \dot u^0|_\Gamma.
\end{align*}
Note that these conditions are consistent with the constraint $u = p$ on $\Gamma$. For an abstract formulation, we introduce the operators 
\[
	\calK_\Omega\colon H^1(\Omega)\to[H^1(\Omega)]^*,\qquad
	\calK_\Gamma\colon H^1(\Gamma)\to[H^1(\Gamma)]^*, \qquad 
	\calB\colon \V\to\Q^*=H^{1/2}(\Gamma)
\]
by
\[
	\aset{\calK_\Omega u,v} 
	= \int_\Omega\nabla u\cdot\nabla v\dx, \qquad 
	\aset{\calK_\Gamma p,q} 
	= \int_\Gamma\nabla_\Gamma p\cdot\nabla_\Gamma q\dx + \int_\Gamma pq\dx 
\]
and $\calB(u,p) = p - \tr u$. This then leads to the system 
\begin{subequations}
\label{eq:kinetic:PDAE}
\begin{alignat}{3}
	\bmat{\ddot u \\ \ddot p} 
	+ \bmat{\calK_\Omega \\ & \calK_\Gamma} & \bmat{u \\ p} 
	+ \calB^*\lambda 
	&&= \bmat{f_\Omega(t,u) \\ f_\Gamma(t,p)} \qquad && \text{in }\V^*, \\
	\calB & \bmat{u \\ p} &&= 0 \qquad && \text{in }\Q^*.
\end{alignat}
\end{subequations}
\begin{remark}
Under sufficient regularity assumptions, system~\eqref{eq:kinetic:PDAE} is equivalent to the original PDE~\eqref{eq:kineticBC} and the Lagrange multiplier satisfies $\lambda = \partial_n u$. 
\end{remark}
\begin{remark}
The block operator $\diag(\calK_\Omega, \calK_\Gamma)$ is obviously symmetric and nonnegative. On the constraint manifold, i.e., for $(u,p) \in \V$ with $\tr u = p$, we have in addition 
\[
	\Big\langle \bmat{u \\ p}, \bmat{\calK_\Omega \\ & \calK_\Gamma} \bmat{u \\ p}\Big \rangle
	= \| \nabla u\|^2 + \| p \|_{H^1(\Gamma)}^2
	= \| \nabla u\|^2 + \|\! \tr u \|_{H^1(\Gamma)}^2, 
\]
which equals zero if and only if $u$ and $p$ vanish. 
\end{remark}
%
%
%======================================================================
\subsection{Extensions of the model}
We discuss some possible extensions of the model. In particular, we consider (strong) damping terms which do not change the differential--algebraic structure of the coupling. Following~\cite{HipK21}, we may also consider the system 
\begin{subequations}\label{eq:extPDE}
	\begin{align}
		\ddot u - \delta_\Omega\Delta\dot u - \beta_\Omega\Delta u + (\alpha_\Omega + \gamma_\Omega\cdot\nabla)\dot u + \kappa_\Omega u 
		&= f_\Omega(t,u) &\text{in }\Omega, \\
		\ddot p - \delta_\Gamma\Delta_\Gamma\dot p - \beta_\Gamma\Delta_\Gamma p + (\alpha_\Gamma + \gamma_\Gamma\cdot\nabla_\Gamma)\dot p + \kappa_\Gamma p + \delta_\Omega\partial_n\dot u + \beta_\Omega\partial_n u 
		&= f_\Gamma(t,p) &\text{on }\Gamma, \\
		u - p 
		&= 0 &\text{on }\Gamma,
	\end{align}
\end{subequations}
with coefficients $\alpha_\Omega,\alpha_\Gamma,\beta_\Omega,\beta_\Gamma,\delta_\Omega,\delta_\Gamma,\kappa_\Omega, \geq 0$. 
% maybe \kappa_\Gamma > 0 benificial for stability analysis, since they imply that the corresponding operator is positive instead of just nonnegative.
Moreover, we consider vector fields $\gamma_\Omega\in W^{1,\infty}(\Omega)$ and $\gamma_\Gamma\in W^{1,\infty}(\Gamma)$ satisfying 
\begin{subequations}\label{eq:adv_cond}
	\begin{alignat}{2}
		& \alpha_\Omega - \tfrac{1}{2}\,\nabla\cdot \gamma_\Omega 
		\ge 0 &&\qquad\text{in }\Omega, \label{eq:adv_cond_bulk}\\
		& \alpha_\Gamma + \tfrac{1}{2}\, (\gamma_\Omega\cdot n-\nabla_\Gamma\cdot \gamma_\Gamma) 
		\geq 0 &&\qquad\text{on }\Gamma. \label{eq:adv_cond_surf}
	\end{alignat}
\end{subequations}
The corresponding abstract formulation %is then given by 
%
%\begin{align*}
%	(\ddot u, v)_\Omega 
%	+ (\delta_\Omega\nabla\dot u+\beta_\Omega\nabla u,\nabla v)_\Omega 
%	+ ((\alpha_\Omega + \gamma_\Omega\cdot\nabla)\,\dot u + \kappa_\Omega u,v)_\Omega 
%	- (\delta_\Omega\partial_n\dot u+\beta_\Omega\partial_n u,v)_\Gamma 
%	&= (f_\Omega(t,u),v)_\Omega, \\
%	(\ddot p,q)_\Gamma 
%	+ (\delta_\Gamma\nabla_\Gamma\dot p+\beta_\Gamma\nabla_\Gamma p,\nabla_\Gamma q)_\Gamma 
%	+ ((\alpha_\Gamma + \gamma_\Gamma\cdot\nabla_\Gamma)\dot p +\kappa_\Gamma p+\delta_\Omega\partial_n\dot u+\beta_\Omega\partial_n u,q)_\Gamma 
%	&= (f_\Gamma(t,p),q)_\Gamma.
%\end{align*}
%
in operator form is then given by
\begin{subequations}
\label{eq:fullModel:PDAE}
\begin{alignat}{3}
	\bmat{\ddot u \\ \ddot p} + \bmat{\calD_\Omega \\ & \calD_\Gamma} \bmat{\dot u \\ \dot p} + \bmat{\calK_\Omega \\ & \calK_\Gamma} & \bmat{u \\ p} + \calB^*\lambda 
	&&= \bmat{f_\Omega(t,u) \\ f_\Gamma(t,p)} \qquad && \text{in }\V^*, \\
	\calB & \bmat{u \\ p} 
	&&= 0 \qquad && \text{in }\Q^*,
\end{alignat}
\end{subequations}
where $\calD_\Omega,\calK_\Omega\colon H^1(\Omega)\to[H^1(\Omega)]^*$ and $\calD_\Gamma,\calK_\Gamma\colon H^1(\Gamma)\to[H^1(\Gamma)]^*$ are defined by
\begin{subequations}
	\begin{align}
		\aset{\calD_\Omega\dot u,v} 
		&= \delta_\Omega\int_\Omega\nabla\dot u\cdot\nabla v\dx + \alpha_\Omega\int_\Omega\dot uv\dx + \int_\Omega(\gamma_\Omega\cdot\nabla\dot u)\, v\dx, \\
		\aset{\calK_\Omega u,v} 
		&= \beta_\Omega \int_\Omega\nabla u\cdot\nabla v\dx + \kappa_\Omega\int_\Omega uv\dx, \\
		\aset{\calD_\Gamma\dot p,q} 
		&= \delta_\Gamma\int_\Gamma\nabla_\Gamma\dot p\cdot\nabla_\Gamma q\dx + \alpha_\Gamma\int_\Gamma\dot pq\dx + \int_\Gamma(\gamma_\Gamma\cdot\nabla_\Gamma\dot p)\,q\dx \\
		\aset{\calK_\Gamma p,q} 
		&= \beta_\Gamma\int_\Gamma\nabla_\Gamma p\cdot\nabla_\Gamma q\dx + \kappa_\Gamma\int_\Gamma pq\dx.
	\end{align}
\end{subequations}
Moreover, the constraint operator equals $\calB(u,p) = p-\tr u$ as before.
\begin{remark}
Under sufficient regularity assumptions, the operator equation is equivalent to the original system and $\lambda = \delta_\Omega\partial_n\dot u + \beta_\Omega\partial_n u$.
\end{remark}
%
%Furthermore, by testing with $(\dot u,\dot p)$ we obtain
%%
%\begin{align*}
%	& \frac{1}{2}\ddt \int_\Omega (\dot u^2 + \mu\dot p^2 + \norm{\nabla u}^2 + \beta\norm{\nabla_\Gamma p}^2 + \kappa p^2) \dx \\
%	& \leq \frac{1}{2}\ddt \int_\Omega (\dot u^2 + \mu\dot p^2 + \norm{\nabla u}^2 + \beta\norm{\nabla_\Gamma p}^2 + \kappa p^2) \dx \\
%	&\qquad + \int_\Omega\pset[\big]{ \delta_\Omega\norm{\nabla\dot u}^2 + (\alpha_\Omega-\tfrac{1}{2}\nabla\cdot \gamma_\Omega)\dot u^2 } \dx \\
%	&\qquad + \int_\Gamma\pset[\bigg]{ \delta_\Gamma\norm{\nabla_\Gamma\dot p}^2 + \pset[\big]{ \alpha_\Gamma + \tfrac{1}{2}(\gamma_\Omega\cdot n - \nabla_\Gamma\cdot \gamma_\Gamma) }\dot p^2 } \dx \\
%	&= \int_\Omega f_\Omega(t,u)\dot u\dx + \int_\Gamma f_\Gamma(t,p)\dot p\dx.
%\end{align*}
%
Although the operators $\calK_\Omega$ and $\calK_\Gamma$ are self-adjoint and (semi) positive as in the model problem, the damping operators do not have this nice property. 
% However, while the operator $\calD=\diag(\calD_\Omega,\calD_\Gamma):H^1(\Omega)\times H^1(\Gamma)\to[H^1(\Omega)\times H^1(\Gamma)]^*$ is such that $\calD+\calD^*\geq 0$
In fact, the operators $\calD_\Omega$ and $\calD_\Gamma$ are neither self-adjoint nor non-negative. 

In the following, we propose an alternative operator formulation with better structural properties. For this, we introduce the operators $\calR_\Omega\colon H^1(\Omega)\to[H^1(\Omega)]^*$, $\calR_\Gamma\colon H^1(\Gamma)\to[H^1(\Gamma)]^*$, and $\calG\colon H^1(\Gamma)\to[H^1(\Omega)]^*$, defined as
\begin{subequations}
	\begin{align}
		\aset{\calR_\Omega\dot u,v} 
		&= \delta_\Omega\int_\Omega\nabla\dot u\cdot\nabla v\dx + \int_\Omega \big(\alpha_\Omega-\tfrac{1}{2}\nabla\cdot \gamma_\Omega\big)\,\dot uv\dx, \\
		\aset{\calR_\Gamma\dot p,q} 
		&= \delta_\Gamma\int_\Gamma\nabla_\Gamma\dot p\cdot\nabla_\Gamma q\dx + \int_\Gamma \big(\alpha_\Gamma+\tfrac{1}{2}(\gamma_\Omega\cdot n-\nabla_\Gamma\cdot \gamma_\Gamma)\big)\,\dot pq \dx, \label{eq:bilinear:R_Ga} \\
		\aset{\calG\dot p,v} 
		&= \frac{1}{2}\int_\Gamma(\gamma_\Omega\cdot n)\,\dot p\, \tr(v)\dx.
	\end{align}
\end{subequations}
Note that $\calR_\Omega=\calR_\Omega^*\geq 0$ and $\calR_\Gamma=\calR_\Gamma^*\geq 0$.
As long as $\calB(\dot u,\dot p) = 0$ holds, which is implied by $\calB(u,p) = 0$, we have
\begin{align*}
	\bigaset{ \tfrac{1}{2}(\calD_\Omega+\calD_\Omega^*)\,\dot u, v } 
	&= \aset{\calR_\Omega\dot u,v} + \aset{\calG(\tr \dot u),v}
	= \aset{\calR_\Omega\dot u,v} + \aset{\calG\dot p,v}, \\
	\bigaset{ \tfrac{1}{2}(\calD_\Gamma+\calD_\Gamma^*)\, \dot p, q } 
	&= \aset{\calR_\Gamma\dot p,q} - \aset{\calG q, \dot u}
	= \aset{\calR_\Gamma\dot p,q} - \aset{\calG^*\dot u,q}.
\end{align*}
Then, by defining $\calJ_\Omega\coloneqq\frac{1}{2}(\calD_\Omega-\calD_\Omega^*)$ and $\calJ_\Gamma\coloneqq\frac{1}{2}(\calD_\Gamma-\calD_\Gamma^*)$, we see that system~\eqref{eq:fullModel:PDAE} is equivalent to 
\begin{subequations}
	\begin{alignat}{3}
		\bmat{\ddot u \\ \ddot p} + \bmat{\calJ_\Omega+\calR_\Omega & \calG \\ -\calG^* & \calJ_\Gamma+\calR_\Gamma}\bmat{\dot u \\ \dot p} + \bmat{\calK_\Omega \\ & \calK_\Gamma} & \bmat{u \\ p} + \calB^*\lambda 
		&&= \bmat{f_\Omega(t,u) \\ f_\Gamma(t,p)} \qquad && \text{in }\V^*, \\
		\calB & \bmat{u \\ p} 
		&&= 0 \qquad && \text{in }\Q^*.
	\end{alignat}
\end{subequations}
This formulation has the disadvantage of coupling $u$ and $p$ in the operator applied to $(\dot u,\dot p)$. This coupling, however, only acts on the boundary. Therefore, we incorporate the extra term in the Lagrange multiplier, leading to the operator system
\begin{subequations}\label{eq:extPDE_abstract}
	\begin{alignat}{3}
		\bmat{\ddot u \\ \ddot p} + \bmat{\calJ_\Omega+\calR_\Omega \\ & \calJ_\Gamma+\calR_\Gamma}\bmat{\dot u \\ \dot p} + \bmat{\calK_\Omega \\ & \calK_\Gamma} & \bmat{u \\ p} + \calB^*\nu 
		&&= \bmat{f_\Omega(t,u) \\ f_\Gamma(t,p)} \qquad && \text{in }\V^*, \\
		\calB & \bmat{u \\ p} 
		&&= 0 \qquad && \text{in }\Q^*
	\end{alignat}
\end{subequations}
where the new Lagrange multiplier $\nu$ satisfies
\begin{equation}
	\nu 
	= \lambda - \tfrac{1}{2}(\gamma_\Omega\cdot n)\, \dot p 
	= \delta_\Omega\partial_n\dot u + \beta_\Omega\partial_n u - \tfrac{1}{2}(\gamma_\Omega\cdot n)\, \tr \dot u.
\end{equation}
\begin{remark}
	If we treat the inhomogeneities $f_\Omega$ and $f_\Gamma$ as input variables, the system \eqref{eq:extPDE_abstract} can also be interpreted as the interconnection of two port-Hamiltonian systems (see e.g.~\cite{VilLZM06,RamS04,Mor24phd}). In fact, the bulk PDE can be interpreted as a boundary-controlled port-Hamiltonian system with input $\mathfrak u_b=(\tr\dot u,f_\Omega)$ and output $\mathfrak y_b=(\nu,\dot u)$, while the surface PDE can be interpreted as a distributed-input pH system with input $\mathfrak u_s=(-\nu,f_\Gamma)$ and output $\mathfrak y_s=(\dot p,\dot p)$.
	The interconnection relation $\mathfrak u_{b,1}=\mathfrak y_{s,1},\ \mathfrak u_{s,1}=-\mathfrak y_{b,1}$ defines a Dirac structure and therefore preserves the structure.
	
	As an immediate consequence of this observation, we have the power balance equation and dissipation inequality
	\begin{multline}\label{eq:PBE_ext}
		\ddt\bigg(
		\frac{1}{2}\norm{\dot u}_{L^2}^2
		+ \frac{1}{2}\norm{\nabla u}_{L^2}^2
		+ \frac{1}{2}\norm{\dot p}_{L^2}^2
		+ \frac{1}{2}\norm{\nabla p}_{L^2}^2 \bigg)
		\\
		= -\aset{\calR_\Omega\dot u,\dot u}_{L^2}
		-\aset{\calR_\Gamma\dot p,\dot p}_{L^2}
		+ \int_\Omega f_\Omega \dot u\dx
		+ \int_\Gamma f_\Gamma \dot p\dx
		\leq \int_\Omega f_\Omega \dot u\dx
		+ \int_\Gamma f_\Gamma \dot p\dx,
	\end{multline}
	which can be also easily verified independently.
	Note that, for the model problem \eqref{eq:kinetic:PDAE}, \eqref{eq:PBE_ext} simply yields
	\begin{equation}\label{eq:PBE}
		\ddt\bigg(
		\frac{1}{2}\norm{\dot u}_{L^2}^2
		+ \frac{1}{2}\norm{\nabla u}_{L^2}^2
		+ \frac{1}{2}\norm{\dot p}_{L^2}^2
		+ \frac{1}{2}\norm{\nabla p}_{L^2}^2 \bigg)
		= \int_\Omega f_\Omega \dot u\dx
		+ \int_\Gamma f_\Gamma \dot p\dx,
	\end{equation}
	due to the absence of damping.
\end{remark}
For the sake of simplicity, we mostly focus on the model problem introduced in Section~\ref{sect:formulation:modelProblem}.
Most results also apply to the extended model.
Whenever the differences between models require special attention, we will comment on it.
%
%======================================================================
\subsection{Spatial discretization} 
Since this paper focuses on splitting methods, we only consider standard bulk--surface finite elements for the spatial discretization. We summarize the resulting matrices and refer to~\cite{Dzi88,EllR13,KovL17} for further details. Moreover, we focus on the model problem introduced in Section~\ref{sect:formulation:modelProblem}.
 
The spatial domain $\Omega$ is approximated by a quasi-uniform family of meshes~$\calT_h^\Omega$, where~$h$ is the maximal mesh width and boundary vertices are assumed to be part of $\Gamma$, cf.~\cite{EllR13}. We further denote by $\Omega_h$ the polyhedral domain obtained as the union of all elements of $\calT_h^\Omega$ and by $\calT_h^\Omega|_\Gamma$ the restriction of $\calT_h^\Omega$ to the boundary. 
We introduce a second mesh $\calT_h^\Gamma$ on the boundary, which 
%with applications in mind 
is assumed to be a refinement of $\calT_h^\Omega|_\Gamma$. In particular, its maximal mesh width is bounded by $h$. 
We denote by $\Gamma_h$ the polyhedral surface obtained as the union of all elements of $\calT_h^\Gamma$, noting that $\Gamma_h\neq\partial\Omega_h$ in general.

Moreover, we consider standard $P_k$-finite elements on $\calT_h^\Omega$ for $u$, and their restriction to $\calT_\Omega|_h^\Gamma$ for $\lambda$. The choice of basis functions for $p$ is free, and can therefore focus on approximation properties, cf.~\cite{AltV21,AltZ24}. This leads to a (possibly nonconforming) approximation of $H^1(\Omega)$. 
% suitable lift operator is introduced in~\cite{Dzi88}
%Note that the restriction of the finite element space to the boundary yields a finite element for the approximation of~$p$.  

% FEM matrices
Following this approach yields mass matrices $M_\Omega\in \R^{N_\Omega,N_\Omega}$ and $M_\Gamma\in \R^{N_\Gamma,N_\Gamma}$ as discrete versions of the respective $L^2$-inner products. Here, $N_\Omega$ and $N_\Gamma$ denote the number of degrees of freedom for the $u$ and $p$ variable, respectively. As usual, mass matrices are symmetric and positive definite. 
The corresponding stiffness matrices are denoted by $A_\Omega\in \R^{N_\Omega,N_\Omega}$ and $A_\Gamma\in \R^{N_\Gamma,N_\Gamma}$. As discrete versions of the operators~$\calK_\Omega$ and $\calK_\Gamma$, respectively, these matrices are symmetric and positive semidefinite.  
The constraint operator~$\calB$ results in a rectangular matrix $B=[B_\Omega,B_\Gamma]\in \R^{N_\lambda, N_\Omega+N_\Gamma}$ in which $B_\Omega$ has full row-rank. Assuming a specific ordering of the basis functions, one can bring $B_\Omega$ to the form $B_\Omega = [0,-M_\lambda]$ with $M_\lambda\in\R^{N_\lambda,N_\lambda}$ symmetric and positive definite.

% semi-discrete system
Finally, we state the resulting semi-discrete system. Based on the model problem~\eqref{eq:kineticBC} and its abstract formulation as coupled system in~\eqref{eq:kinetic:PDAE}, we seek~$u\colon[0,T] \to \R^{N_\Omega}$, $p\colon[0,T] \to \R^{N_\Gamma}$, and a Lagrange multiplier~$\lambda\colon[0,T] \to \R^{N_\lambda}$ such that 
\begin{subequations}
	\label{eq:semidisc:kinetic}
	\begin{align}
		\begin{bmatrix} M_\Omega &  \\  & M_\Gamma \end{bmatrix}
		\begin{bmatrix} \ddot u \\ \ddot p  \end{bmatrix}
		+ \begin{bmatrix} A_\Omega &  \\  & A_\Gamma \end{bmatrix}
		\begin{bmatrix} u \\ p  \end{bmatrix}
		+ B^\top \lambda 
		&= \begin{bmatrix} f_\Omega(t,u) \\ f_\Gamma(t,p) \end{bmatrix}, \label{eq:semidisc:kinetic:a} \\%[1mm]
		\begin{bmatrix} 0 & M_\lambda \end{bmatrix} u - B_\Gamma p 
		&= 0, \label{eq:semidisc:kinetic:b}
	\end{align}
\end{subequations}
or, more concisely, 
\begin{subequations}\label{eq:semidiscconcise}
	\begin{align}
		M\ddot z + Az + B^\top\lambda &= f(t,z), \\
		Bz &= 0,
	\end{align}
\end{subequations}
where $z=(u,p)$, $M=\diag(M_\Omega,M_\Gamma)$, $A=\diag(A_\Omega,A_\Gamma)$, and $f=(f_1,f_2)$.
Furthermore, the structure of $B_\Omega$ induces a natural decomposition $u=(u_1,u_2)$. Therefore, we can write equivalently
\begin{subequations}
	\label{eq:semidiscsplit:kinetic}
	\begin{align}
		\begin{bmatrix} M_{11} & M_{12} \\ M_{21} & M_{22} \end{bmatrix}
		\begin{bmatrix} \ddot u_1 \\ \ddot u_2  \end{bmatrix}
		+ \begin{bmatrix} A_{11} & A_{12} \\ A_{21} & A_{22} \end{bmatrix}
		\begin{bmatrix} u_1 \\ u_2  \end{bmatrix}
		+ \bmat{0 \\ -M_\lambda} \lambda 
		&= \begin{bmatrix} f_1(t,u_1,u_2) \\ f_2(t,u_1,u_2) \end{bmatrix}, \label{eq:semidiscsplit:kinetic:bulk} \\%
		M_\Gamma\ddot p + A_\Gamma p + B_\Gamma^\top\lambda &= f_\Gamma(t,p), \label{eq:semidiscsplit:kinetic:surf} \\
		M_\lambda u_2 &= B_\Gamma p. \label{eq:semidiscsplit:kinetic:alg}
	\end{align}
\end{subequations}
In the upcoming section, we introduce a splitting scheme based on this formulation. 
\begin{remark}
Due to the full-rank property of~$B$, system~\eqref{eq:semidisc:kinetic} -- and hence also system~\eqref{eq:semidiscsplit:kinetic} -- equals a differential--algebraic equation (DAE) of index 3, cf.~\cite[Ch.~VII.1]{HaiW96}. 
\end{remark}
\begin{remark}\label{rem:semidiscExtModel}
	The same paradigm can be applied to construct a semi-discretization for the extended model \eqref{eq:extPDE}. In particular, by applying the same bulk--surface finite element method to the abstract formulation \eqref{eq:extPDE_abstract}, we obtain a DAE of the form
	\begin{subequations}\label{eq:semidisc_ext}
		\begin{align}
			\begin{bmatrix} M_\Omega &  \\  & M_\Gamma \end{bmatrix}
			\begin{bmatrix} \ddot u \\ \ddot p  \end{bmatrix}
			+ \bmat{D_\Omega & \\ & D_\Gamma}
			\bmat{\dot u \\ \dot p}
			+ \begin{bmatrix} A_\Omega &  \\  & A_\Gamma \end{bmatrix}
			\begin{bmatrix} u \\ p  \end{bmatrix}
			+ B^\top \nu 
			&= \begin{bmatrix} f_\Omega(t,u) \\ f_\Gamma(t,p) \end{bmatrix}, \\%[1mm]
			\begin{bmatrix} 0 & M_\lambda \end{bmatrix} u - B_\Gamma p 
			&= 0,
		\end{align}
	\end{subequations}
	where $D_\Omega\in\R^{N_\Omega,N_\Omega}$ with $D_\Omega+D_\Omega^\top\geq 0$ and $D_\Gamma\in\R^{N_\Gamma,N_\Gamma}$ with $D_\Gamma=D_\Gamma^\top$.
	Furthermore, we assume $D_\Gamma+D_\Gamma^\top\geq 0$, although its fulfillment might depend on the choice of finite elements for $\Gamma$; see the discussion in \Cref{app:semidiscExtModel}.
	Note that the splitting of the system into bulk and surface variables is retained.
\end{remark}
%
%======================================================================
%=========  Delay, Preliminaries
%======================================================================
\section{Bulk--surface Splitting}\label{sect:schemesKinetic}
This section is devoted to the introduction of bulk--surface
splitting schemes of second order. For this, we first establish some useful difference formulae. 
%
% 
%======================================================================
\subsection{Finite difference formulae}
In the following, we make use of the following second-order \emph{backward difference formulae} ($\bdf$) to approximate derivatives, 
\begin{align*}
	\dot u 
	&\approx \bdf_1^\tau u 
	\coloneqq \tfrac{1}{\tau} \big(E_\tau+\tfrac{1}{2}E_\tau^2 \big) u
	= \tfrac{1}{\tau}\big(\tfrac{3}{2}u-2u_\tau+\tfrac{1}{2}u_{2\tau}\big), \\
	\ddot u 
	&\approx \bdf_2^\tau u 
	\coloneqq \tfrac{1}{\tau^2} \big(E_\tau^2+E_\tau^3\big) u
	= \tfrac{1}{\tau} \big(2u-5u_\tau+4u_{2\tau}-u_{3\tau}\big),
\end{align*}
where $\tau>0$ is a fixed time step, $u_s$ for $s\in\R$ denotes the shifted function $u_s(t)=u(t-s)$, and $E_\tau$ denotes the finite difference operator such that $E_\tau u=u-u_\tau$.
Furthermore, we introduce the following \emph{delay difference formulae} ($\ddf$) to approximate function values and their derivatives, 
\begin{align*}
	u 
	&\approx
	\ddf_0^\tau u 
	\coloneqq \big(I-E_\tau^4\big) u
	= 4u_\tau-6u_{2\tau}+4u_{3\tau}-u_{4\tau}, \\
	\dot u 
	&\approx \ddf_1^\tau u 
	\coloneqq \tfrac{1}{\tau} \big(E_\tau+\tfrac{1}{2}E_\tau^2-\tfrac{3}{2}E_\tau^4\big) u
%	= \frac{1}{\tau}(E+\tfrac{3}{2}E^2+\tfrac{3}{2}E^3)u_\tau
	= \tfrac{1}{\tau} \big(4u_\tau-\tfrac{17}{2}u_{2\tau}+6u_{3\tau}-\tfrac{3}{2}u_{4\tau} \big), \\
	\ddot u
	&\approx \ddf_2^\tau u
	\coloneqq \tfrac{1}{\tau^2} \big(E_\tau^2+E_\tau^3-2E_\tau^4\big) u
%	= \tfrac{1}{\tau^2}(E^2+2E^3)u_\tau
	= \tfrac{1}{\tau^2}\big(3u_\tau-8u_{2\tau}+7u_{3\tau}-2u_{4\tau}\big)
	.
\end{align*}
Note that the $\ddf$ approximate $u,\dot u,\ddot u$ using only past values and satisfy
\begin{equation}\label{eq:bdf-ddf}
	\bdf_k^\tau-\ddf_k^\tau
	= \frac{k+2}{2\tau^k}\, E_\tau^4
\end{equation}
for $k=0,1,2$ by construction (where trivially $\bdf_0^\tau u\coloneqq u$). In particular, the $\ddf$ inherit from the $\bdf$ the property of being second-order approximations (with $\ddf_0$ actually being fourth order); see also \Cref{lem:BDF1_L2_error,lem:BDF2_L2_error} for details.

From now on, we omit the index~$\tau$ from the notation of the difference formulae and of the difference operator $E$, when it is clear from the context.
The same notation will also be applied to discrete sequences in the following sense: given vectors $u^n$ for $n\geq 0$, we denote $Eu^n=u^n-u^{n-1}$ and define the corresponding $\bdf$ and $\ddf$ formulae analogously. 
%
%
%======================================================================
\subsection{A splitting scheme of second order} % implicit--implicit

To define a time-stepping scheme for \eqref{eq:semidiscsplit:kinetic}, let us introduce for a uniform time step $\tau=\tfrac{T}{N}>0$, a time grid $t^n=n\tau$, and discrete states $u^n=(u_1^n,u_2^n)\in\R^{N_\Omega}$, $p^n\in\R^{N_\Gamma}$, and $\lambda^n\in\R^{N_\lambda}$ for $n=0,1,\ldots,N$. 
Furthermore, we introduce possibly different approximations for the discrete states and its derivatives in the first and second block equations of \eqref{eq:semidiscsplit:kinetic:bulk} as well as in the argument of the inhomogeneities. We denote these different approximations by additional indices, namely 
\begin{alignat*}{3}
	u_a^n &= (u_{a1}^n,u_{a2}^n), \qquad&
	\ddot u_a^n &= (\ddot u_{a1}^n,\ddot u_{a2}^n), \qquad&
	u_{fa}^n &= (u_{fa1}^n,u_{fa2}^n), \\
	u_b^n &= (u_{b1}^n,u_{b2}^n), \qquad&
	\ddot u_b^n &= (\ddot u_{b1}^n,\ddot u_{b2}^n), \qquad&
	u_{fb}^n &= (u_{fb1}^n,u_{fb2}^n),
\end{alignat*}
and $p_c^n$, $\ddot p_c^n$, $p_{fc}^n$. 
In particular, we derive from \eqref{eq:semidiscsplit:kinetic:bulk} and \eqref{eq:semidiscsplit:kinetic:surf} the discrete equations
\begin{subequations}\label{eq:discrete_gen}
	\begin{align}
		M_{11}\ddot u_{a1}^n + M_{12}\ddot u_{a2}^n + A_{11}u_{a1}^n + A_{12}u_{a2}^n &= f_1(t^n,u_{fa}^n), \label{eq:discrete_gen:bulk} \\
		M_{21}\ddot u_{b1}^n + M_{22}\ddot u_{b2}^n + A_{21}u_{b1}^n + A_{22}u_{b2}^n - M_\lambda\lambda^n &= f_2(t^n,u_{fb}^n), \label{eq:discrete_gen:lagrange} \\
%		\bmat{M_{11} & M_{12} \\ M_{21} & M_{22}}\bmat{\ddot u_1^n \\ \ddot u_2^n}
%		+ \bmat{A_{11} & A_{12} \\ A_{21}& A_{22}}\bmat{u_1^n \\ u_2^n}
%		+ \bmat{0 \\ -M_\lambda}\lambda^n &= \bmat{f_1^n \\ f_2^n}, \label{eq:discrete_gen:bulk} \\
		%
		M_\Gamma\ddot p_c^n + A_\Gamma p_c^n + B_\Gamma^\top\lambda^n &= f_\Gamma(t^n,p_c^n). \label{eq:discrete_gen:surf}
	\end{align}
\end{subequations}
The algebraic constraint \eqref{eq:semidiscsplit:kinetic:alg} can be interpreted as $u_2=Pp$, where $P=M_\lambda^{-1}B_\Gamma$. From this, also $\ddot u_2=P\ddot p$ follows. To enforce the decoupling of the bulk and surface variables, we approximate $u_2$ and $\ddot u_2$ using only past values of the system, more precisely  
\begin{equation}
	u_2^n = u_{a2}^n = u_{b2}^n = u_{fa2}^n = u_{fb2}^n 
	\coloneqq P \ddf_0 p^n, \qquad
	\ddot u_{a2}^n = \ddot u_{b2}^n 
	\coloneqq P\ddf_2 p^n.
\end{equation}
To allow the system to be solved uniquely for $u_1^n$, $p^n$ and have it implicit in these two variables, we set
\begin{equation}
	u_{a1}^n = u_1^n, \qquad p_c^n = p^n, \qquad \ddot u_{a1}^n = \bdf_2u_1^n, \qquad \ddot p_c^n = \bdf_2p^n.
\end{equation}
To further decouple the system, we also define $u_{b1}^n$, $\ddot u_{b1}^n$, and $u_{fb1}^n$ using past values, i.e.,
\begin{equation}
	u_{b1}^n = u_{fb1}^n 
	\coloneqq \ddf_0 u_1^n, \qquad
	\ddot u_{b1}^n 
	\coloneqq \ddf_2 u_1^n.
\end{equation}
Finally, to avoid having to solve a nonlinear system at every step, we approximate the remaining arguments of $f$ using again only past values, i.e.,
\begin{equation}
	u_{fa1}^n 
	\coloneqq \ddf_0 u_1^n, \qquad 
	p_{fc}^n 
	\coloneqq \ddf_0 p^n.
\end{equation}

Since the resulting discrete system involves values of the discrete states up to four steps in the past, we write it more precisely as 
\begin{subequations}\label{eq:multistep}
	\begin{align}\setlength\arraycolsep{2pt}
		(M_{11}\bdf_2+A_{11})u_1^{n+4} + (M_{12}\ddf_2+A_{12}\ddf_0)Pp^{n+4} &= f_1^{n+4}, \label{eq:multistep:bulk} \\
		u_2^{n+4} - \ddf_0 Pp^{n+4} &= 0, \label{eq:multistep:alg} \\
		(M_{21}\ddf_2+A_{21}\ddf_0)u_1^{n+4} + (M_{22}\ddf_2+A_{22}\ddf_0)Pp^{n+4} - M_\lambda\lambda^{n+4} &= f_2^{n+4}, \label{eq:multistep:lagrange} \\
		(M_\Gamma\bdf_2 + A_\Gamma)p^{n+4} + B_\Gamma^\top\lambda^{n+4} &= f_\Gamma^{n+4}, \label{eq:multistep:surf}
	\end{align}
\end{subequations}
for all $n\geq 0$, where we introduce the concise notation
\begin{subequations}\label{eq:f_eval}
\begin{align}
	\bsmat{f_1^n \\ f_2^n} = f_\Omega^n &\coloneqq f_\Omega(t^n, \ddf_0 u_1^n, P\ddf_0 p^n), \\
	f_\Gamma^n &\coloneqq f_\Gamma(t^n, \ddf_0p^n).
\end{align}
\end{subequations}

In particular, given previous values $u_1^n,\ldots,u_1^{n+3}$ and $p^n,\ldots,p^{n+3}$, equations \eqref{eq:multistep:bulk} and \eqref{eq:multistep:alg}, and the pair of equations \eqref{eq:multistep:lagrange}, \eqref{eq:multistep:surf} can be solved independently for $(u_1^{n+4}, u_2^{n+4})$ and $(\lambda^{n+4},p^{n+4})$, respectively.
In fact, the Lagrange multiplier $\lambda^{n+4}$ can be obtained in \eqref{eq:multistep:lagrange} and then inserted in \eqref{eq:multistep:surf}, yielding
\begin{multline}\label{eq:multistep:surf_pure}
	M_\Gamma\bdf_2p^{n+4} + A_\Gamma p^{n+4} + P^\top\big( (M_{21}\ddf_2+A_{21}\ddf_0)u_1^{n+4} \\ + (M_{22}\ddf_2+A_{22}\ddf_0)Pp^{n+4} - f_2^{n+4} \big) = f_\Gamma^{n+4},
\end{multline}
which can be solved for $p^{n+4}$ using only past values of $u_1$ and $p$.
In particular, the equations \eqref{eq:multistep:bulk}, \eqref{eq:multistep:alg}, and \eqref{eq:multistep:surf_pure} can be solved in parallel for $u_1^{n+4}$, $u_2^{n+4}$, and $p^{n+4}$, respectively, obtaining a fully decoupled method.

For the analysis of the time-stepping method, it is useful to introduce the auxiliary state $\hat u_2^n \coloneqq Pp^n$ for all $n\geq 0$. This state satisfies, in particular, 
\[
	M_\lambda\hat u_2^n=B_\Gamma p^n,\qquad 
	u_2^{n+4}=\ddf_0\hat u_2^{n+4},\qquad
	\ddot u_2^{n+4}=\ddf_2\hat u_2^{n+4}
\]
for all $n\geq 0$.
By defining the auxiliary bulk and bulk--surface states 
\begin{align}
	\label{eq:defAuxStates}
	\hat u^n=(u_1^n,\hat u_2^n), \qquad
	\hat z^n=(\hat u^n,p^n)
\end{align}
and the combined inhomogeneity $f^{n+4}=(f_\Omega^{n+4},f_\Gamma^{n+4})$, an application of formula~\eqref{eq:bdf-ddf} shows that we can rewrite the discrete system \eqref{eq:multistep} equivalently as
\begin{subequations}\label{eq:multistep_aux}
	\begin{align}
		M\bdf_2\hat z^{n+4} + A\hat z^{n+4} + B^\top\lambda^{n+4} &= f^{n+4} + E^4\pset*{ \tfrac{2}{\tau^2}\wt M + \wt A }\hat z^{n+4}, \label{eq:multistep_aux:1} \\
		B\hat z^{n+4} &= 0,
	\end{align}
\end{subequations}
where
\begin{subequations}
	\begin{alignat}{2}
		\wt M &\coloneqq \bmat{\wt M_\Omega & 0 \\ 0 & 0}, &\qquad
		\wt M_\Omega &\coloneqq \bmat{0 & M_{12} \\ M_{21} & M_{22}}, \\
		\wt A &\coloneqq \bmat{\wt A_\Omega & 0 \\ 0 & 0}, &\qquad
		\wt A_\Omega &\coloneqq \bmat{0 & A_{12} \\ A_{21} & A_{22}}
	\end{alignat}
\end{subequations}
are symmetric (and possibly indefinite) matrices.
In particular, \eqref{eq:multistep_aux} can be interpreted as a perturbation of the standard $\bdf$-2 method applied to~\eqref{eq:semidiscconcise}.  

As we will prove in the next section, this splitting method has good stability and convergence properties. In particular, up to the discretization parameters $\tau,h$ satisfying a weak CFL condition of the form~$\tau\lesssim\sqrt{h}$, the scheme has convergence order two.
%
%
%======================================================================
\subsection{Alternative splitting schemes}\label{sec:alternativeSplitting}
We shortly discuss alternative splitting approaches. However, since corresponding convergence results can be proven with similar techniques but the individual proofs are quite long, we will focus on the analysis of \eqref{eq:multistep} in the remainder of the paper.

%\begin{remark}
Generally speaking, one can construct alternative splitting schemes by making different choices for the discrete approximations in \eqref{eq:discrete_gen}.
For example, choosing $u_{b1}^n=u_1^n$ instead of using past values would lead to a similar method with comparable stability and convergence properties. This scheme, however is not parallelizable. 
Another possibility would be to replace $\ddf_0$ with a delay formula which uses less terms, e.g.,~$I-E^3$ or $I-E^2$. However, in both cases the resulting stability results are weaker, while second-order convergence is retained.
%	
%	Furthermore, by eliminating the Lagrange multiplier in \eqref{eq:discrete_gen} by writing the subsystem
%	%
%	\begin{subequations}
%		\begin{align}
%			& M_{11}\ddot u_{a1}^n + M_{12}\ddot u_{a2}^n + A_{11}u_{a1}^n + A_{12}u_{a2}^n = f_1(t^n,u_{fa}^n), \\
%			& \begin{multlined}[t]PM_{21}\ddot u_{b1}^n + PM_{22}\ddot u_{b2}^n + M_\Gamma\ddot p_c^n \\ + PA_{21}u_{b1}^n + PA_{22}u_{b2}^n + A_{22}p_c^n = Pf_2(t^n,u_{fb}^n) + f_\Gamma(t^n,p_c^n),\end{multlined}
%%			PM_{21}\ddot u_{b1}^n + PM_{22}\ddot u_{b2}^n + M_\Gamma\ddot p_c^n + PA_{21}u_{b1}^n + PA_{22}u_{b2}^n + A_{22}p_c^n &= Pf_2(t^n,u_{fb}^n) + f_\Gamma(t^n,p_c^n),
%		\end{align}
%	\end{subequations}
%	%
%	we observe that choosing $\ddot u_{b2}^n=P\bdf_2 p^n$ and $u_{b2}^n=Pp^n$, while still using delay formulae for $\ddot u_{a2}^n,u_{a2}^n,\ddot u_{b1}^n,u_{b2}^n$, also allows us to decouple $u_1^n$ and $p^n$. This method is still parallelizable and has analogous stability and convergence properties as the presented one.
%\end{remark}

Finally, we remark that the presented method can also be applied to the extended model \eqref{eq:semidisc_ext}. 
%\begin{remark}
%The same approach used to discretize \eqref{eq:semidisc:kinetic} can also be applied to the extended model \eqref{eq:semidisc_ext}. 
In particular, the occurrences of $\dot u_1,\dot u_2,\dot p$ in the equations will be approximated with $\bdf_1$ and $\ddf_1$, with possibly different choices in different parts of the equation. In fact, if we denote analogously as before by $\dot u_a^n = (\dot u_{a1}^n,\dot u_{a2}^n)$, $\dot u_b^n = (\dot u_{b1}^n,\dot u_{b2}^n)$, and $\dot p_c^n$ the terms appearing in the three block equations, it is natural to choose
\begin{equation}
	\dot u_{a1}^n = \bdf_1u_1^n, \quad
	\dot u_{a2}^n = \dot u_{b2}^n = \ddf_1u_2^n, \quad
	\dot u_{b1}^n = \ddf_1u_1^n, \quad
	\dot p_c^n = \bdf_1p^n.
\end{equation}
When $\delta_\Omega,\gamma_\Omega=0$, the same stability and convergence properties hold, with minimal alterations to the following proofs.
For $\delta_\Omega>0$, the stricter CFL condition $\tau\lesssim h$ is required.
When $\gamma_\Omega\neq 0$, a longer formula for $\ddf_1$ (e.g.~$E+\tfrac{3}{2}E^2-\tfrac{3}{2}E^5$) is necessary to adapt the stability and convergence proofs, although numerical simulations seem to show that the provided formula for $\ddf_1$ still leads to comparable results.
\section{Stability and Convergence Analysis}

In this section, we prove stability and convergence of the splitting scheme~\eqref{eq:multistep}. Due to the length of the complete proof, we split it in multiple intermediate steps. 
As before, the time interval $[0,T]$ with $T>0$ is decomposed into $N$ subintervals with equal length $\tau=\tfrac{T}{N}$, so that $z^n\approx z(t^n)$ with $t^n=n\tau$, $n=0,1,\ldots,N$. 

Suppose we want to compute an approximate solution for \eqref{eq:semidisc:kinetic} using the $4$-step splitting scheme~\eqref{eq:multistep}. We assume consistent initial data $u^n$, $p^n$ for $n=0,\ldots,3$, i.e., values which satisfy the algebraic constraint $M_\lambda u_2^n=B_\Gamma^\top p^n$ (or equivalently $\hat z^n=z^n$).  
Furthermore, we assume that the nonlinearity $f$ is split into
\begin{equation}
	f = Mf_M + \wt Mf_{\wt M} + \wt Af_{\wt A},
	\qquad
	f_{\wt M} = \bmat{f_{\wt M,\Omega} \\ f_{\wt M,\Gamma}}, \qquad
	f_{\wt A} = \bmat{f_{\wt A,\Omega} \\ f_{\wt A,\Gamma}},
%	f_A = \bmat{f_{A,\Omega} \\ f_{A,\Gamma}},
\end{equation}
where the bulk components themselves are decomposed into $f_{\wt M,\Omega}=(f_{\wt M,1},f_{\wt M,2})$ and $f_{\wt A,\Omega}=(f_{\wt A,1},f_{\wt A,2})$ with $f_{\wt M,2}=Pf_{\wt M,\Gamma}$ and $f_{\wt A,2}=Pf_{\wt A,\Gamma}$. 
Note that the split is artificial in the sense that one may choose~$f_M=M^{-1}f$ and $f_{\wt M}, f_{\wt A}=0$. It will, however, be helpful for the upcoming convergence analysis of the method.
% While the term $f_M$ naturally derives from the semi-discretization of the inhomogeneity, the terms $f_{\wt M}$ and $f_{\wt A}$ are artificial 
Finally, we denote sequences of function evaluations by $f_M^n$ and so on as in~\eqref{eq:f_eval}.

In the following proofs, we denote by $C$ a generic positive real constant, independent of the discretization parameters $h$, $\tau$ and the iteration counter (usually denoted by $n$ or $m$).
Hence, $C$ may denote different constants with the understanding that we replace them with a larger one whenever necessary.
Alternatively, we sometimes use the notion $\mathrm{LHS}\lesssim\mathrm{RHS}$ instead of $\mathrm{LHS}\leq C\,\mathrm{RHS}$.

Furthermore, we will apply the following inequalities and equations without referring to them explicitly:
\begin{enumerate}[itemsep=5pt,topsep=5pt]
%	\item $\norm{E^k\hat u_2^{n}}_{M_{22}}^2 \lesssim h\norm{E^kp^{n}}_{M_\Gamma}^2$ and
%	$\norm{E^k\hat u_2^{n}}_{A_{22}}^2 \lesssim \tfrac{1}{h}\norm{E^kp^{n}}_{M_\Gamma}^2$
%	for all $0\leq k\leq n$, due to \eqref{eq:matrixAssumptions}.
	%
	\item $\aset{x,y} \leq \lambda_1\norm{x}^2 + \lambda_2\norm{y}^2$ for all vectors $x,y$ and $\lambda_1,\lambda_2>0$ such $\lambda_1\lambda_2\geq\tfrac{1}{4}$, due to Young's inequality.
	In particular, $\aset{x,y}\lesssim f(\tau,h)\norm{x}^2+f(\tau,h)^{-1}\norm{y}^2$ for any positive function $f$ of $\tau$ and $h$.
	\item $\sqrt{\norm{x^1}^2+\ldots+\norm{x^k}^2}\leq\norm{x^1}+\ldots+\norm{x^k} \leq \sqrt{k}\, (\norm{x^1}^2+\ldots+\norm{x^k}^2)^{\frac{1}{2}}$
	for all vectors $x^1,\ldots,x^k$, where the second bound follows from the Cauchy--Schwarz inequality. In particular,
	$\norm{x^1+\ldots+x^k}\lesssim (\norm{x^1}^2+\ldots+\norm{x^k}^2)^{\frac{1}{2}}$ whenever $k$ is an integer independent of the discretization parameters and the iteration counter.
	%
%	\item $\norm{\tfrac{1}{\tau}E^k\hat z^n}_M^2
%	=\norm{\tfrac{1}{\tau}\tsum_{\ell=0}^{k-1}\binom{k-1}{\ell}(-1)^\ell E\hat z^{n-\ell}}_M^2
%	\lesssim H_0 + \norm{\tfrac{1}{\tau}E\hat z^4}_M^2 + \ldots + \norm{\tfrac{1}{\tau}E\hat z^n}_M^2$ for all $n\geq 1$ and $1\leq k\leq n$. Analogous bounds hold for $\norm{\tfrac{1}{\tau}E^k\hat u^n}_{M_\Omega}^2$ and $\norm{\tfrac{1}{\tau}E^k p^n}_{M_\Gamma}^2$, $\norm{E^{k-1}\hat z^{n-1}}_A^2$, $\norm{E^{k-1}\hat u^{n-1}}_{A_\Omega}^2$, and $\norm{E^{k-1}p^{n-1}}_{A_\Gamma}^2$.
	%
	\item The discrete integration by parts formulae
	\begin{subequations}\label{eq:discreteIntegrationByParts}
	\begin{align}
		\aset{u^{n+1}, Ev^{n+1}}
		&= E\aset{u^{n+1},v^{n+1}} - \aset{Eu^{n+1}, v^n} \label{eq:discreteIntegrationByParts:a}\\
		&= E\aset{u^{n+1},v^{n+1}} - \aset{Eu^{n+1}, v^{n+1}} + \aset{Eu^{n+1},Ev^{n+1}}, \label{eq:discreteIntegrationByParts:b}\\
		\aset{u^{n+1} , Eu^{n+1}} 
		&= \tfrac{1}{2}E(\norm{u^{n+1}}^2) + \tfrac{1}{2}\norm{Eu^{n+1}}^2 \label{eq:discreteIntegrationByParts:c}
	\end{align}
	\end{subequations} 
	for all sequences of vectors $u^n,v^n$.
	\item We assume sufficiently small mesh and step sizes. In particular, we have $h,\tau \lesssim 1$. 
\end{enumerate}
We will need the following technical lemma, which holds true for the choice of $P_k$-finite elements for $u$ and their restriction to the boundary for $\lambda$, while it is independent of the basis choice for $p$.
\begin{lemma}[see~{\cite[Lem.~4.3]{AltZ24}}]
	\label{lem:Christoph}
	There exist constants $c_M,c_A>0$, only depending on the system parameters and the uniformity parameter of the underlying mesh $\calT_h^\Omega$, but not on $h$, such that
	\begin{equation*}
		\norm{u_2}_{M_{22}}^2 
		\leq c_Mh\, \norm{p}_{M_\Gamma}^2, \qquad
		\norm{u_2}_{A_{22}}^2 
		\leq c_Ah^{-1}\norm{p}_{M_\Gamma}^2,
	\end{equation*}
	for all $u=(u_1,u_2)\in\R^{N_\Omega}$ and $p\in\R^{N_\Gamma}$ satisfying $M_\lambda u_2=B_\Gamma p$.
\end{lemma}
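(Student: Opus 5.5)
The plan is to realise $u_2$ as the coefficient vector of a finite element function living only on the boundary layer of $\calT_h^\Omega$, and to compare all three norms with the $L^2(\Gamma)$ norm of its trace. Concretely, let $u_2$ collect the coefficients of the nodal basis functions $\phi_j$ attached to the boundary vertices (and boundary degrees of freedom for $P_k$). Set $v_h := \sum_j (u_2)_j \phi_j$, extended by zero in the interior nodes. Then $\norm{u_2}_{M_{22}}^2 = \norm{v_h}_{L^2(\Omega_h)}^2$ and $\norm{u_2}_{A_{22}}^2 = \norm{\nabla v_h}_{L^2(\Omega_h)}^2$. The support of $v_h$ is contained in the layer $S_h$ of elements touching $\Gamma_h$, which has width $\mathcal{O}(h)$.

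\textbf{Step 1 (bulk layer vs.\ trace).} By quasi-uniformity and a scaling argument on each element $K \subset S_h$, I would show
\[
	\norm{v_h}_{L^2(K)}^2 \lesssim h\, \norm{v_h}_{L^2(\partial K\cap\Gamma_h)}^2 .
\]
This holds because $v_h$ is determined by its boundary degrees of freedom, which on the reference element are controlled by the trace norm on the boundary face; norm equivalence in finite dimensions then applies. Elements touching $\Gamma_h$ only at a vertex or an edge are handled by using neighbouring boundary faces, with the number of neighbours bounded by shape regularity. Combined with an inverse inequality $\norm{\nabla v_h}_{L^2(K)}^2 \lesssim h^{-2} \norm{v_h}_{L^2(K)}^2$ and summation over $K$, this gives
\[
	\norm{u_2}_{M_{22}}^2 \lesssim h \norm{\tr v_h}^2_{L^2(\Gamma_h)}, \qquad
	\norm{u_2}_{A_{22}}^2 \lesssim h^{-1} \norm{\tr v_h}^2_{L^2(\Gamma_h)}.
\]

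\textbf{Step 2 (use the constraint).} $M_\lambda u_2 = B_\Gamma p$ states that $\tr v_h$ is the $L^2$-projection of (the lifted) $p$ onto the trace space $\calT_h^\Omega|_\Gamma$, since $M_\lambda$ is exactly the boundary mass matrix of that trace space. Hence
\[
	\norm{\tr v_h}_{L^2(\Gamma_h)}^2 = \norm{u_2}_{M_\lambda}^2 = \aset{u_2, B_\Gamma p} \le \norm{u_2}_{M_\lambda}\,\norm{p}_{L^2},
\]
so $\norm{u_2}_{M_\lambda} \le C\norm{p}_{M_\Gamma}$. This uses that $\calT_h^\Gamma$ refines $\calT_h^\Omega|_\Gamma$ but is otherwise independent of the basis for $p$.

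\textbf{Main obstacle.} The main difficulty is the geometric mismatch $\Gamma_h \neq \partial\Omega_h$ together with the lifts. The entries of $B_\Gamma$ are integrals over $\Gamma_h$ (or $\Gamma$) of products of functions defined on different discrete surfaces. To control this, I would invoke lift estimates in the style of Dziuk, which give $L^2$ norms equivalent up to $1 + \mathcal{O}(h^2)$ between $\Gamma_h$, $\partial\Omega_h$ and $\Gamma$. These estimates make the Cauchy--Schwarz step above valid with an $h$-independent constant, and combining Steps 1 and 2 then yields the claim.
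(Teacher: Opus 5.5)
Your argument is correct. It is essentially the argument behind the cited \cite[Lem.~4.3]{AltZ24}; the paper invokes that result without reproducing a proof, but recalls its key ingredient in the proof of \Cref{lem:M_to_A_bound}, namely that $M_\lambda u_2=B_\Gamma p$ makes $\tr v_h$ the $L^2$-best approximation of $p_h$ in the trace space. Stability of that projection gives $\norm{u_2}_{M_\lambda}\lesssim\norm{p}_{M_\Gamma}$, and the boundary-layer scaling $M_{22}\lesssim h\,M_\lambda$ together with an inverse estimate gives both bounds.

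One fix is needed in Step 1. The faces of $K$ lie on $\partial\Omega_h$, not on $\Gamma_h$ (and $\Gamma_h\neq\partial\Omega_h$ in general), so the local estimate should read $\norm{v_h}_{L^2(K)}^2\lesssim h\,\norm{v_h}_{L^2(\partial K\cap\partial\Omega_h)}^2$. The passage between $\partial\Omega_h$, $\Gamma_h$ and $\Gamma$ should then be made only in Step 2, via the lift equivalences you describe.
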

We further observe that the discrete integration by parts formulae~\eqref{eq:discreteIntegrationByParts} hold more general for symmetric bilinear forms (not necessarily definite or semidefinite), up to replacing $\norm{\,\cdot\,}^2$ with $\aset{\,\cdot\,,\cdot\,}$. With that in mind, we deduce the following bounds.
\begin{lemma}\label{lem:fakescalar_bound}
	Let $z=(u,p),\ y=(v,q)\in\R^{N_\Omega+N_\Gamma}$ with $u=(u_1,u_2),\ v=(v_1,v_2)$ and $M_\lambda u_2=B_\Gamma p$, $M_\lambda v_2=B_\Gamma q$. Further assume $\tau \lesssim \sqrt{h}$. Then we have the estimates 
	\begin{subequations}
		\begin{align}
			\abs[\big]{\aset{y,z}_{\wt M}}
			&= \abs[\big]{\aset{v,u}_{\wt M_\Omega}}
			\lesssim \sqrt{h}\, \norm{y}_M\norm{z}_M, \\
			\abs[\big]{\aset{y,z}_{\wt A}}
			&= \abs[\big]{\aset{v,u}_{\wt A_\Omega}}
			\lesssim \tfrac{\tau}{\sqrt h}\, \big(\norm{v}_{A_\Omega}+\norm{\tfrac{1}{\tau}q}_{M_\Gamma}\big)
			\big(\norm{u}_{A_\Omega} + \norm{\tfrac{1}{\tau}p}_{M_\Gamma}\big).
		\end{align}
	\end{subequations}
\end{lemma}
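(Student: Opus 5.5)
The plan is to expand both bilinear forms blockwise and bound each off-diagonal and $(2,2)$ block separately. The constrained coordinates $u_2,v_2$ are then estimated through \Cref{lem:Christoph}, while the unconstrained coordinates $u_1,v_1$ are absorbed into full $M_\Omega$- or $A_\Omega$-norms.

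The identities $\aset{y,z}_{\wt M}=\aset{v,u}_{\wt M_\Omega}$ and $\aset{y,z}_{\wt A}=\aset{v,u}_{\wt A_\Omega}$ are immediate, since the surface blocks of $\wt M$ and $\wt A$ vanish. Writing out $\wt M_\Omega$ gives
\[
	\aset{v,u}_{\wt M_\Omega} = v_1^\top M_{12}u_2 + v_2^\top M_{21}u_1 + v_2^\top M_{22}u_2 .
\]
The diagonal term is handled by the Cauchy--Schwarz inequality in the $M_{22}$ inner product together with \Cref{lem:Christoph}: $\abs{v_2^\top M_{22}u_2}\le\norm{v_2}_{M_{22}}\norm{u_2}_{M_{22}}\lesssim h\norm{q}_{M_\Gamma}\norm{p}_{M_\Gamma}$, which is at most $\sqrt h\,\norm{y}_M\norm{z}_M$ since $h\lesssim1$.

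For the off-diagonal terms I will use a Cauchy--Schwarz inequality for off-diagonal blocks of a positive (semi)definite block matrix. For $S=\bsmat{S_{11}&S_{12}\\S_{21}&S_{22}}\ge0$, one has $\abs{x_1^\top S_{12}x_2}\le\norm{x_1}_{S_{11}}\norm{x_2}_{S_{22}}$. This holds because the quadratic form of $S$ evaluated at $(s x_1,-x_2)$ is nonnegative for every real $s$, so its discriminant in $s$ is nonpositive. In addition, $\norm{v_1}_{M_{11}}\le\norm{v}_{M_\Omega}$, because $M_{11}$ is the restriction of $M_\Omega$ to the first coordinates. Combining these gives
\[
	\abs{v_1^\top M_{12}u_2}\le\norm{v}_{M_\Omega}\norm{u_2}_{M_{22}}\lesssim\sqrt h\,\norm{y}_M\norm{p}_{M_\Gamma}\le\sqrt h\,\norm{y}_M\norm{z}_M ,
\]
and the term $v_2^\top M_{21}u_1$ is treated symmetrically.

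For $\wt A$ I will use the same decomposition, now with $A_\Omega\ge0$, and \Cref{lem:Christoph} in the form $\norm{u_2}_{A_{22}}\lesssim h^{-1/2}\norm{p}_{M_\Gamma}=\tfrac{\tau}{\sqrt h}\norm{\tfrac1\tau p}_{M_\Gamma}$. The off-diagonal term is then bounded by
\[
	\abs{v_1^\top A_{12}u_2}\le\norm{v}_{A_\Omega}\norm{u_2}_{A_{22}}\lesssim\tfrac{\tau}{\sqrt h}\norm{v}_{A_\Omega}\norm{\tfrac1\tau p}_{M_\Gamma},
\]
and the other off-diagonal term symmetrically. The $(2,2)$ term gives
\[
	\norm{v_2}_{A_{22}}\norm{u_2}_{A_{22}}\lesssim h^{-1}\norm{q}_{M_\Gamma}\norm{p}_{M_\Gamma}=\tfrac{\tau^2}{h}\norm{\tfrac1\tau q}_{M_\Gamma}\norm{\tfrac1\tau p}_{M_\Gamma}.
\]
This is the only place the CFL assumption enters: $\tau\lesssim\sqrt h$ gives $\tau^2/h\lesssim\tau/\sqrt h$. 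Summing the three contributions, each bounded by the product $(\norm{v}_{A_\Omega}+\norm{\tfrac1\tau q}_{M_\Gamma})(\norm{u}_{A_\Omega}+\norm{\tfrac1\tau p}_{M_\Gamma})$ times $\tfrac{\tau}{\sqrt h}$, yields the claim.

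The main obstacle is the off-diagonal block inequality in the semidefinite case $A_\Omega\ge0$. The discriminant argument handles it directly, since it needs only $S\ge0$ and not definiteness, so no modification of the argument is required there. Everything else is bookkeeping with \Cref{lem:Christoph}.
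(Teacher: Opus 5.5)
Your proposal has a false step: you claim $\norm{v_1}_{M_{11}}\le\norm{v}_{M_\Omega}$ ``because $M_{11}$ is the restriction of $M_\Omega$ to the first coordinates.'' You use the same claim for $\norm{u_1}_{M_{11}}$, $\norm{v_1}_{A_{11}}$ and $\norm{u_1}_{A_{11}}$. Restriction only gives $\norm{v_1}_{M_{11}}=\norm{(v_1,0)}_{M_\Omega}$. It does not bound this by $\norm{(v_1,v_2)}_{M_\Omega}$, because the cross term $2v_1^\top M_{12}v_2$ can be negative. Minimizing over $v_2$ gives the Schur complement $v_1^\top(M_{11}-M_{12}M_{22}^{-1}M_{21})v_1$, which is strictly smaller whenever $M_{21}v_1\neq0$. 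For example, for the (scaled) one-dimensional $P_1$ element mass matrix $\bsmat{2&1\\1&2}$ and $v=(1,-\tfrac12)$, you get $\norm{v}^2=\tfrac32<2=\norm{v_1}_{M_{11}}^2$. So the off-diagonal bounds $\abs{v_1^\top M_{12}u_2}\le\norm{v}_{M_\Omega}\norm{u_2}_{M_{22}}$ and their $A_\Omega$ analogues are not justified as written. The off-diagonal block Cauchy--Schwarz inequality you flagged as the main obstacle is correct; the gap is the restriction claim.

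The repair is short. By the triangle inequality for the (semi)norm, $\norm{v_1}_{M_{11}}\le\norm{v}_{M_\Omega}+\norm{v_2}_{M_{22}}$ and $\norm{v_1}_{A_{11}}\le\norm{v}_{A_\Omega}+\norm{v_2}_{A_{22}}$. By \Cref{lem:Christoph}, the extra contributions are $\lesssim h\norm{q}_{M_\Gamma}\norm{p}_{M_\Gamma}$ and $\lesssim\tfrac{\tau^2}{h}\norm{\tfrac1\tau q}_{M_\Gamma}\norm{\tfrac1\tau p}_{M_\Gamma}$. These are exactly of the type of your $(2,2)$ terms and are absorbed the same way, via $h\lesssim1$ and $\tau\lesssim\sqrt h$ respectively. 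The paper avoids the issue altogether by writing $\aset{v,u}_{\wt M_\Omega}=\aset{(0,v_2),u}_{M_\Omega}+\aset{v,(0,u_2)}_{M_\Omega}-\aset{v_2,u_2}_{M_{22}}$, and likewise for $\wt A_\Omega$. Ordinary Cauchy--Schwarz in the full $M_\Omega$- or $A_\Omega$-product then gives $\norm{v_2}_{M_{22}}\norm{u}_{M_\Omega}$ and similar terms directly. With that decomposition the off-diagonal block inequality is not needed at all.
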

\begin{proof}
	For the first estimate, we apply \Cref{lem:Christoph} and obtain 
	\begin{align*}
		\abs[\big]{\aset{y,z}_{\wt M}}
		&= \abs[\big]{\aset{v,u}_{\wt M_\Omega}}
		= \abs[\big]{ \aset{\bsmat{0 \\ v_2},u}_{M_\Omega} + \aset{v,\bsmat{0 \\ u_2}}_{M_\Omega} - \aset{v_2,u_2}_{M_{22}} } \\
		&\leq \norm{v_2}_{M_{22}}\norm{u}_{M_\Omega} + \norm{v}_{M_\Omega}\norm{u_2}_{M_{22}} + \norm{v_2}_{M_{22}}\norm{u_2}_{M_{22}} \\
		&\lesssim \sqrt{h}\, \norm{q}_{M_\Gamma}\norm{u}_{M_\Omega} + \sqrt{h}\, \norm{v}_{M_\Omega}\norm{p}_{M_\Gamma} + h\, \norm{q}_{M_\Gamma}\norm{p}_{M_\Gamma}
		\lesssim \sqrt{h}\, \norm{y}_M\norm{z}_M.
	\end{align*}
	For the second estimate, we apply \Cref{lem:Christoph} once more, leading to 
	\begin{align*}
		\abs[\big]{\aset{y,z}_{\wt A}}
		&= \abs[\big]{\aset{v,u}_{\wt A_\Omega}}
		= \abs[\big]{ \aset{\bsmat{0 \\ v_2},u}_{A_\Omega} + \aset{v,\bsmat{0 \\ u_2}}_{A_\Omega} - \aset{v_2,u_2}_{A_{22}} } \\
		&\leq \norm{v_2}_{A_{22}}\norm{u}_{A_\Omega} + \norm{v}_{A_\Omega}\norm{u_2}_{A_{22}} + \norm{v_2}_{A_{22}}\norm{u_2}_{A_{22}} \\
		&\lesssim \tfrac{\tau}{\sqrt h}\norm{\tfrac{1}{\tau}q}_{M_\Gamma}\norm{u}_{A_\Omega} + \tfrac{\tau}{\sqrt h}\norm{v}_{A_\Omega}\norm{\tfrac{1}{\tau}p}_{M_\Gamma} + \tfrac{\tau^2}{h}\norm{\tfrac{1}{\tau}q}_{M_\Gamma}\norm{\tfrac{1}{\tau}p}_{M_\Gamma} \\
		&\lesssim \tfrac{\tau}{\sqrt h} \big(\norm{v}_{A_\Omega}+\norm{\tfrac{1}{\tau}q}_{M_\Gamma}\big)
		\big(\norm{u}_{A_\Omega} + \norm{\tfrac{1}{\tau}p}_{M_\Gamma}\big),
	\end{align*}
	where the second-order term disappears due to $\tau \lesssim \sqrt{h}$.
\end{proof}
\begin{lemma}\label{lem:fakenorm_bound2}
	Let $z^n=(u^n,p^n)\in\R^{N_\Omega+N_\Gamma}$ for $n\geq 0$ with $u^n=(u_1^n,u_2^n)$ and $M_\lambda u_2^n=B_\Gamma p^n$. Under the assumption $\tau \lesssim \sqrt{h}$, we obtain the inequalities
	\begin{align}
		\abs*{\sum_{n=0}^m\aset{ u^{n+1} , Eu^{n+1} }_{\wt M_\Omega}} &\lesssim \sqrt{h}\, \pset*{ \norm{z^0}_M^2 + \norm{z^{m+1}}_M^2 + \sum_{n=0}^m\norm{Ez^n}_M^2 }, \\
		\abs*{\sum_{n=0}^m\aset{ u^{n+1} , Eu^{n+1} }_{\wt A_\Omega}} &\lesssim \begin{multlined}[t]
			\frac{\tau}{\sqrt{h}}\Bigg( \norm{\tfrac{1}{\tau}p^0}_{M_\Gamma}^2 + \norm{u^0}_{A_\Omega}^2 + \norm{\tfrac{1}{\tau}p^{m+1}}_{M_\Gamma}^2 \\ + \norm{u^{m+1}}_{A_\Omega}^2
			+ \sum_{n=0}^m\big( \norm{\tfrac{1}{\tau}Ep^n}_{M_\Gamma}^2 + \norm{Eu^n}_{A_\Omega}^2\big) \Bigg),
		\end{multlined}
	\end{align}
	which hold for all $m\geq 0$.
\end{lemma}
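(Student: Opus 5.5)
The plan is to apply the discrete integration by parts formula \eqref{eq:discreteIntegrationByParts:c} to the symmetric, possibly indefinite, bilinear forms $\aset{\cdot,\cdot}_{\wt M_\Omega}$ and $\aset{\cdot,\cdot}_{\wt A_\Omega}$. As observed before \Cref{lem:fakescalar_bound}, this formula holds for any symmetric bilinear form once $\norm{\cdot}^2$ is replaced by $\aset{\cdot,\cdot}$. It gives
\[
	\aset{u^{n+1},Eu^{n+1}}_{\wt M_\Omega}
	= \tfrac12 E\big(\aset{u^{n+1},u^{n+1}}_{\wt M_\Omega}\big) + \tfrac12\aset{Eu^{n+1},Eu^{n+1}}_{\wt M_\Omega},
\]
and the same identity with $\wt A_\Omega$ in place of $\wt M_\Omega$. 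Summing over $n=0,\dots,m$, the first term telescopes to $\tfrac12\aset{u^{m+1},u^{m+1}}_{\wt M_\Omega}-\tfrac12\aset{u^{0},u^{0}}_{\wt M_\Omega}$. The second term leaves $\tfrac12\sum_{n=0}^m\aset{Eu^{n+1},Eu^{n+1}}_{\wt M_\Omega}$.

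Each remaining quantity is of the form $\aset{v,v}_{\wt M_\Omega}$ or $\aset{v,v}_{\wt A_\Omega}$ for a vector $y=(v,q)$ that satisfies the constraint $M_\lambda v_2=B_\Gamma q$. This holds for $z^0$ and $z^{m+1}$ by assumption. It also holds for the differences $Ez^{n+1}=(Eu^{n+1},Ep^{n+1})$, because the constraint is linear. We can therefore apply \Cref{lem:fakescalar_bound} with $y=z$ to each term.

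In the mass case this yields $\abs{\aset{v,v}_{\wt M_\Omega}}\lesssim\sqrt h\norm{y}_M^2$, which gives the first inequality. The sum $\sum_{n=0}^m\norm{Ez^{n+1}}_M^2$ runs over indices $1,\dots,m+1$, while the statement sums $\norm{Ez^n}_M^2$ over $0,\dots,m$. I will read the statement's sum as covering the differences that actually occur; if necessary, the single extra term $\norm{Ez^{m+1}}_M^2\lesssim \norm{z^{m+1}}_M^2+\norm{z^m}_M^2$ can be absorbed, or the indices shifted. I expect this index bookkeeping to be the only delicate point, and in any case the convention that $Ez^0$ involves no earlier data makes it harmless.

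In the stiffness case, \Cref{lem:fakescalar_bound} gives
\[
	\abs{\aset{v,v}_{\wt A_\Omega}}\lesssim\tfrac{\tau}{\sqrt h}\big(\norm{v}_{A_\Omega}+\norm{\tfrac1\tau q}_{M_\Gamma}\big)^2
	\lesssim\tfrac{\tau}{\sqrt h}\big(\norm{v}_{A_\Omega}^2+\norm{\tfrac1\tau q}_{M_\Gamma}^2\big),
\]
where the last step uses $(a+b)^2\le 2a^2+2b^2$. Inserting $v=u^0$, $u^{m+1}$ and $Eu^{n+1}$ (with the corresponding $q=p^0$, $p^{m+1}$, $Ep^{n+1}$) gives the second inequality, with the same index remark as above. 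The assumption $\tau\lesssim\sqrt h$ enters only through \Cref{lem:fakescalar_bound}, so no further obstacle is expected.
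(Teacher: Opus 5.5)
Your proposal is correct and matches the paper's proof: apply \eqref{eq:discreteIntegrationByParts:c} to the symmetric indefinite forms, telescope, and bound $\aset{u^0,u^0}$, $\aset{u^{m+1},u^{m+1}}$ and each $\aset{Eu^{n+1},Eu^{n+1}}$ with \Cref{lem:fakescalar_bound}, using linearity of the constraint for the differences. Your reading of the sum as $\sum_{n=0}^m\norm{Ez^{n+1}}_M^2$ is the right one, since it is what the argument produces and what is used when the lemma is applied later. Drop the fallback of absorbing $\norm{Ez^{m+1}}_M^2$ via $\norm{z^m}_M^2$: that term is not controlled by the right-hand side uniformly in $m$, and the literally indexed inequality actually fails for large $m$, so the shift corrects the statement rather than being a harmless convention.
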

\begin{proof}
	Applying the discrete integration by parts formula~\eqref{eq:discreteIntegrationByParts:c} %, we obtain
%	\[
%	\aset{u^{n+1},Eu^{n+1}}_{\wt M_\Omega}
%	= \tfrac{1}{2}E\aset{u^{n+1},u^{n+1}}_{\wt M_\Omega} + \tfrac{1}{2}\aset{Eu^{n+1},Eu^{n+1}}_{\wt M_\Omega}
%	\]
%	and, therefore,
	for $\aset{\,\cdot\,, \cdot\,}_{\wt M_\Omega}$ yields
	\begin{align*}
		\abs*{\sum_{n=0}^m \aset{u^{n+1},Eu^{n+1}}_{\wt M_\Omega}}
		\leq \frac{1}{2}\,\Big(\abs[\big]{\aset{u^{0},u^{0}}_{\wt M_\Omega}}
		&+ \abs[\big]{\aset{u^{m+1},u^{m+1}}_{\wt M_\Omega}}
		\\
		&\qquad\quad+ \sum_{n=0}^m\abs[\big]{\aset{Eu^{n+1},Eu^{n+1}}_{\wt M_\Omega}}\Big).
	\end{align*}
	The first inequality then immediately follows by \Cref{lem:fakescalar_bound}.
	The second inequality is proven analogously.
\end{proof}

Let us now define the quantity
\begin{equation}
	H_0 
	\coloneqq \sum_{n=1}^3\, \norm[\big]{\tfrac{1}{\tau}Ez^n}_M^2 + \sum_{n=0}^3\, \norm{z^n}_A^2,
\end{equation}
which represents the energy of the initial values (see also \eqref{eq:PBE}).
Note that this holds for both the original and auxiliary discrete states $z^n$ and $\hat z^n$, since by construction $\hat z^n=z^n$ for $n\leq 3$.
Furthermore, we define the quantities
\begin{multline}
	\varphi^n 
	\coloneqq \tfrac{1}{6}\, \norm[\big]{\tfrac{1}{\tau}E\hat z^{n+4}}_{M}^2
	+ \tfrac{1}{3}\, \norm[\big]{\tfrac{1}{\tau}(E+\tfrac{3}{2}E^2)\hat z^{n+4}}_{M}^2
	+ \tfrac{1}{4}\, \norm{\hat z^{n+4}}_{A}^2
	+ \tfrac{1}{4}\, \norm{(I+E)\hat z^{n+4}}_{A}^2 \\
	+ \tsum_{k=0}^n \pset[\big]{ \tfrac{3}{4}\,\norm[\big]{\tfrac{1}{\tau}E^3\hat z^{k+4}}_{M_\Omega}^2 + \tfrac{1}{4}\,\norm{E^2\hat z^{k+4}}_{A}^2 }
\end{multline}
and
\begin{equation}
	b^n 
	\coloneqq \tau\,\norm[\big]{f_M^{n+4}}_M
	+ \sqrt{h}\tau\, \norm[\big]{f_{\wt M}^{n+4}}_M
	+ \tfrac{\tau}{\sqrt{h}}\, \norm[\big]{f_{\wt A,\Omega}^{n+4}}_{A_\Omega} 
	+ \tfrac{1}{\sqrt{h}}\, \norm[\big]{f_{\wt A,\Gamma}^{n+4}}_{M_\Gamma}
\end{equation}
for all $n\geq 0$.  
The following lemma relates these quantities to the normal and auxiliary states introduced in~\eqref{eq:defAuxStates}.

\begin{lemma}\label{lem:aux_vs_orig_state}
	Let $S\in\R[E]$ be a polynomial in $E$, where its degree $\ell\geq 0$ is independent of the discretization parameters.
	Then for every $n\geq\ell-4$ the following inequalities hold: 
	\begin{enumerate}
		\item $\norm{S(E)\hat z^{n+4}}_A \lesssim \begin{cases}\sqrt{H_0} & \text{if }n<0, \\ \sqrt{H_0} + \sqrt{\varphi^n} & \text{if }0\leq n<\ell-2 \\ \sqrt{\varphi^n} & \text{otherwise}.\end{cases}$
		\item If $E\mid S$, i.e., if the polynomial $S$ has no constant coefficient, then\\ 
		$\norm[\big]{\tfrac{1}{\tau}S(E)\hat z^{n+4}}_M \lesssim \begin{cases}\sqrt{H_0} & \text{if }n<0, \\ \sqrt{H_0} + \sqrt{\varphi^n} & \text{if }0\leq n<\ell-3 \\ \sqrt{\varphi^n} & \text{otherwise}.\end{cases}$
		%
%		\item $\norm{\tfrac{1}{\tau}E^{\ell+1}\hat z^{n+4}}_M^2+\norm{E^\ell \hat z^{n+4}}_A^2\lesssim\varphi^n$,
		%
%		\item $\norm{\tfrac{1}{\tau}E^\ell\hat z^{n+4} - \tfrac{1}{\tau}E^\ell z^{n+4}}_M^2 \lesssim h\varphi^n$,
		%
		\item $\norm{S(E)\hat z^{n+4} - S(E)z^{n+4}}_A \lesssim \begin{cases}0 & \text{if }n<0, \\ \frac{\tau}{\sqrt{h}} \big(\sqrt{H_0} + \sqrt{\varphi^n}\big) & \text{if }0\leq n<\ell+1 \\ \frac{\tau}{\sqrt{h}}\sqrt{\varphi^n} & \text{otherwise}.\end{cases}$
		\item $\norm[\big]{\tfrac{1}{\tau}S(E)\hat z^{n+4} - \tfrac{1}{\tau}S(E)z^{n+4}}_M \lesssim \begin{cases}0 & \text{if }n<0, \\ \sqrt{h}\,\big(\sqrt{H_0} + \sqrt{\varphi^n}\big) & \text{if }0\leq n<\ell+1 \\ \sqrt{h}\sqrt{\varphi^n} & \text{otherwise}.\end{cases}$
%		\item $\norm{E^\ell\hat z^{n+4} - E^\ell z^{n+4}}_A^2 \lesssim \tfrac{\tau^2}{h}\varphi^n$,
		%
%		\item $\norm{\tfrac{1}{\tau}ES(E)z^{n+4}}_M + \norm{S(E)z^{n+4}}_A^2\lesssim(1+\sqrt{h}+\frac{\tau}{\sqrt{h}})\sqrt{\varphi^n}$.
		\item $\norm{S(E)z^{n+4}}_A \lesssim \begin{cases} \sqrt{H_0} & \text{if }n<0, \\ (1+\tfrac{\tau}{\sqrt{h}})\big(\sqrt{H_0} + \sqrt{\varphi^n}\big) & \text{if }0\leq n<\ell+1 \\ (1+\tfrac{\tau}{\sqrt{h}})\sqrt{\varphi^n} & \text{otherwise}.\end{cases}$
		\item If $E\mid S(E)$, then $\norm[\big]{\tfrac{1}{\tau}S(E)z^{n+4}}_M \lesssim \begin{cases}\sqrt{H_0} & \text{if }n<0, \\ (1+\sqrt{h})\big(\sqrt{H_0} + \sqrt{\varphi^n}\big) & \text{if }0\leq n<\ell+1 \\ (1+\sqrt{h})\sqrt{\varphi^n} & \text{otherwise}.\end{cases}$
	\end{enumerate}
\end{lemma}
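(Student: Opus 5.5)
The plan is to reduce everything to the ``atomic'' quantities controlled by $\varphi^n$ and $H_0$, namely $\norm{\hat z^{k+4}}_A$, $\norm{E^2\hat z^{k+4}}_A$, $\norm{\tfrac1\tau E\hat z^{k+4}}_M$ and $\norm{\tfrac1\tau E^3\hat z^{k+4}}_{M}$ for $0\le k\le n$, plus the initial values. First we record what $\varphi^n$ gives directly. It bounds $\norm{\hat z^{n+4}}_A$ and $\norm{(I+E)\hat z^{n+4}}_A$, hence $\norm{\hat z^{n+3}}_A=\norm{(I-E)\hat z^{n+4}}_A\lesssim\sqrt{\varphi^n}$ after writing $I-E=2I-(I+E)$. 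It bounds $\norm{\tfrac1\tau E\hat z^{n+4}}_M$ and $\norm{\tfrac1\tau(E+\tfrac32E^2)\hat z^{n+4}}_M$, hence $\norm{\tfrac1\tau E^2\hat z^{n+4}}_M$ and $\norm{\tfrac1\tau E\hat z^{n+3}}_M$. Finally, the sum in $\varphi^n$ bounds the individual terms $\norm{E^2\hat z^{k+4}}_A$ and $\norm{\tfrac1\tau E^3\hat z^{k+4}}_{M_\Omega}$ for every $k\le n$.

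Note that $\varphi^n$ is monotone in the sum part but not in the leading terms, so earlier states are obtained by the relation $\hat z^{m-1}=\hat z^m-E\hat z^m$. For part (1), write $S(E)=\sum_{j=0}^\ell s_jE^j$ and express each $E^j\hat z^{n+4}$, $j\ge 2$, through $E^2$-differences, using $E^j\hat z^{n+4}=E^{j-2}E^2\hat z^{n+4}$. Expanding $E^{j-2}$ in shifts gives a finite combination of $E^2\hat z^{n+4-i}$ with $i\le j-2\le\ell-2$. Terms with $n-i\ge0$ are bounded by $\sqrt{\varphi^n}$ (the sum term), and terms with $n-i<0$ involve only $\hat z^0,\dots,\hat z^3=z^0,\dots,z^3$ and are bounded by $\sqrt{H_0}$. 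The constant and $E$ terms are handled by $\hat z^{n+4},\hat z^{n+3}$ as above. This explains the threshold $n<\ell-2$ and the case $n<0$, where everything is initial data. For the case $n<0$ one should check that with $n\ge\ell-4$ all indices stay $\ge0$. Part (2) is identical, using $\tfrac1\tau E,\tfrac1\tau E^2$ and $\tfrac1\tau E^3$-shifts, which gives the threshold $\ell-3$. One caveat: the sum in $\varphi^n$ controls $E^3$ only in the $M_\Omega$-norm. The $\Gamma$-component of $\tfrac1\tau E^3 \hat z$ should instead be recovered from $\tfrac1\tau E^2\hat z$ at two consecutive indices; I expect this, i.e.\ obtaining $\norm{\tfrac1\tau E^2\hat z^{k+4}}_M$ for earlier $k$, to be the main bookkeeping obstacle. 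I would try telescoping back from the available $E$ and $E^2$ bounds at index $n+4$ using the $E^3$ sum terms, and absorb the $\Gamma$-part via the $M$-bound on $\tfrac1\tau E\hat z^{k+4}$ from the leading terms of $\varphi^k$ if needed.

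For parts (3) and (4), note that $\hat z^m-z^m=(0,\hat u_2^m-u_2^m,0)$. For $m\le3$ this vanishes, and for $m\ge4$ it equals $(0,E^4\hat u_2^m,0)$ by \eqref{eq:multistep:alg}, since $\ddf_0=I-E^4$. Hence $S(E)(\hat z-z)^{n+4}$ has only a $u_2$-component. That component is $P$ applied to a polynomial of degree at most $\ell+4$ in $E$ acting on $p$, where values with index $<4$ are truncated; the truncation only produces $H_0$ terms when $n<\ell+1$. Applying \Cref{lem:Christoph} gives $\norm{\cdot}_{A}\lesssim h^{-1/2}\norm{\cdot}_{M_\Gamma}=\tfrac{\tau}{\sqrt h}\norm{\tfrac1\tau\cdot}_{M_\Gamma}$ and $\norm{\cdot}_{M}\lesssim\sqrt h\norm{\cdot}_{M_\Gamma}$. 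Because $E^4$ divides the expression, at least one factor $E$ is present, so $\norm{\tfrac1\tau E(\cdots)p}_{M_\Gamma}$ is controlled as in part (2), yielding (3). For (4) we use $\norm{\tfrac1\tau E^4(\cdots)}$, and the extra factor $E$ is again handled by part (2). Parts (5) and (6) then follow from the triangle inequality combining (1) with (3) and (2) with (4).
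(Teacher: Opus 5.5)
Your route is the paper's. You reduce everything to the atomic terms of $\varphi^n$ and expand $E^j$ into shifted $E^2$-differences for the $A$-norm, resp.\ $E^3$-differences for the $M$-norm, with leftover indices $\le 3$ producing $\sqrt{H_0}$. For (3)--(4) you use $\hat z^m-z^m=(0,PE^4p^m,0)$ for $m\ge 4$ (and $0$ for $m\le 3$) together with \Cref{lem:Christoph}, and (5)--(6) follow by the triangle inequality. Routing (3)--(4) through (2), applied to $E^4(I-E)^i$ at index $n+4$, instead of bounding $\tfrac1\tau E^3p^{k+4}$ directly as the paper does, is an inessential variation, and your thresholds match.

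The one point to correct is your caveat in (2). It matters, because you derive (3), (4) and (6) from (2).

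\emph{The subscript is a misprint.} The $M_\Omega$ in the sum term of $\varphi^n$ should be $M$; note that $M_\Omega$ cannot even act on $\hat z^{k+4}\in\R^{N_\Omega+N_\Gamma}$. This term comes from \Cref{lem:bdf-bdf_formula} with $k=1$, i.e.\ $\aset{\tau\bdf_1\hat z^{k+4},\bdf_2\hat z^{k+4}}_M=\tfrac34\norm{\tfrac1\tau E^3\hat z^{k+4}}_M^2+E(\cdots)$, in the proof of \Cref{lem:discreteEnergyStability}. Both that proof and the paper's proof of (3) use $\sum_{k=0}^n\norm{\tfrac1\tau E^3\hat z^{k+4}}_M^2\lesssim\varphi^n$ including its $\Gamma$-component. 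With the full $M$-norm there is no obstacle.

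\emph{Your workaround would not repair the literal reading.}
\begin{itemize}
\item Bounding $\tfrac1\tau E\hat z^{k+4}$ by the leading terms of $\varphi^k$ gives a bound in $\varphi^k$. As you note yourself, $\varphi^k$ is not dominated by $\varphi^n$.
\item Telescoping $\tfrac1\tau E^2p^{k+4}$ back from index $n+4$ needs exactly the $\Gamma$-part of $\tfrac1\tau E^3\hat z$ that you declared missing, so it is circular.
\end{itemize}
Under the literal definition the $\Gamma$-part is only controlled with a loss, e.g.\ $\norm{\tfrac1\tau E^2p^{n+3}}_{M_\Gamma}\le\tau^{-1}\norm{E^2\hat z^{n+3}}_A$, so (2) would not follow from it. Read with $M$, your argument is complete and coincides with the paper's.
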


\begin{proof}
	Note that, since $\ell$ is independent of the discretization parameters, it is sufficient to prove the statements for $S(E)=E^\ell$ and $\ell\geq 0$ (or $\ell\geq 1$ for the second and last inequalities). The general result then follows by the triangle inequality.
	
	Furthermore, since the second, fourth, and sixth inequalities can be proven analogously to the first, third, and fifth, respectively, we will only show the latter.
	
	\begin{enumerate}
		\item For $\ell=0$ or $n<0$, the first inequality holds trivially by the definitions of $H_0$ and $\varphi^n$. Let us now assume $n\geq 0$.
		For $\ell=1$, we observe that
		\begin{align*}
			\norm{E\hat z^{n+4}}_A
			&= \norm{(I+E)\hat z^{n+4} - \hat z^{n+4}}_A \\
			&\lesssim \big(\norm{\hat z^{n+4}}_A^2 + \norm{(I+E)\hat z^{n+4}}_A^2\big)^{\frac{1}{2}}
			\lesssim \sqrt{\varphi^n}.
		\end{align*}
		For $\ell\geq 2$, we note that
		\begin{align*}
			\norm[\big]{E^\ell\hat z^{n+4}}_A
%			&\leq \tsum_{k=n-\ell+2}^n\norm{E^2\hat z^{k+4}}_A \\
			&\lesssim \big(\tsum_{k=n-\ell+2}^n\norm{E^2\hat z^{k+4}}_A^2\big)^{\frac{1}{2}}
			\\
			&\leq \big(\tsum_{k=n-\ell+2}^{-1}\norm{E^2\hat z^{k+4}}_A^2\big)^{\frac{1}{2}}
			+ \big(\tsum_{k=0}^n\norm{E^2\hat z^{k+4}}_A^2\big)^{\frac{1}{2}}
			\\
			&\lesssim \big(\tsum_{k=n-\ell}^{-1}\norm{\hat z^{k+4}}_A^2\big)^{\frac{1}{2}}
			+ \sqrt{\varphi^n}
			\lesssim \sqrt{H_0} + \sqrt{\varphi^n},
		\end{align*}
		where the first term in the last line actually vanishes for $n\geq\ell-2$.
		\addtocounter{enumi}{1}
		%
%		\item The second inequality is mostly analogous to the first one, shifting $\ell$ by one. Note that the case $\ell=0$ can be excluded due to the condition $E\mid S(E)$.
%		We remark that 
%		\begin{align*}
%			\norm{\tfrac{1}{\tau}E^{\ell+1}\hat z^{n+4}}_M
%			&\leq \tsum_{k=n-\ell+2}^n\norm{\tfrac{1}{\tau}E^3\hat z^{k+4}}_M \\
%			&\leq \sqrt{\ell-1}\big(\tsum_{k=n-\ell+2}^n\norm{\tfrac{1}{\tau}E^3\hat z^{k+4}}_M^2\big)^{\frac{1}{2}}
%			\lesssim \sqrt{\varphi^n}
%		\end{align*}
%		and analogously
%		\[
%			\norm{E^\ell\hat z^{n+4}}_A
%			\leq \sqrt{\ell-1}\big(\tsum_{k=n-\ell+2}^n\norm{E^2\hat z^{k+4}}_A^2\big)^{\frac{1}{2}}
%			\lesssim \sqrt{\varphi^n}.
%		\]
		%
		\item For the third inequality, we observe due to the consistency of the initial condition and the definition of $\hat z^n$ that
%		\[
%		\hat z^k - z^k =
%		\begin{cases}
%			\bsmat{0 \\ E^4\hat z_2^k \\ 0} & \text{for }k\geq 4, \\
%			0 & \text{otherwise}.
%		\end{cases}
%		\]
%		We deduce that
		\begin{align*}
			\norm[\big]{E^\ell\hat z^{n+4}-E^\ell z^{n+4}}_A
			&= \norm[\big]{E^\ell(\hat u_2^{n+4}-u_2^{n+4})}_{A_{22}}
			\leq \tsum_{k=n-\ell}^n\norm[\big]{\hat u_2^{k+4}-u_2^{k+4}}_{A_{22}}
			\\
			&\leq \tsum_{k=\max(0,n-\ell)}^n\norm[\big]{E^4\hat u_2^{k+4}}_{A_{22}}
			\lesssim \tfrac{\tau}{\sqrt{h}}\,\tsum_{k=\max(-1,n-\ell-1)}^n\norm[\big]{\tfrac{1}{\tau}E^3p^{k+4}}_{M_\Gamma},
		\end{align*}
		where the second to last sum actually vanishes for $n<0$.
		Let us now assume $n\geq 0$.
		Similarly to before, we split
		\begin{align*}
			\tsum_{k=\max(-1,n-\ell-1)}^n\norm[\big]{\tfrac{1}{\tau}E^3p^{k+4}}_{M_\Gamma}
			&\lesssim \norm[\big]{\tfrac{1}{\tau}E^3p^3}_{M_\Gamma}
			+ \tsum_{k=0}^n \big(\norm[\big]{\tfrac{1}{\tau}E^3p^{k+4}}_{M_\Gamma}^2\big)^{\frac{1}{2}}
			\\
			&\lesssim \big(\tsum_{k=1}^{3}\norm[\big]{\tfrac{1}{\tau}Ep^{k}}_{M_\Gamma}^2\big)^{\frac{1}{2}}
			+ \sqrt{\varphi^n}
			\lesssim \sqrt{H_0} + \sqrt{\varphi^n}
		\end{align*}
		and note that the first term is actually only there when $n\geq\ell+1$.
		\addtocounter{enumi}{1}
		%
%		\item The fourth inequality is analogous to the third one.
%		\begin{multline*}
%			\norm{\tfrac{1}{\tau}E^\ell\hat z^{n+4} - \tfrac{1}{\tau}E^\ell z^{n+4}}_M
%			= \norm{\tfrac{1}{\tau}E^\ell(\hat z^{n+4}-z^{n+4})}_M
%			\leq \tsum_{k=n-\ell}^{n} \norm{\tfrac{1}{\tau}(\hat z^{k+4}-z^{k+4})}_M
%			\\
%			\lesssim \tfrac{1}{\tau^2} \tsum_{k=\max(4,n+4-\ell)}^{n+4} \norm{E^4\hat z_2^k}_{M_{22}}^2
%			\lesssim \tfrac{h}{\tau^2} \tsum_{k=\max(4,n+4-\ell)}^{n+4} \norm{E^4p^k}_{M_\Gamma}^2
%			\\
%			\lesssim h\tsum_{k=4}^{n+4}\norm{\tfrac{1}{\tau}E^3p^k}_{M_\Gamma}^2 \leq h\varphi^n.
%		\end{multline*}
%		%
%		\item The third inequality follows analogously to the second one, since
%		%
%		\begin{multline*}
%			\norm{E^\ell\hat z^{n+4}-E^\ell\hat z^{n+4}}_A^2
%			\lesssim \tsum_{k=\max(4,n+4-\ell)}^{n+4} \norm{E^4\hat z_2^k}_{A_{22}}^2
%			\\
%			\lesssim \tfrac{\tau^2}{h}\tsum_{k=\max(4,n+4-\ell)}^{n+4} \norm{\tfrac{1}{\tau}E^4p^k}_{M_\Gamma}^2
%			\lesssim \tfrac{\tau^2}{h}\tsum_{k=4}^{n+4}\norm{\tfrac{1}{\tau}E^3p^k}_{M_\Gamma}^2 \leq \tfrac{\tau^2}{h}\varphi^n.
%		\end{multline*}
%		%
		\item The fifth inequality immediately follows from the first and third, since
		\[
		\norm{E^\ell z^{n+4}}_A 
		\leq \norm{E^\ell\hat z^{n+4}}_A + \norm{E^\ell\hat z^{n+4}-E^\ell z^{n+4}}_A. \qedhere
		\]
	\end{enumerate}
\end{proof}
%
%
%======================================================================
\subsection{Energy stability}

We are now ready to prove the following lemma, which is central to the stability of the method.

\begin{lemma}\label{lem:discreteEnergyStability}
	If $\tfrac{\tau}{\sqrt{h}}$ is sufficiently small, then it holds for all $m\geq 0$, 
	\begin{equation}\label{eq:varphiStability}
		\sqrt{\varphi^m} 
		\lesssim \sqrt{H_0} +  \sum_{n=0}^mb^n.
	\end{equation}
\end{lemma}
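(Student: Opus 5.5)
The plan is an energy argument for the perturbed $\bdf$-2 formulation~\eqref{eq:multistep_aux}. We test the equation with a multiplier of the form $\tau\cdot\tfrac{1}{\tau}(E+\tfrac32E^2)\hat z^{n+4}$, i.e.\ with $\bdf_1\hat z^{n+4}$ scaled by $\tau$. Since $B\hat z^{k}=0$ for all $k$, the Lagrange multiplier term drops out, because $\aset{B^\top\lambda^{n+4},S(E)\hat z^{n+4}}=0$.

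\textbf{Step 1: telescoping identity for the unperturbed terms.} For the principal terms, a $G$-stability/Dahlquist type algebraic identity should be used. Writing $\tau^2\bdf_2=E^2+E^3$ and $\tau\bdf_1=E+\tfrac12E^2$, the products $\aset{(E^2+E^3)x,(E+\tfrac12E^2)x}_M$ and $\aset{x,(E+\tfrac12E^2)x}_A$ should be rewritten, via the discrete integration by parts formulae~\eqref{eq:discreteIntegrationByParts}, as a difference $E(\cdot)$ of exactly the quadratic forms appearing in $\varphi^n$ plus nonnegative remainders. Those remainders are $\tfrac34\norm{\tfrac1\tau E^3\hat z}^2_M$ and $\tfrac14\norm{E^2\hat z}_A^2$, which are the sums inside $\varphi^n$. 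I would verify this identity by matching coefficients of the quadratic forms in $(x^{n+4},\dots,x^{n+1})$. It is pure algebra, and the constants $\tfrac16,\tfrac13,\tfrac14$ in $\varphi^n$ presumably come from it. One detail: the multiplier may need to be $E+\tfrac32E^2$ rather than $E+\tfrac12E^2$. I would choose whichever one makes the identity exact with the given $\varphi$.

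Summing over $n=0,\dots,m$ then gives $\varphi^m\lesssim H_0+\sum_n(\text{perturbation terms})+\sum_n(\text{source terms})$. The initial contributions are bounded by $H_0$ using \Cref{lem:aux_vs_orig_state} with $n<0$.

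\textbf{Step 2: source terms.} The term $\tau\aset{f_M^{n+4},\tfrac1\tau S(E)\hat z^{n+4}}_M$ is bounded by $\tau\norm{f_M}_M\sqrt{\varphi^n}$. The $\wt M$ term is handled with \Cref{lem:fakescalar_bound}, which gives a factor $\sqrt h$. The $\wt A$ term is handled with the second estimate of \Cref{lem:fakescalar_bound}, where the surface part $\norm{\tfrac1\tau\cdot}_{M_\Gamma}$ produces the factor $\tfrac1{\sqrt h}\norm{f_{\wt A,\Gamma}}_{M_\Gamma}$. In this term one sees $\norm{S(E)\hat z}_A$ and $\norm{\tfrac1\tau S(E)p}_{M_\Gamma}$, both of which are bounded by $\sqrt{H_0}+\sqrt{\varphi^n}$ via \Cref{lem:aux_vs_orig_state}. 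Altogether these contributions are $\lesssim b^n(\sqrt{H_0}+\max_{k\le m}\sqrt{\varphi^k})$.

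\textbf{Step 3: the perturbation $E^4(\tfrac2{\tau^2}\wt M+\wt A)\hat z^{n+4}$. This is the main obstacle.} Pairing it with $(E+\tfrac32E^2)\hat z^{n+4}$ gives $\aset{\tfrac1\tau E^4\hat z,\tfrac1\tau(\cdot)\hat z}_{\wt M}$ and $\aset{E^4\hat z,(\cdot)\hat z}_{\wt A}$, with only one factor of $\tau$ left over. To avoid losing a power of $\tau$, I would write $E^4=E\cdot E^3$ and sum by parts with \Cref{lem:fakenorm_bound2} and the general symmetric-form versions of~\eqref{eq:discreteIntegrationByParts}. This moves one difference onto the other factor, so the terms become boundary terms plus sums of $\norm{\tfrac1\tau E^3\hat z}_{M}^2$ and $\norm{E^2\hat z}_A^2$. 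Those sums are exactly the dissipative pieces in $\varphi$. \Cref{lem:fakescalar_bound} then gives prefactors $\sqrt h$ and $\tfrac{\tau}{\sqrt h}$ respectively; the $\tfrac1\tau p$ norms are controlled by $\tfrac1\tau E^3p$ or by $\varphi$ via \Cref{lem:aux_vs_orig_state}. Hence the total perturbation is bounded by $(\sqrt h+\tfrac\tau{\sqrt h})(H_0+\varphi^m)$. Since $h$ and $\tfrac\tau{\sqrt h}$ are small, this can be absorbed into the left-hand side.

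\textbf{Step 4: conclusion.} We arrive at $\varphi^m\lesssim H_0+\sum_{n\le m}b^n\max_{k\le m}\sqrt{\varphi^k}+\sum_n b^nC\sqrt{H_0}$. Taking the maximum over $m'\le m$ and using $xy\le\tfrac12x^2+\tfrac12y^2$ yields $\max_k\varphi^k\lesssim H_0+(\sum b^n)^2$. Taking square roots gives~\eqref{eq:varphiStability}.
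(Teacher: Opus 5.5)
Your proposal is correct and follows the paper's proof essentially step by step. The right multiplier is $\tau\bdf_1\hat z^{n+4}=(E+\tfrac12E^2)\hat z^{n+4}$, and $E+\tfrac32E^2$ only appears inside the telescoped quadratic form from \Cref{lem:bdf-bdf_formula} with $k=0,1$. The rest matches as well: you sum the perturbation terms by parts via $E^4=E\cdot E^3$ and bound the $\wt M$, $\wt A$ contributions with \Cref{lem:fakescalar_bound,lem:fakenorm_bound2}. The one cosmetic difference is the final step, where your max-over-$m'\le m$ plus Young argument replaces the paper's discrete Gr\"onwall lemma; it is an equally valid route.
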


\begin{proof}
	Consider~\eqref{eq:multistep_aux:1} with test function $\tau\bdf_1\hat z^{n+4}$.
	In the following inequalities, we apply \Cref{lem:aux_vs_orig_state} several times.
	By applying \Cref{lem:bdf-bdf_formula} for $k=0,1$, we get  
	\begin{align*}
		\aset{\tau\bdf_1\hat z^{n+4} , \bdf_2\hat z^{n+4}}_M
		&= \tfrac{3}{4}\, \norm{\tfrac{1}{\tau}E^3\hat z^{n+4}}_M^2
		+ E\big(\tfrac{1}{6}\norm{\tfrac{1}{\tau}E\hat z^{n+4}}_M^2
		+ \tfrac{1}{3}\norm{\tfrac{1}{\tau}(E+\tfrac{3}{2}E^2)\hat z^{n+4}}_M^2\big), \\
		\aset{\tau\bdf_1\hat z^{n+4} , \hat z^{n+4}}_A
		&= \tfrac{1}{4}\, \norm{E^2\hat z^{n+4}}_A^2
		+ E\big(\tfrac{1}{4}\norm{\hat z^{n+4}}_A^2
		+ \tfrac{1}{4}\norm{(I+E)\hat z^{n+4}}_A^2\big)
	\end{align*}
	and, therefore,  
	\begin{align*}
		&\sum_{k=0}^n \Big(
		\aset{\tau\bdf_1\hat z^{k+4} , \bdf_2\hat z^{k+4}}_M
		+ \aset{\tau\bdf_1\hat z^{k+4} , \hat z^{k+4}}_A
		\Big)
		\\
		&\quad= \varphi^n
		- \tfrac{1}{6}\norm{\tfrac{1}{\tau}E\hat z^3}_M^2
		- \tfrac{1}{3}\norm{\tfrac{1}{\tau}(E+\tfrac{3}{2}E^2)\hat z^3}_M^2
		- \tfrac{1}{4}\norm{\hat z^3}_A^2
		- \tfrac{1}{4}\norm{(I+E)\hat z^3}_A^2
		\geq \varphi^n - CH_0.
	\end{align*}
	For the perturbation terms, we obtain by applying discrete integration by parts, 
	\begin{align*}
		\aset{(E+\tfrac{1}{2}&E^2)\hat u^{k+4},E^4\hat u^{k+4}}
		\\
		&= E\aset{E\hat u^{k+4},E^3\hat u^{k+4}}
		- \aset{E^2\hat u^{k+4},E^3\hat u^{k+4}}
		+ \tfrac{3}{2}\aset{E^2\hat u^{k+4},E^4\hat u^{k+4}}
		\\
		&= E\aset{(E+\tfrac{3}{2}E^2)\hat u^{k+4},E^3\hat u^{k+4}}
		- \aset{E^2\hat u^{k+4}, E^3\hat u^{k+4}}
		- \tfrac{3}{2}\aset{E^3\hat u^{k+4},E^3\hat u^{k+3}}
	\end{align*}
	for all $k\geq 0$. Therefore, using \Cref{lem:fakescalar_bound,lem:fakenorm_bound2},
	\begin{align*}
		\abs[\big]{\tsum_{k=0}^n &\aset{(E+\tfrac{1}{2}E^2)\hat u^{k+4},\tfrac{1}{\tau^2}E^4\hat u^{k+4}}_{\wt M_\Omega}}
		\\
		&\leq \abs[\big]{\aset[\big]{\tfrac{1}{\tau}(E+\tfrac{3}{2}E^2)\hat u^{3},\tfrac{1}{\tau}E^3\hat u^{3}}_{\wt M_\Omega}}
		+ \abs[\big]{\aset[\big]{\tfrac{1}{\tau}(E+\tfrac{3}{2}E^2)\hat u^{n+4},\tfrac{1}{\tau}E^3\hat u^{n+4}}_{\wt M_\Omega}}
		\\
		&\qquad+ \tsum_{k=0}^n\abs[\big]{\aset[\big]{\tfrac{1}{\tau}E^2\hat u^{k+4}, \tfrac{1}{\tau}E^3\hat u^{k+4}}_{\wt M_\Omega}}
		+ \tsum_{k=0}^n\abs[\big]{\aset[\big]{\tfrac{1}{\tau}E^3\hat u^{k+4},\tfrac{1}{\tau}E^3\hat u^{k+3}}_{\wt M_\Omega}}
		\\
		&\lesssim \sqrt{h}\,\Big(
			\norm[\big]{\tfrac{1}{\tau}(E+\tfrac{3}{2}E^2)\hat z^3}_M \norm[\big]{\tfrac{1}{\tau}E^3\hat z^3}_M
			+ \norm[\big]{\tfrac{1}{\tau}(E+\tfrac{3}{2}E^2)\hat z^{n+4}}_M \norm[\big]{\tfrac{1}{\tau}E^3\hat z^{n+4}}_M \\
			&\qquad+ \norm[\big]{\tfrac{1}{\tau}E^2\hat z^3}_M^2
			+ \norm[\big]{\tfrac{1}{\tau}E^2\hat z^{n+4}}_M^2
			+ \tsum_{k=0}^n\norm[\big]{\tfrac{1}{\tau}E^3\hat z^{k+4}}_M^2
		\Big)
		\lesssim \sqrt{h}\, \big(H_0 + \varphi^n\big)
	\end{align*}
	and
	\begin{align*}
		\abs[\big]{&\tsum_{k=0}^n \aset{(E+\tfrac{1}{2}E^2)\hat u^{k+4},E^4\hat u^{k+4}}_{\wt A_\Omega}}
		\\
		&\leq \abs[\big]{\aset{(E+\tfrac{3}{2}E^2)\hat u^{3},E^3\hat u^{3}}_{\wt A_\Omega}}
		+ \abs[\big]{\aset{(E+\tfrac{3}{2}E^2)\hat u^{n+4},E^3\hat u^{n+4}}_{\wt A_\Omega}}
		\\
		&\qquad+ \tsum_{k=0}^n\abs[\big]{\aset{E^2\hat u^{k+4}, E^3\hat u^{k+4}}_{\wt A_\Omega}}
		+ \tsum_{k=0}^n\abs[\big]{\aset{E^3\hat u^{k+4},E^3\hat u^{k+3}}_{\wt A_\Omega}}
		\\
		&\lesssim \tfrac{\tau}{\sqrt{h}}\,\Big(
			\big(\norm[\big]{(E+\tfrac{3}{2}E^2)\hat u^{3}}_{A_\Omega}
			+ \norm[\big]{\tfrac{1}{\tau}(E+\tfrac{3}{2}E^2)p^3}_{M_\Gamma}\big)
			\big(\norm{E^3\hat u^{3}}_{A_\Omega}
			+ \norm[\big]{\tfrac{1}{\tau}E^3p^3}_{M_\Gamma}\big) \\
			&\qquad+ \big(\norm{(E+\tfrac{3}{2}E^2)\hat u^{n+4}}_{A_\Omega}
			+ \norm[\big]{\tfrac{1}{\tau}(E+\tfrac{3}{2}E^2)p^{n+4}}_{M_\Gamma}\big)
			\big(\norm{E^3\hat u^{n+4}}_{A_\Omega}
			+ \norm[\big]{\tfrac{1}{\tau}E^3p^{n+4}}_{M_\Gamma}\big) \\
			&\qquad+ \norm{E^2\hat u^{3}}_{A_\Omega}^2
			+ \norm[\big]{\tfrac{1}{\tau}E^2p^{3}}_{M_\Gamma}^2
			+ \norm{E^2\hat u^{n+4}}_{A_\Omega}^2
			+ \norm[\big]{\tfrac{1}{\tau}E^2p^{n+4}}_{M_\Gamma}^2 \\
			&\qquad+ \tsum_{k=0}^n\big( \norm{E^3\hat u^{k+4}}_{A_\Omega}^2
			+ \norm[\big]{\tfrac{1}{\tau}E^3p^{k+4}}_{M_\Gamma}^2\big)
		\Big)
		\lesssim \tfrac{\tau}{\sqrt{h}}\, \big(H_0 + \varphi^n\big)
	\end{align*}
	for all $n\geq 0$.
	For the nonlinear terms, we observe 
	\begin{align*}
		\abs[\big]{\tsum_{k=0}^n\aset{(E+\tfrac{1}{2}E^2)\hat z^{k+4},f_M^{k+4}}}_M
		&\lesssim \tau\, \tsum_{k=0}^n\sqrt{\varphi^k}\, \norm{f_M^{k+4}}_M, \\
		%
%		\abs[\big]{\tsum_{k=0}^n\aset{(E+\tfrac{1}{2}E^2)\hat z^{k+4},f_A^{k+4}}}_A
%		&\lesssim \tsum_{k=0}^n\sqrt{\varphi^k}\norm{f_A^{k+4}}_A, \\
		%
		\abs[\big]{\tsum_{k=0}^n\aset{(E+\tfrac{1}{2}E^2)\hat z^{k+4},f_{\wt M}^{k+4}}}_{\wt M}
		&\lesssim \tau\sqrt{h}\,\tsum_{k=0}^n\sqrt{\varphi^k}\,\norm{f_{\wt M}^{k+4}}_M, \\
		\abs[\big]{\tsum_{k=0}^n\aset{(E+\tfrac{1}{2}E^2)\hat z^{k+4},f_{\wt A}^{k+4}}}_{\wt A}
		&\lesssim \tfrac{\tau}{\sqrt{h}}\, \tsum_{k=0}^n\sqrt{\varphi^k}\, \big(\norm{f_{A,\Omega}^{k+4}}_{A_\Omega} + \tfrac{1}{\tau}\norm{f_{A,\Gamma}^{n+4}}_{M_\Gamma}\big),
	\end{align*}
	where we applied \Cref{lem:fakescalar_bound} for the last two inequalities.
	Putting everything together, we obtain
	\[
	\pset[\big]{1-C(h+\tfrac{\tau}{\sqrt{h}})} \varphi^n
	\lesssim \big(1+h+\tfrac{\tau}{\sqrt{h}}\big) H_0
	+ \tsum_{k=0}^n\sqrt{\varphi^k}b^k
	\]
	for all $n\geq 0$.
	Then, as long as $h$ and $\tfrac{\tau}{\sqrt{h}}$ are sufficiently small, we deduce that
	\begin{equation}
		\varphi^n \lesssim H_0 + \tsum_{k=0}^n\sqrt{\varphi^k}\, b^k
	\end{equation}
	for all $n\geq 0$.
	By applying a discrete Grönwall lemma (see e.g.~\cite[Lem.~8.13]{Zim21phd}), we finally obtain the claimed stability bound. 
	%
%	\begin{equation}
%		\sqrt{\varphi^m} 
%		\lesssim \sqrt{H_0} + \tsum_{n=0}^mb^n.
%		\qedhere
%	\end{equation}
	%
\end{proof}

By combining \Cref{lem:discreteEnergyStability,lem:aux_vs_orig_state}, we immediately deduce the following energy stability bound.

\begin{theorem}\label{thm:discreteEnergyStability}
	If $\tfrac{\tau}{\sqrt{h}}$ is sufficiently small and $f_{\wt M}=f_{\wt A}=0$, then
	\begin{equation}\label{eq:discreteEnergyStability}
		\norm[\big]{\tfrac{1}{\tau}E\hat z^{m+4}}_M^2 + \norm[\big]{\hat z^{m+4}}_A^2 
		\lesssim H_0 + \pset*{ \tau \sum_{n=0}^m \norm{f_M^{n+4}}_M }^2
	\end{equation}
	for all $m\geq 0$. In particular, if $f=0$, then the energy of the discrete system is bounded independently from the discretization parameters. The same holds true by replacing the auxiliary state $\hat z^n$ with the original state $z^n$.
\end{theorem}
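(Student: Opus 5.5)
The plan is to obtain \eqref{eq:discreteEnergyStability} directly from \Cref{lem:discreteEnergyStability} and then transfer it to the original states via \Cref{lem:aux_vs_orig_state}.

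\emph{Auxiliary states.} Since $f_{\wt M}=f_{\wt A}=0$, the quantity $b^n$ reduces to $\tau\norm{f_M^{n+4}}_M$. \Cref{lem:discreteEnergyStability}, applicable because $\tfrac{\tau}{\sqrt h}$ is small, then gives
\[
\sqrt{\varphi^m}\lesssim \sqrt{H_0}+\tau\tsum_{n=0}^m\norm{f_M^{n+4}}_M .
\]
By definition, $\varphi^m$ contains the summands $\tfrac16\norm{\tfrac1\tau E\hat z^{m+4}}_M^2$ and $\tfrac14\norm{\hat z^{m+4}}_A^2$, and all its other summands are nonnegative. Hence the left-hand side of \eqref{eq:discreteEnergyStability} is bounded by $C\varphi^m$. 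Squaring the estimate above and using $(a+b)^2\le 2a^2+2b^2$ yields \eqref{eq:discreteEnergyStability}.

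\emph{Case $f=0$.} Here $f_M=0$, so the energy is bounded by $CH_0$ with $C$ independent of $h$ and $\tau$.

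\emph{Original states.} Apply \Cref{lem:aux_vs_orig_state} with $\ell=1$, using $S(E)=E$ for the $M$-part and $S(E)=I$ for the $A$-part. Items five and six give
\[
\norm{\tfrac1\tau Ez^{m+4}}_M+\norm{z^{m+4}}_A\lesssim \bigl(1+\sqrt h+\tfrac{\tau}{\sqrt h}\bigr)\bigl(\sqrt{H_0}+\sqrt{\varphi^m}\bigr),
\]
where the extra $\sqrt{H_0}$ accounts for the small-$m$ cases. The prefactor is bounded because $h\lesssim1$ and $\tfrac{\tau}{\sqrt h}$ is small. Inserting the bound on $\sqrt{\varphi^m}$ and squaring gives the same estimate for $z^{m+4}$.

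No real obstacle remains, since all the work is in the previous lemmas. The only points needing care are the case distinctions in \Cref{lem:aux_vs_orig_state}, which are absorbed by the $H_0$ term, and verifying that the prefactor $(1+\sqrt h+\tfrac{\tau}{\sqrt h})$ is uniformly bounded.
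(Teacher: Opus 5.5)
Your proposal is correct and follows the paper's route exactly. The paper states this theorem as an immediate consequence of \Cref{lem:discreteEnergyStability}, where $b^n=\tau\norm{f_M^{n+4}}_M$ once $f_{\wt M}=f_{\wt A}=0$, combined with \Cref{lem:aux_vs_orig_state} for the transfer to $z^{m+4}$; you have simply written out those details. The only slip is that $S(E)=I$ has degree $\ell=0$, not $\ell=1$, which is harmless because you use the case bound $(1+\tfrac{\tau}{\sqrt h})(\sqrt{H_0}+\sqrt{\varphi^m})$, and that bound holds in every case.
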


\begin{remark}
	By interrupting the proof of \Cref{lem:discreteEnergyStability} before applying the discrete Grönwall lemma, we can replace \eqref{eq:discreteEnergyStability} by
	\begin{equation*}
		\norm[\big]{\tfrac{1}{\tau}E\hat z^{m+4}}_M^2 + \norm[\big]{\hat z^{m+4}}_A^2 
		\lesssim H_0 + \tau \sum_{n=0}^m\norm{\bdf_1\hat z^{n+4}}_M\norm{f_M^{n+4}}_M.
	\end{equation*}
	While this inequality is less expressive in terms of the energy stability, it is evidently closely related to \eqref{eq:PBE}. %The dissipation inequality of the system is then in some sense preserved.
\end{remark}

%
%======================================================================
\subsection{State stability}

While it might seem like \Cref{lem:discreteEnergyStability} and \Cref{thm:discreteEnergyStability} only provide the stability of the energy of the system, we can show that for the model problem the stability transfers to the state.
\begin{lemma}\label{lem:M_to_A_bound}
	If $z=(u,p)\in\R^{N_\Omega+N_\Gamma}$ is such that $M_\lambda u_2=B_\Gamma p$, then $\norm{z}_M\lesssim\norm{z}_A$.
\end{lemma}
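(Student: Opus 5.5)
The plan is to reduce the claim to a Friedrichs-type inequality in the bulk, combined with an $L^2$-stability bound for the discrete trace constraint. Writing $z=(u,p)$ with $u=(u_1,u_2)$, we have $\norm{z}_M^2=\norm{u}_{M_\Omega}^2+\norm{p}_{M_\Gamma}^2$ and $\norm{z}_A^2=\norm{u}_{A_\Omega}^2+\norm{p}_{A_\Gamma}^2$. Here $\norm{u}_{A_\Omega}$ is the discrete $\norm{\nabla u_h}_{L^2(\Omega_h)}$. For the model problem, $\calK_\Gamma$ contains the full $H^1(\Gamma)$ inner product, including the zero-order term. Hence $\norm{p}_{M_\Gamma}\le\norm{p}_{A_\Gamma}$ holds trivially, up to the constant from the fact that the mass term in $A_\Gamma$ is exactly $M_\Gamma$. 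The surface part is therefore immediate, and it remains to bound $\norm{u}_{M_\Omega}$.

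\textbf{Step 1 (trace stability).} Let $u_h$ be the finite element function for $u$. With $\lambda$ discretized by the restriction of the bulk $P_k$ space to the boundary, $\norm{u_2}_{M_\lambda}$ equals $\norm{\tr u_h}_{L^2(\partial\Omega_h)}$, since $u_1$ collects the interior degrees of freedom. The constraint $M_\lambda u_2=B_\Gamma p$ states that $\tr u_h$ is the $L^2$-projection of $p_h$ onto the discrete trace space. Testing the constraint with $u_2$ gives $\norm{u_2}_{M_\lambda}^2=u_2^\top B_\Gamma p$. I will bound the coupling form $u_2^\top B_\Gamma p$ by $C\norm{u_2}_{M_\lambda}\norm{p}_{M_\Gamma}$; this is essentially Cauchy--Schwarz on the surface, with a constant accounting for the geometric mismatch between $\partial\Omega_h$ and $\Gamma_h$, handled via the lifts of \cite{EllR13}. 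The result is $\norm{\tr u_h}_{L^2}\lesssim\norm{p}_{M_\Gamma}$.

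\textbf{Step 2 (Friedrichs inequality).} Next I apply the classical inequality $\norm{v}_{L^2(\Omega)}^2\lesssim\norm{\nabla v}_{L^2(\Omega)}^2+\norm{\tr v}_{L^2(\Gamma)}^2$ for $v\in H^1(\Omega)$. I apply it to the lift of $u_h$ onto $\Omega$, or directly on $\Omega_h$ with a constant uniform in $h$. Norm equivalence of lifted functions then gives $\norm{u}_{M_\Omega}\lesssim\norm{u}_{A_\Omega}+\norm{u_2}_{M_\lambda}$. Combining this with Step 1 and the trivial surface bound yields $\norm{z}_M\lesssim\norm{u}_{A_\Omega}+\norm{p}_{A_\Gamma}\lesssim\norm{z}_A$.

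The main obstacle is ensuring $h$-independent constants. The Friedrichs constant must be uniform over the family $\Omega_h$, and the bound on $u_2^\top B_\Gamma p$ must be uniform although $\Gamma_h\neq\partial\Omega_h$. I would first try to transfer everything to the exact $\Omega$ and $\Gamma$ using the lift operators, whose $L^2$ and $H^1$ norm equivalences hold uniformly for quasi-uniform meshes, in the same spirit as the proof of \Cref{lem:Christoph}. Once on the exact domain, the continuous Friedrichs inequality with a fixed constant suffices.
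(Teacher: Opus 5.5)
Your proposal is correct and follows essentially the paper's route. The paper uses $M_\Gamma\le A_\Gamma$ for the surface part, then a Poincaré--Friedrichs inequality on $\Omega_h$ with a boundary trace term (citing \cite[Lem.~3.30]{ErnG21}). It closes with the observation that the constraint makes $\tr(u_h)$ the $L^2(\Gamma_h)$-best approximation of $p_h$ (citing \cite[Lem.~4.3]{AltZ24}), which is exactly your projection-stability Step~1. Your attention to $h$-uniform constants and to the mismatch $\Gamma_h\neq\partial\Omega_h$ is a welcome refinement of details the paper delegates to these citations, not a different approach.
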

\begin{proof}
	By construction of $M_\Gamma$ and $A_\Gamma$, it holds that $M_\Gamma\leq A_\Gamma$ in the sense of the Loewner partial order, i.e., $A_\Gamma-M_\Gamma\geq 0$.
	Thus, we deduce that
	\[
	\norm{p}_{M_\Gamma}^2
	\leq \norm{p}_{A_\Gamma}^2
	\leq \norm{u}_{A_\Omega}^2 + \norm{p}_{A_\Gamma}^2
	= \norm{z}_{A}^2,
	\]
	in particular $\norm{p}_{M_\Gamma}\lesssim\norm{z}_A$.
	Let us now recall that $z=(u,p)$ are the coordinates of some functions $z_h=(u_h,p_h)$ with $u_h\in H^1(\Omega_h)$ and $p_h\in H^1(\Gamma_h)$ in terms of the basis functions used in the finite element construction.
	Then, by using a variant of the Poincaré inequality \cite[Lem.~3.30]{ErnG21}, and the fact that $\tr(u_h)$ is due to $M_\lambda u_2=B_\Gamma p$ the $L^2(\Gamma_h)$-best approximation of $p_h$ in its corresponding space (see~\cite[Lem.~4.3]{AltZ24}), we obtain
	\begin{align*}
		\norm{u}_{M_\Omega}
		&= \norm{u_h}_{L^2(\Omega_h)}
		\lesssim \norm{\nabla u_h}_{L^2(\Omega_h)} + \norm{\tr(u_h)}_{L^2(\Gamma_h)}
		\\
		&\leq \norm{\nabla u_h}_{L^2(\Omega_h)} + \norm{p_h}_{L^2(\Gamma_h)}
		= \norm{u}_{A_\Omega} + \norm{p}_{M_\Gamma}
		\leq \norm{z}_A.
	\end{align*}
	We conclude that $\norm{z}_M\lesssim\norm{u}_{M_\Omega}+\norm{p}_{M_\Gamma}\lesssim\norm{z}_A$.
\end{proof}

From \Cref{lem:M_to_A_bound}, we deduce that $\norm{S(E)\hat z^{n+4}}_M\lesssim\norm{S(E)\hat z^{n+4}}_A$ and
\[
	\norm{S(E)z^{n+4}}_M 
	\lesssim \norm{S(E)\hat z^{n+4}}_A + \tau\, \norm{\tfrac{1}{\tau}S(E)\hat z^{n+4} - \tfrac{1}{\tau}S(E)z^{n+4}}_M
\]
for all $S\in\R[E]$ of degree $\ell$ and $n\geq\ell-4$, which allows us to provide bounds with \Cref{lem:aux_vs_orig_state} without requiring $E\mid S$. In particular, we have $\norm{\hat z^{n+4}}_M\lesssim\norm{\hat z^{n+4}}_A\lesssim\sqrt{\varphi^n}$ and
\[
	\norm{z^{n+4}}_M 
	\lesssim \norm{\hat z^{n+4}}_A + \tau\,\norm{\tfrac{1}{\tau}\hat z^{n+4}-\tfrac{1}{\tau}z^{n+4}}_M
	\lesssim \big(1+\tau\sqrt{h}\big)\big(\sqrt{H_0} + \sqrt{\varphi^n}\big)
\]
for all $n\geq 0$. Thus, \Cref{lem:discreteEnergyStability} yields bounds for $\norm{\hat z^{n+4}}_M$ and $\norm{z^{n+4}}_M$. The resulting estimates for $\norm{\hat z^{n+4}}_{M+A}$ and $\norm{z^{n+4}}_{M+A}$ are summarized as follows.

\begin{theorem}\label{thm:discreteStateStability}
	If $\tfrac{\tau}{\sqrt{h}}$ is sufficiently small, then
	\begin{equation}
		\norm{\hat z^{m+4}}_M \leq \norm{\hat z^{m+4}}_{M+A} \lesssim \sqrt{H_0} + \sum_{n=0}^mb^n
	\end{equation}
	for all $m\geq 0$.
	In particular, for $f_{\wt M}=f_{\wt A}=0$, we have
	\begin{equation}
		\norm{\hat z^{m+4}}_M \leq \norm{\hat z^{m+4}}_{M+A} \lesssim \sqrt{H_0} + \tau\sum_{n=0}^m\, \norm{f_M^{n+4}}_M.
	\end{equation} 
	The same inequalities hold by replacing $\hat z^{m+4}$ with $z^{m+4}$.
\end{theorem}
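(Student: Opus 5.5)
The plan is to combine \Cref{lem:M_to_A_bound} with \Cref{lem:aux_vs_orig_state} and \Cref{lem:discreteEnergyStability}, exactly as outlined in the discussion preceding the statement.

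The first inequality $\norm{\hat z^{m+4}}_M\leq\norm{\hat z^{m+4}}_{M+A}$ is trivial, since $A$ is positive semidefinite. For the second, note that $\norm{x}_{M+A}\leq\norm{x}_M+\norm{x}_A$. Since $\hat z^{m+4}$ satisfies the constraint $M_\lambda\hat u_2^{m+4}=B_\Gamma p^{m+4}$ by construction of $\hat u_2$, \Cref{lem:M_to_A_bound} gives $\norm{\hat z^{m+4}}_M\lesssim\norm{\hat z^{m+4}}_A$. The first item of \Cref{lem:aux_vs_orig_state}, applied with $S=1$ (so $\ell=0$), gives $\norm{\hat z^{m+4}}_A\lesssim\sqrt{\varphi^m}$. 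Finally, \Cref{lem:discreteEnergyStability} yields $\sqrt{\varphi^m}\lesssim\sqrt{H_0}+\sum_{n=0}^m b^n$. In the special case $f_{\wt M}=f_{\wt A}=0$, the quantity $b^n$ reduces to $\tau\norm{f_M^{n+4}}_M$, which gives the second displayed estimate.

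For the original state $z^{m+4}$, the difficulty is that $z^{m+4}$ does not satisfy the constraint exactly, since $u_2^{m+4}=\ddf_0\hat u_2^{m+4}$. So \Cref{lem:M_to_A_bound} cannot be applied directly. I would instead use the triangle inequality
\[
\norm{z^{m+4}}_M\leq\norm{\hat z^{m+4}}_M+\tau\,\norm{\tfrac{1}{\tau}(\hat z^{m+4}-z^{m+4})}_M .
\]
The first term is bounded as above. The second is controlled by the fourth item of \Cref{lem:aux_vs_orig_state} with $S=1$, giving $\tau\sqrt{h}(\sqrt{H_0}+\sqrt{\varphi^m})$. Similarly, the fifth item bounds $\norm{z^{m+4}}_A$ by $(1+\tfrac{\tau}{\sqrt h})(\sqrt{H_0}+\sqrt{\varphi^m})$.

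Since $h,\tau\lesssim1$ and $\tau/\sqrt h$ is small, all prefactors are $O(1)$. The $\sqrt{H_0}$ contributions are absorbed into the right-hand side, and \Cref{lem:discreteEnergyStability} is applied again.

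The only delicate point is the bookkeeping of the case distinctions in \Cref{lem:aux_vs_orig_state}. In particular, for small $m$ one must check that the extra $\sqrt{H_0}$ terms appear additively, which they do, so the constant stays uniform in $m$.
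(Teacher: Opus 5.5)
Your proposal is correct and is essentially the paper's argument. For $\hat z^{m+4}$ you combine \Cref{lem:M_to_A_bound}, the first item of \Cref{lem:aux_vs_orig_state} and \Cref{lem:discreteEnergyStability}; for $z^{m+4}$ you use the triangle inequality $\norm{z^{m+4}}_M\le\norm{\hat z^{m+4}}_M+\tau\norm{\tfrac{1}{\tau}(\hat z^{m+4}-z^{m+4})}_M$ together with the fourth and fifth items of \Cref{lem:aux_vs_orig_state}, exactly as the paper does, and you only spell out the $A$-part for $z^{m+4}$ a little more explicitly.
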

If $f$ depends on time only, then \Cref{thm:discreteEnergyStability,thm:discreteStateStability} give a satisfactory characterization of the stability of the energy and of the state of the discrete system. When $f$ also depends on the state, the inequalities in their present form become less informative. However, when $f$ can be bounded in terms of the norm of the state, we can still deduce weaker stability inequalities.
Let us first define
\begin{equation}
	\tilde b^n 
	\coloneqq b^n - \tau\,\norm[\big]{f_M^{n+4}}_M 
	= \sqrt{h}\tau\,\norm[\big]{f_{\wt M}^{n+4}}_M
	+ \tfrac{\tau}{\sqrt{h}}\, \norm[\big]{f_{\wt A,\Omega}^{n+4}}_{A_\Omega} + \tfrac{1}{\sqrt{h}}\, \norm[\big]{f_{\wt A,\Gamma}^{n+4}}_{M_\Gamma}
\end{equation}
for all $n\geq 0$. Then we have the following result.

\begin{lemma}\label{lem:LipschitzStability}
	Suppose that $f_M$ satisfies
	\begin{equation}\label{eq:f_bounded_condition}
		\norm{f_M(t,z)}_M
		\leq L\, \norm{z}_{M+A} + b_M(t)
	\end{equation}
	for all relevant $t$ and $z$, for some fixed $L\geq 0$ and $b_M\colon [0,T]\to\R$ with $b_M\geq 0$ pointwise, where $L$ and $b_M$ are independent of the discretization parameters.
	Then we obtain for sufficiently small $\tfrac{\tau}{\sqrt{h}}$ the estimate 
	\begin{equation}
		\label{eq:Lem49_estimate}
		\sqrt{\varphi^m} 
		\lesssim e^{CL\tau(m+1)}\pset*{ \sqrt{H_0} + \sum_{n=0}^m\pset[\big]{ \tau b_M(t^{n+4}) + \tilde b^n } }
	\end{equation}
	for all $m\geq 0$.
\end{lemma}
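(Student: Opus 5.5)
The plan is to go back into the proof of \Cref{lem:discreteEnergyStability} before the discrete Grönwall lemma is applied. There we obtained, for sufficiently small $h$ and $\tfrac{\tau}{\sqrt h}$,
\[
	\varphi^n \lesssim H_0 + \tsum_{k=0}^n \sqrt{\varphi^k}\, b^k
	\qquad \text{for all } n\geq 0 .
\]
In this inequality we split $b^k = \tau\norm{f_M^{k+4}}_M + \tilde b^k$ and then use \eqref{eq:f_bounded_condition} to control the first part by the discrete state.

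First we bound the argument of $f_M^{k+4}$. By \eqref{eq:f_eval}, $f_M^{k+4}$ is evaluated at $\ddf_0 z^{k+4}$, with the $u_2$-component given by $P\ddf_0 p^{k+4}$. This vector satisfies the constraint $M_\lambda(\cdot)_2 = B_\Gamma(\cdot)$, so it is exactly $\ddf_0\hat z^{k+4}$. Hence \eqref{eq:f_bounded_condition} gives
\[
	\norm{f_M^{k+4}}_M \leq L\,\norm{\ddf_0 \hat z^{k+4}}_{M+A} + b_M(t^{k+4}).
\]
Here $\ddf_0 = I - E^4$ is a polynomial in $E$ of degree $4$. \Cref{lem:aux_vs_orig_state} together with \Cref{lem:M_to_A_bound}, in the form stated after that lemma, yields $\norm{\ddf_0\hat z^{k+4}}_{M+A}\lesssim \sqrt{H_0}+\sqrt{\varphi^k}$. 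For small $k$ these terms reach back into the initial values, which is why the $H_0$ contribution appears.

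Inserting this bound, we obtain
\[
	\varphi^n \lesssim H_0 + \tsum_{k=0}^n \sqrt{\varphi^k}\Big( CL\tau\big(\sqrt{H_0}+\sqrt{\varphi^k}\big) + \tau b_M(t^{k+4}) + \tilde b^k\Big).
\]
The mixed term $L\tau\sqrt{\varphi^k}\sqrt{H_0}$ is handled by Young's inequality, $\le \tfrac12 L\tau(\varphi^k + H_0)$. Since $\sum_{k\le n} L\tau H_0 \le L T H_0$, that part is absorbed into $H_0$; alternatively it can be kept, since it is eventually dominated by the exponential factor anyway. We are left with
\[
	\varphi^n \lesssim H_0 + CL\tau\tsum_{k=0}^n\varphi^k + \tsum_{k=0}^n\sqrt{\varphi^k}\,\beta^k,
	\qquad \beta^k \coloneqq \tau b_M(t^{k+4})+\tilde b^k .
\]

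\textbf{Main obstacle.} The term $CL\tau\varphi^n$ at $k=n$ appears on the right-hand side. For $\tau$ small (e.g.\ $CL\tau\le \tfrac12$) it can be absorbed into the left-hand side. The remaining inequality, with a linear sum $\sum\tau\varphi^k$ and a square-root sum $\sum\sqrt{\varphi^k}\beta^k$, then calls for a Grönwall variant that handles both simultaneously.

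I would proceed as follows. Set $\Phi^n \coloneqq \max_{k\le n}\varphi^k$. The square-root term is bounded by $\sqrt{\Phi^n}\sum_k\beta^k \le \tfrac12\Phi^n/C' + C'(\sum_k\beta^k)^2$. Because the right-hand side is monotone in $n$, the whole inequality holds for $\Phi^n$. Absorbing the $\tfrac12\Phi^n$ term gives
\[
	\Phi^n \lesssim H_0 + \Big(\tsum_{k\le n}\beta^k\Big)^2 + CL\tau\tsum_{k<n}\Phi^k .
\]
The standard discrete Grönwall lemma then yields $\Phi^n \lesssim e^{2CL\tau(n+1)}\big(H_0+(\sum_k\beta^k)^2\big)$. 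Taking square roots gives \eqref{eq:Lem49_estimate}.
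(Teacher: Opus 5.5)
Your proof is correct, but it takes a longer route than the paper.

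\textbf{The paper's route.} The paper does not reopen the proof of \Cref{lem:discreteEnergyStability}. It uses that lemma's final, already square-rooted conclusion $\sqrt{\varphi^n}\lesssim\sqrt{H_0}+\sum_{k\le n}b^k$ as a black box; this is legitimate because the conclusion holds for arbitrary data $f_M^{k+4}$. The growth condition, together with \Cref{lem:aux_vs_orig_state,lem:M_to_A_bound}, makes $\tau\norm{f_M^{k+4}}_M$ linear in $\sqrt{\varphi^k}$, plus $\sqrt{H_0}$ from the first steps and $\tau b_M(t^{k+4})$. This immediately gives the purely linear inequality
$\sqrt{\varphi^n}\lesssim\sqrt{H_0}+L\tau\sum_{k\le n}\sqrt{\varphi^k}+\sum_{k\le n}(\tau b_M(t^{k+4})+\tilde b^k)$
for $a_n=\sqrt{\varphi^n}$. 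The elementary \Cref{lem:discreteGronwallMinimal} then applies directly, and the $k=n$ term is covered by the requirement $\kappa=CL\tau\le 1-\alpha$, i.e.\ $\tau$ small.

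\textbf{Comparison.} In the paper's route, the obstacle you identify (the mixture of $\sum\tau\varphi^k$ and $\sum\sqrt{\varphi^k}\beta^k$) never arises. Your running-maximum-plus-Young step essentially re-proves inline the square-root Gr\"onwall argument already spent in \Cref{lem:discreteEnergyStability}. What your version buys is self-containedness: you need only the pre-Gr\"onwall inequality and a standard linear Gr\"onwall lemma. The cost is extra length.

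\textbf{Two small remarks.} First, the argument of $f_M^{k+4}$ is $\ddf_0\hat z^{k+4}$, not $\ddf_0 z^{k+4}$; the latter would apply $\ddf_0$ twice in the $u_2$-block. You do end up using the correct vector, so this is only a slip of wording. Second, by \Cref{lem:aux_vs_orig_state} with $\ell=4$, the $\sqrt{H_0}$ contribution to $\norm{\ddf_0\hat z^{k+4}}_A$ only occurs for $k=0,1$. The mixed term therefore costs $\lesssim L\tau H_0$ rather than $LTH_0$, so no dependence on $T$ needs to enter the hidden constant.
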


\begin{proof}
	We have
	\[
	\norm{f_M^{n+4}}_M
	= \norm{f_M(t^{n+4},\ddf_0\hat z^{n+4})}_M
	\leq L\,\norm{\ddf_0\hat z^{n+4}}_{M+A} + b_M(t^{n+4})
	\]
	for all $n\geq 0$. We observe that 
	\[
	\sum_{k=0}^n\, \norm{\ddf_0\hat z^{k+4}}_{M+A}
	\lesssim \sum_{k=0}^n\, \norm{(I-E^4)\hat z^{k+4}}_{A}
	\lesssim \sqrt{H_0} + \sum_{k=0}^n\, \sqrt{\varphi^k}
	\]
%	\begin{align*}
%		\tsum_{k=0}^n\norm{\ddf_0\hat z^{k+4}}_{M+A}
%		&\leq \tsum_{k=0}^n\big(\norm{\hat z^{k+4}}_{M+A} + \norm{E^4\hat z^{k+4}}_{M+A} + \norm{(I-E^4)\hat z^{k+4}}_{M+A}\big) \\
%		&\lesssim \sqrt{H_0} + \tsum_{k=0}^n\sqrt{\varphi^k}
%	\end{align*}
	%
	and, therefore,  
	\[
	\sqrt{\varphi^n} 
	\lesssim \sqrt{H_0} + \sum_{k=0}^nb^k
	\lesssim \sqrt{H_0} + L\tau\, \sum_{k=0}^n\sqrt{\varphi^k}
	+ \sum_{k=0}^n\big(\tau b_M(t^{k+4}) + \tilde b^k\big)
	\]
%	or, equivalently,
%	\[
%	\sqrt{\varphi^n} 
%	\leq CL\,\tau\,\tsum_{k=0}^{n}\sqrt{\varphi^k} + C\Big(\sqrt{H_0} + \tsum_{k=0}^n\big(\tau b_M(t^{k+4}) + \tilde b^k\big)\Big)
%	\]
	due to \Cref{lem:discreteEnergyStability}, for all $n\geq 0$.  
	Then, assuming $\tau$ to be sufficiently small, we can apply \Cref{lem:discreteGronwallMinimal} to obtain estimate~\eqref{eq:Lem49_estimate}. 
%	\[
%	\sqrt{\varphi^n} 
%	\lesssim e^{CL\tau(m+1)}\Big(\sqrt{H_0}+\tsum_{k=0}^m\big(\tau b_M(t^{k+4})+\tilde b^k\big)\Big),
%	\]
%	allowing us to conclude.
\end{proof}

\begin{remark}
	As an example, condition~\eqref{eq:f_bounded_condition} is satisfied for Lipschitz continuous functions. In fact, if $f_M$ satisfies
	\begin{equation}\label{eq:Lipschitz_semidisc}
		\norm{f_M(t,z_1)-f_M(t,z_2)}_M 
		\leq L\, \norm{z_1-z_2}_{M+A}
	\end{equation}
	for all relevant $t,z_1,z_2$, then in particular
	\[
	\norm{f_M(t,z)}_M 
	\leq \norm{f_M(t,z)-f_M(t,0)}_M + \norm{f_M(t,0)}_M 
	\leq L\, \norm{z}_{M+A} + \norm{f_M(t,0)}_M
	\]
	for all $z$. Thus, condition~\eqref{eq:f_bounded_condition} holds with $b_M(t)=\norm{f_M(t,0)}_M$.
	Note that, for the aforementioned default construction with $f_M=M^{-1}f$, the Lipschitz condition \eqref{eq:Lipschitz_semidisc} for the semi-discretized inhomogeneity can be written equivalently as
	\begin{equation}\label{eq:Lipschitz_invNorm}
		\norm{f(t,z_1) - f(t,z_2)}_{M^{-1}} 
		\leq L\, \norm{z_1-z_2}_{M+A}
	\end{equation}
	and is implied by the Lipschitz continuity of the original PDE in the sense that
	% Why corresponds the $M^{-1}$-norm to the $L^2$-norm?
	% Answer: let $f\in L^2(\Omega)$, let $u_1,\ldots,u_n\in L^2(\Omega)$ be the chosen basis for $\mathcal V_h\subseteq L^2(\Omega)$, let $M=[\aset{u_i,u_j}]_{i,j}$ be the mass matrix, let $f_h$ be the semidiscretized version of $f$, let $\tilde f=\sum\tilde f_iu_i$ be the orthogonal projection of $f$ onto $\mathcal V_h$, and let $\tilde f_h=(\tilde f_i)_i\in\R^n$. Then
%		\[
%		f_h
%		= [\aset{u_i,f}_{L^2}]_i
%		= [\aset{u_i,\tilde f}_{L^2}]_i
%		= [\aset{u_i,\tsum_j\tilde f_ju_j}_{L^2}]_i
%		= \tsum_j[\aset{u_i,u_j}_{L^2}]_i\tilde f_j
%		= M\tilde f_h.
%		\]
%		We deduce that
%		\[
%		\norm{f_h}_{M^{-1}}^2
%		= \norm{\tilde f_h}_M^2
%		= \norm{\tsum_i \tilde f_iu_i}_{L^2}^2
%		= \norm{\tilde f}_{L^2}^2
%		\leq \norm{f}_{L^2}^2.
%		\]
%		Do you have a shorter way to explain this?
	%
	\begin{align}
		\label{eq:Lipschitz_PDE}
		\norm{f_\Omega(t,u) - f_\Omega(t,\tilde u)}_{L^2(\Omega)}^2 + \norm{f_\Gamma(t,&p) - f_\Gamma(t,\tilde p)}_{L^2(\Gamma)}^2 \\
		&\leq L^2\big( \norm{u-\tilde u}_{H^1(\Omega)}^2 + \norm{p-\tilde p}_{H^1(\Gamma)}^2 \big) \notag
	\end{align}
	for all relevant $t,u,\tilde u,p,\tilde p$.
\end{remark}

We can then update our energy and state stability results as follows.

\begin{theorem}\label{thm:LipschitzStability}
	Assume $\tfrac{\tau}{\sqrt{h}}$ is sufficiently small, $f_M$ satisfies condition \eqref{eq:f_bounded_condition}, and $f_{\wt M}=f_{\wt A}=0$. Then
	\begin{align}
		& \norm[\big]{\tfrac{1}{\tau}E\hat z^{m+4}}_M^2 + \norm{\hat z^{m+4}}_A^2 
		\lesssim e^{CL\tau(m+1)}\pset*{H_0 + \pset*{ \tau \sum_{n=0}^m b_M(t^{n+4}) }^2}, \\
		& \norm{\hat z^{m+4}}_M \lesssim e^{CL\tau(m+1)}\pset*{\sqrt{H_0} + \tau \sum_{n=0}^m b_M(t^{n+4})}
	\end{align}
	hold for all $m\geq 0$.
	The same inequalities hold by replacing $\hat z^{m+4}$ with $z^{m+4}$.
\end{theorem}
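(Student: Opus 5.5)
The plan is to apply \Cref{lem:LipschitzStability} with $f_{\wt M}=f_{\wt A}=0$ and then translate the resulting bound on $\varphi^m$ into the two claimed inequalities, first for $\hat z$ and then for $z$. With this splitting we have $\tilde b^n=0$ for all $n$, so \eqref{eq:Lem49_estimate} reduces to
\[
\sqrt{\varphi^m}\lesssim e^{CL\tau(m+1)}\Big(\sqrt{H_0}+\tau\sum_{n=0}^m b_M(t^{n+4})\Big).
\]
Squaring this and using $(a+b)^2\le 2a^2+2b^2$ gives $\varphi^m\lesssim e^{2CL\tau(m+1)}\big(H_0+(\tau\sum_n b_M(t^{n+4}))^2\big)$. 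The factor $2$ in the exponent is absorbed into the generic constant $C$.

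For the energy estimate, the definition of $\varphi^m$ already contains $\tfrac16\norm{\tfrac1\tau E\hat z^{m+4}}_M^2$ and $\tfrac14\norm{\hat z^{m+4}}_A^2$. Hence the left-hand side is bounded by $C\varphi^m$, and the first inequality follows.

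For the state estimate, I would apply \Cref{lem:M_to_A_bound} to $\hat z^{m+4}$. This is allowed because $\hat z^{m+4}$ satisfies the constraint $M_\lambda\hat u_2=B_\Gamma p$ by construction. It gives $\norm{\hat z^{m+4}}_M\lesssim\norm{\hat z^{m+4}}_A\lesssim\sqrt{\varphi^m}$, and the unsquared bound above then yields the second inequality.

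To replace $\hat z$ by $z$, I would use \Cref{lem:aux_vs_orig_state}, items (5) and (6), with $S(E)=I$ and $S(E)=E$ respectively. These give
\[
\norm{z^{m+4}}_A\lesssim\big(1+\tfrac{\tau}{\sqrt h}\big)\big(\sqrt{H_0}+\sqrt{\varphi^m}\big),\qquad
\norm{\tfrac1\tau Ez^{m+4}}_M\lesssim\big(1+\sqrt h\big)\big(\sqrt{H_0}+\sqrt{\varphi^m}\big).
\]
The prefactors are $O(1)$ under the standing assumptions that $h$ and $\tau/\sqrt h$ are small. The extra $H_0$ term is harmless, since $e^{CL\tau(m+1)}\ge1$.

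For the $M$-norm of $z$, I would use the bound stated after \Cref{lem:M_to_A_bound}:
\[
\norm{z^{m+4}}_M\lesssim\big(1+\tau\sqrt h\big)\big(\sqrt{H_0}+\sqrt{\varphi^m}\big).
\]
This follows from item (4) of \Cref{lem:aux_vs_orig_state} together with \Cref{lem:M_to_A_bound} applied to $\hat z$.

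The only point requiring care is the handling of the constants. One has to check that the $e^{CL\tau(m+1)}$ factor survives squaring and the passage from $\hat z$ to $z$ without any new dependence on the discretization parameters, and that the hypothesis ``$\tau/\sqrt h$ sufficiently small'' is exactly what \Cref{lem:LipschitzStability} needs, so that no additional smallness of $\tau$ beyond what that lemma already assumes is introduced.
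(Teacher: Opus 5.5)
Your proposal is correct and follows the route the paper intends. The paper gives no separate proof; it gets the result by taking \Cref{lem:LipschitzStability} with $\tilde b^n=0$ and converting the bound on $\sqrt{\varphi^m}$ into the energy and state norms of $\hat z^{m+4}$ and $z^{m+4}$. That conversion uses the definition of $\varphi^m$, \Cref{lem:M_to_A_bound} and \Cref{lem:aux_vs_orig_state}, exactly as for \Cref{thm:discreteEnergyStability,thm:discreteStateStability}.
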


\begin{remark}\label{rem:discreteStateStability}
	The results in this subsection are all based on \Cref{lem:M_to_A_bound}, which does not in general hold for the extended model \eqref{eq:semidisc_ext}.
	In fact, we only have $M_\Gamma\leq A_\Gamma$ because the term $-\Delta_\Gamma p+p$ is involved in the construction of $A_\Gamma$.
	
	In the context of the extended model, the inequality $M_\Gamma\lesssim A_\Gamma$ corresponds to $\kappa_\Gamma>0$, while $\beta_\Omega>0$ is necessary to deduce $\norm{\nabla u_h}_{L^2(\Omega_h)}\lesssim\norm{u}_{A_\Omega}$.
	A similar reasoning would also allow us to prove \Cref{lem:M_to_A_bound} if we replaced $\beta_\Omega>0$ with $\kappa_\Omega>0$.
	In the other cases, a different argument has to be used to deduce appropriate state stability bounds. 
	
	Concerning the energy stability, by observing that
	\[
		\norm{\hat z^{m+4}}_M 
		\leq \norm{z^3}_M + \tau\,\sum_{n=0}^m\,\norm[\big]{\tfrac{1}{\tau}E\hat z^{n+4}}_M
		\lesssim \norm{z^3}_M + \tau\,\sum_{n=0}^m\sqrt{\varphi^n},
	\]
	we conclude that
	\begin{equation}
		\norm{\hat z^{m+4}}_M 
		\lesssim \norm{z^3}_M + (m+1)\, \tau\, \big( \sqrt{H_0} + \tsum_{n=0}^m\norm{f_M^{n+4}} \big)
	\end{equation}
	for all $m\geq 0$. Hence, $\hat z^{m+4}$ is controlled by a similar term as in the model problem, but in a way that scales linearly with the length of the time interval.
	By applying \Cref{lem:aux_vs_orig_state}, the same inequality can be extended to $z^{m+4}$.
	
	To adapt \Cref{lem:LipschitzStability} and \Cref{thm:LipschitzStability} to the extended model, one can introduce modified quantities
	\begin{equation}
		H_1 
		\coloneqq \tsum_{n=1}^3\norm[\big]{\tfrac{1}{\tau}Ez^n}_M^2 + \tsum_{n=0}^4\norm{z^n}_{M+A}^2 \geq H_0
	\end{equation}
	and
	\begin{multline}
		\psi^n 
		\coloneqq \tfrac{1}{6}\,\norm[\big]{\tfrac{1}{\tau}E\hat z^{n+4}}_{M}^2
		+ \tfrac{1}{3}\,\norm[\big]{\tfrac{1}{\tau}(E+\tfrac{3}{2}E^2)\hat z^{n+4}}_{M}^2
		+ \tfrac{1}{4}\,\norm{\hat z^{n+4}}_{M+A}^2 \\
		+ \tfrac{1}{4}\,\norm{(I+E)\hat z^{n+4}}_{M+A}^2
		+ \tsum_{k=0}^n \pset[\big]{ \tfrac{3}{4}\,\norm[\big]{\tfrac{1}{\tau}E^3\hat z^{k+4}}_{M}^2 + \tfrac{1}{4}\,\norm{E^2\hat z^{k+4}}_{M+A}^2 } \geq \varphi^n.
	\end{multline}
	Then, under condition \eqref{eq:f_bounded_condition}, it can be analogously proven that
	\begin{equation}
		\sqrt{\psi^m} \lesssim e^{C(L+1)\tau(m+1)}\pset*{ \sqrt{H_1} + \sum_{n=0}^m\pset[\big]{ \tau b_M(t^{n+4}) + \tilde b^n } }.
	\end{equation}
	From this, analogous bounds as in \Cref{thm:LipschitzStability} follow with the important difference that the exponential term does not vanish for $L=0$.
\end{remark}

\subsection{Convergence}

We start by observing that the error of the auxiliary state can be interpreted as the solution of the same discrete system with a modified inhomogeneity.

\begin{lemma}\label{lem:error_solutions}
	Let $(\hat e_z^n,e_\lambda^n)=(\hat z^n-z(t^n),\lambda^n-\lambda(t^n))$ for $n\geq 0$ denote the error of the auxiliary discrete solution. Then $(\hat e_z^n,e_\lambda^n)$ is the auxiliary discrete solution of the discrete system \eqref{eq:multistep_aux}, where the inhomogeneity $f$ is replaced by $Mf_M+\wt Mf_{\wt M}+\wt Af_{\wt A}$ with $f_{\wt M}(t,e_z) \coloneqq \tfrac{2}{\tau^2}E^4z(t)$, $f_{\wt A}(t,e_z) \coloneqq E^4z(t)$, and 
	\begin{equation}
		f_M(t,e_z) \coloneqq M^{-1}\Big( f\pset[\big]{t,e_z + \ddf_0z(t)} - f\pset[\big]{t,z(t)} \Big) + \ddot z(t) - \bdf_2z(t).
	\end{equation}
	Therein, $z(t)=0$ for $t<0$ and the initial condition is replaced by $\hat e_z^n=e_z^n=z^n-z(t^n)$ for $n=0,\ldots,3$.
	Furthermore, the corresponding non-auxiliary solution $e_z^n$ satisfies
	\begin{equation}\label{eq:error_discrepancy}
		e_z^{n+4} = z^{n+4} - z(t^{n+4}) + \bsmat{0 \\ E^4u_2(t^{n+4}) \\ 0}
	\end{equation}
	for all $n\geq 0$.
\end{lemma}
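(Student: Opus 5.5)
The plan is a direct algebraic verification: subtract the equations satisfied by the exact semi-discrete solution from the auxiliary form \eqref{eq:multistep_aux} of the scheme. The exact solution $(z,\lambda)$ of \eqref{eq:semidiscconcise} satisfies $M\ddot z(t^{n+4}) + Az(t^{n+4}) + B^\top\lambda(t^{n+4}) = f(t^{n+4},z(t^{n+4}))$ and $Bz(t^{n+4})=0$. We rewrite this to look like the scheme. Adding $M\bdf_2 z(t^{n+4})$ to both sides gives
\[
M\bdf_2 z(t^{n+4}) + Az(t^{n+4}) + B^\top\lambda(t^{n+4}) = f(t^{n+4},z(t^{n+4})) + M\big(\bdf_2 z(t^{n+4}) - \ddot z(t^{n+4})\big).
\]
Subtracting this from \eqref{eq:multistep_aux:1} shows that the residual on the right-hand side of the error equation consists of three pieces. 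The first is $f^{n+4} - f(t^{n+4},z(t^{n+4}))$. The second is $M(\ddot z - \bdf_2 z)(t^{n+4})$. The third is $E^4\big(\tfrac{2}{\tau^2}\wt M + \wt A\big)(\hat z^{n+4} - z(t^{n+4}))$ plus the compensating term $E^4\big(\tfrac{2}{\tau^2}\wt M + \wt A\big)z(t^{n+4})$. The constraint part is immediate, since $B\hat e_z^{n+4} = B\hat z^{n+4} - Bz(t^{n+4}) = 0$.

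The next step is to identify each residual piece with the corresponding term of the modified inhomogeneity. The $E^4$ term is linear in the state. Its part acting on $\hat e_z$ is therefore exactly the perturbation term of \eqref{eq:multistep_aux:1} for the error sequence. The remaining part is $\wt M\cdot\tfrac{2}{\tau^2}E^4z(t^{n+4}) + \wt A\, E^4 z(t^{n+4})$, which matches $f_{\wt M}$ and $f_{\wt A}$.

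For the nonlinear term, recall from \eqref{eq:f_eval} that $f^{n+4}$ is evaluated at $\ddf_0\hat z^{n+4}$. Indeed, $\ddf_0 p^{n+4}$ is used on the surface, while $(\ddf_0 u_1^{n+4}, P\ddf_0 p^{n+4}) = \ddf_0\hat u^{n+4}$ is used in the bulk. Writing $\ddf_0\hat z^{n+4} = \ddf_0\hat e_z^{n+4} + \ddf_0 z(t^{n+4})$ turns the nonlinear residual into $f\big(t^{n+4}, \ddf_0\hat e_z^{n+4} + \ddf_0 z(t^{n+4})\big) - f(t^{n+4},z(t^{n+4}))$. This is $M f_M$ evaluated, in the sense of \eqref{eq:f_eval}, at the error sequence. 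Here $f_M$ is read with its state argument replaced by $\ddf_0\hat e_z$, consistent with how the scheme evaluates inhomogeneities, and the consistency error $\ddot z - \bdf_2 z$ is absorbed into it.

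The consistency of the split must also be checked. The requirement $f_{\wt M,2} = Pf_{\wt M,\Gamma}$ holds because $Bz(t)=0$ gives $u_2(t) = Pp(t)$ for all $t$, and hence $E^4u_2 = PE^4p$. The same argument gives $f_{\wt A,2}=Pf_{\wt A,\Gamma}$.

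The main obstacle I expect is the bookkeeping at the start of the time grid. The shifts reach back to indices $t^{n}$ with $n<0$, and the exact solution is extended by $z(t)=0$ for $t<0$. Only $n\ge 0$ enters, so the formulas $\bdf_2$, $\ddf_0$ and $E^4$ at step $n+4$ use $t^{n},\dots,t^{n+4}$ with $n\ge0$, and the extension is never actually activated; I will verify this explicitly. The initial values $\hat e_z^n = e_z^n$ for $n\le 3$ follow from consistency, since $\hat z^n=z^n$ there.

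Finally, for \eqref{eq:error_discrepancy}, note that $z^{n+4}$ and $\hat z^{n+4}$ differ only in the second bulk block: $u_2^{n+4} = \ddf_0\hat u_2^{n+4} = \hat u_2^{n+4} - E^4\hat u_2^{n+4}$. The non-auxiliary error of the error system is obtained by the same rule applied to $\hat e_z$. This gives $e_{u_2}^{n+4} = \hat e_{u_2}^{n+4} - E^4\hat e_{u_2}^{n+4}$. Substituting $\hat e_{u_2} = \hat u_2 - u_2(t)$ then yields $u_2^{n+4} - u_2(t^{n+4}) + E^4u_2(t^{n+4})$, while the other blocks coincide. This is exactly \eqref{eq:error_discrepancy}.
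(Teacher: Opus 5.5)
Your proposal is correct and follows the paper's proof essentially step by step. You subtract the semi-discrete equations from \eqref{eq:multistep_aux}, split $E^4\big(\tfrac{2}{\tau^2}\wt M+\wt A\big)\hat z^{n+4}$ into its action on $\hat e_z^{n+4}$ and on $z(t^{n+4})$, absorb $f(t^{n+4},\ddf_0\hat z^{n+4})-f(t^{n+4},z(t^{n+4}))$ and the $\bdf_2$ consistency error into $f_M$, check the initial data, and read off \eqref{eq:error_discrepancy} from $e_{u,2}^{n+4}=\ddf_0\hat e_{u,2}^{n+4}$. Your additional checks are also correct and make explicit what the paper leaves implicit: that $f_{\wt M,2}=Pf_{\wt M,\Gamma}$ and $f_{\wt A,2}=Pf_{\wt A,\Gamma}$ hold via $u_2(t)=Pp(t)$, and that the extension $z(t)=0$ for $t<0$ is never used at steps $n+4$ with $n\ge 0$.
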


\begin{proof}
	By combining \eqref{eq:semidiscconcise} and \eqref{eq:multistep_aux}, we obtain
	\begin{multline*}
		M\bdf_2\big(\hat z^{n+4}-z(t^{n+4})\big) + A\big(\hat z^{n+4}-z(t^{n+4})\big) + B^\top\big(\lambda^{n+4}-\lambda(t^{n+4})\big)
		\\
		= f\pset{ t^{n+4} , \ddf_0\hat z^{n+4} }
		- f\pset[\big]{ t^{n+4} , z(t^{n+4}) }
		+ M\big( \ddot z(t^{n+4}) - \bdf_2z(t^{n+4}) \big)
		\\
		+ E^4\pset*{ \tfrac{2}{\tau^2}\wt M + \wt A }z(t^{n+4})
		+ E^4\pset*{ \tfrac{2}{\tau^2}\wt M + \wt A }\big(\hat z^{n+4}-z(t^{n+4})\big)
	\end{multline*}
	and $B(\hat z^{n+4}-z(t^{n+4}))=0$ for all $n\geq 0$, which is equivalent to \eqref{eq:multistep_aux} where $\hat z$ is replaced by $\hat e_z$ and $f$ by $Mf_M+\wt Mf_{\wt M}+\wt Af_{\wt A}$ as defined in the statement of this lemma.
	
	Since for the initial condition of the original system we have $\hat z^n=z^n$ and $M_\lambda u_2^n=B_\Gamma p^n$ for $n=0,\ldots,3$, for the error we have
	\[
	e_z^n = \hat e_z^n = \hat z^n - z(t^n) = z^n - z(t^n)
	\]
	for the same values of $n$ and, therefore,
	\[
	M_\lambda e_{u,2}^n = M_\lambda u_2^n - M_\lambda u_2(t^n) = B_\Gamma p^n - B_\Gamma p(t^n)
	= B_\Gamma e_p^n,
	\]
	where $e_z^n=(e_{u,1}^n,e_{u,2}^n,e_p^n)$, so the assumptions for the initial condition also hold in this case.
	Finally, we observe that
	\[
	e_z^{n+4}
	= \bsmat{e_{u,1}^{n+4} \\ \ddf_0\hat e_{u,2}^{n+4} \\ e_p^{n+4}}
	= \bsmat{u_1^{n+4} \\ \ddf_0\hat u_2^{n+4} \\ p^{n+4}}
	- \bsmat{u_1(t^{n+4}) \\ (I-E^4)u_2(t^{n+4}) \\ p(t^{n+4})}
	= z^{n+4} - z(t^{n+4}) + \bsmat{0 \\ E^4u_2(t^{n+4}) \\ 0}
	\]
	for all $n\geq 0$.
\end{proof}

Note that \eqref{eq:error_discrepancy} shows that, while in general $e_z^{n+4}\neq z^{n+4}-z(t^{n+4})$ for $n\geq 0$, these two quantities are quite close. In fact,
\begin{multline}
	\norm[\big]{e_z^{n+4} - \big(z^{n+4}-z(t^{n+4})\big)}_{M+A}
	= \norm[\big]{E^4u_2(t^{n+4})}_{M_{22}+A_{22}}
	\\
	\lesssim \big(\sqrt{h}+\tfrac{1}{\sqrt{h}}\big) \norm[\big]{E^4p(t^{n+4})}_{M_\Gamma}
	\leq \tfrac{\tau}{\sqrt{h}}(1+h)\,\norm[\big]{\tfrac{1}{\tau}E^4p(t^{n+4})}_{M_\Gamma}.
\end{multline}
Hence, we easily deduce the following statement.

\begin{corollary}\label{cor:error_of_error}
	Let $S\in\R[E]$ be a polynomial whose degree $\ell\geq 0$ is independent of the discretization parameters.
	If $\tfrac{\tau}{\sqrt{h}}$ is sufficiently small and the solution of the semi-discretized system \eqref{eq:semidisc:kinetic} satisfies $p\in W^{3,\infty}([0,T],\R^{N_\Gamma})$ with respect to the $M_\Gamma$-norm, then
	\begin{subequations}
	\begin{align}
		\norm[\big]{\tfrac{1}{\tau}S(E)\big(e_z^{n+4} - (z^{n+4}-z(t^{n+4}))\big)}_{M} 
		&\lesssim \tau^2\, \norm{p^{(3)}}_{L^\infty(t^{n-\ell},t^{n+4}),M_\Gamma}, \\
		\norm[\big]{S(E)\big(e_z^{n+4} - (z^{n+4}-z(t^{n+4}))\big)}_{M+A} &\lesssim \tau^2\, \norm{p^{(3)}}_{L^\infty(t^{n-\ell},t^{n+4}),M_\Gamma}
	\end{align}
	\end{subequations}
	for all $n\geq\ell-4$, where we set for simplicity $p^{(3)}(t)=0$ for $t<0$.
\end{corollary}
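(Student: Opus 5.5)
By \eqref{eq:error_discrepancy}, the quantity in question is
\[
	d^{n+4} \coloneqq e_z^{n+4} - \big(z^{n+4}-z(t^{n+4})\big) = \bsmat{0 \\ E^4u_2(t^{n+4}) \\ 0}
\]
for $n\geq 0$. For $n<0$ both $e_z^{n+4}$ and $z^{n+4}-z(t^{n+4})$ coincide by construction (consistent initial data), so $d^{k}=0$ for $k\leq 3$. The plan is therefore to bound $S(E)d^{n+4}$ by a finite linear combination of $d^{k+4}$, $k=n-\ell,\dots,n$, and to bound each of these individually. As in the proof of \Cref{lem:aux_vs_orig_state}, it suffices to treat $S(E)=E^\ell$, since $\ell$ is fixed and the general case follows from the triangle inequality. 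Writing $E^\ell d^{n+4}=\sum_{j=0}^{\ell}\binom{\ell}{j}(-1)^j d^{n+4-j}$, we obtain $\norm{E^\ell d^{n+4}}\lesssim\max_{n-\ell\leq k\leq n}\norm{d^{k+4}}$ in either norm, and the terms with $k<0$ vanish.

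For a single term, we use the constraint. Since $z(t)$ solves \eqref{eq:semidisc:kinetic}, it satisfies $M_\lambda u_2(t)=B_\Gamma p(t)$, so $E^4u_2(t^{k+4})$ and $E^4p(t^{k+4})$ satisfy the hypothesis of \Cref{lem:Christoph}. This gives
\[
	\norm{d^{k+4}}_{M}\lesssim\sqrt h\,\norm{E^4p(t^{k+4})}_{M_\Gamma},\qquad
	\norm{d^{k+4}}_{M+A}\lesssim\big(\sqrt h+\tfrac1{\sqrt h}\big)\norm{E^4p(t^{k+4})}_{M_\Gamma},
\]
as in the display preceding the corollary. The key step is a Taylor estimate for the fourth backward difference, with the convention $p(t)=0$ for $t<0$. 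This is the main obstacle. For $k\geq 0$ all four points lie in $[0,T]$ and $\norm{E^4p(t^{k+4})}_{M_\Gamma}\leq\tau^4\norm{p^{(4)}}_{L^\infty}$, but we only have $W^{3,\infty}$ regularity. So I would instead write $E^4p=E\,E^3p$ and bound $\norm{E^3p(t)}_{M_\Gamma}\leq\tau^3\norm{p^{(3)}}_{L^\infty(t-3\tau,t),M_\Gamma}$ via the Peano kernel (the B-spline representation of divided differences). This yields $\norm{E^4p(t^{k+4})}_{M_\Gamma}\leq 2\tau^3\norm{p^{(3)}}_{L^\infty(t^{k},t^{k+4}),M_\Gamma}$.

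With the CFL condition $\tau\lesssim\sqrt h$, i.e.\ $\tfrac{\tau}{\sqrt h}\lesssim1$, and $h\lesssim 1$, we combine the pieces. In the $(M+A)$-norm,
\[
	\big(\sqrt h+\tfrac1{\sqrt h}\big)\tau^3 = \tau^2\cdot\tfrac{\tau}{\sqrt h}(1+h)\lesssim\tau^2 .
\]
In the scaled $M$-norm, $\tfrac1\tau\sqrt h\,\tau^3=\sqrt h\,\tau^2\lesssim\tau^2$. Taking the maximum over $k\in[n-\ell,n]$ collects the intervals into $(t^{n-\ell},t^{n+4})$, which proves both estimates for all $n\geq\ell-4$.

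One subtlety is the window near $t=0$, where the convention $z(t)=0$ for $t<0$ makes $p$ discontinuous. There I would check that only $k\geq 0$ contributes, because $d^{k+4}=0$ for $k<0$ by \eqref{eq:error_discrepancy} and the consistent initial data. For $k\geq0$ the points $t^{k},\dots,t^{k+4}$ all lie in $[0,T]$, so the Taylor argument applies without touching negative times.
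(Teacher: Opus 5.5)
Your proposal is correct and follows essentially the same route as the paper: reduce to $S(E)=E^\ell$, observe that only the indices $k\geq 0$ contribute, apply \Cref{lem:Christoph} to $E^4u_2(t^{k+4})$, split $E^4p=E\,E^3p$, and bound $E^3p$ by $\tau^3\norm{p^{(3)}}_{L^\infty}$. Your Peano-kernel estimate is exactly \Cref{lem:Ek_Lp_error} with $k=3$, $p=\infty$, and the factors $\sqrt h\,\tau^2$ and $\tfrac{\tau}{\sqrt h}\tau^2$ are then absorbed using $h\lesssim1$ and the CFL condition, as in the paper.
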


\begin{proof}
	Due to the triangle inequality, it is sufficient to prove the statement for $S(E)=E^\ell$ with $\ell\geq 0$. Then we have
	\begin{align*}
		\norm[\big]{\tfrac{1}{\tau}E^\ell&\big(e_z^{n+4} - (z^{n+4}-z(t^{n+4}))\big)}_{M}
		\lesssim \tfrac{1}{\tau}\, \tsum_{k=n-\ell}^n\norm[\big]{e_z^{k+4} - z^{k+4} + z(t^{k+4})}_{M}
		\\
		&\lesssim \tfrac{\sqrt{h}}{\tau}\, \tsum_{k=\max(0,n-\ell)}^n\norm{E^4p(t^{k+4})}_{M_\Gamma}
		\lesssim \tfrac{\sqrt{h}}{\tau}\,\tsum_{k=\max(-1,n-\ell-1)}^n\norm{E^3p(t^{k+4})}_{M_\Gamma}
		\\
		&\lesssim \tau^2\sqrt{h}\,\tsum_{k=\max(-1,n-\ell-1)}^n\norm{p^{(3)}}_{L^\infty(t^{k+1},t^{k+4}),M_\Gamma}
		\lesssim \tau^2\,\norm{p^{(3)}}_{L^\infty(t^{n-\ell},t^{n+4}),M_\Gamma}
	\end{align*}
	and, analogously,
	\begin{align*}
		\norm[\big]{E^\ell\big(e_z^{n+4}& - (z^{n+4}-z(t^{n+4}))\big)}_{M+A}
		\lesssim \tsum_{k=n-\ell}^n\norm[\big]{e_z^{k+4} - z^{k+4} + z(t^{k+4})}_{A}
		\\
		&\lesssim \tfrac{1}{\sqrt{h}}\,\tsum_{k=\max(0,n-\ell)}^n\norm{E^4p(t^{k+4})}_{M_\Gamma}
		\lesssim \tfrac{1}{\tau}\tfrac{\tau}{\sqrt{h}}\,\tsum_{k=\max(-1,n-\ell-1)}^n\norm{E^3p(t^{k+4})}_{M_\Gamma}
		\\
		&\lesssim \tau^2\,\tsum_{k=\max(-1,n-\ell-1)}^n\norm{p^{(3)}}_{L^\infty(t^{k+1},t^{k+4}),M_\Gamma}
		\lesssim \tau^2\,\norm{p^{(3)}}_{L^\infty(t^{n-\ell},t^{n+4}),M_\Gamma}
	\end{align*}
	for all $n\geq 0$,
	where we applied \Cref{lem:Ek_Lp_error}
\end{proof}

We observe that, as long as the solution $z=(u,p)$ of \eqref{eq:semidisc:kinetic} is sufficiently smooth, the terms on the inhomogeneity defined in \Cref{lem:error_solutions} satisfy
\begin{subequations}\label{eq:f_error_bound}
	\begin{align}
		\norm{\ddot z(t^{n+4})-\bdf_2z(t^{n+4})}_M
		&\lesssim \tau\,\norm{z^{(4)}}_{L^1(t^{n+1},t^{n+4}),M}, \\
		\norm{f_{\wt M}(t^{n+4},e_z)}_M = \tfrac{2}{\tau^2}\norm{E^4z(t^{n+4})}_M
		&\lesssim \tau\,\norm{z^{(4)}}_{L^1(t^n,t^{n+4}),M}, \\
		\norm{f_{\wt A,\Omega}(t^{n+4},e_z)}_{A_\Omega} = \norm{E^4u(t^{n+4})}_{A_\Omega}
		&\lesssim \tau^3\,\norm{u^{(4)}}_{L^1(t^n,t^{n+4}),A_\Omega}, \\
		\norm{f_{\wt A,\Gamma}(t^{n+4},e_z)}_{M_\Gamma} = \norm{E^4z(t^{n+4})}_{M_\Gamma}
		&\lesssim \tau^3\,\norm{p^{(4)}}_{L^1(t^n,t^{n+4}),M_\Gamma},
	\end{align}
\end{subequations}
again due to \Cref{lem:Ek_Lp_error}.
We finally present a convergence result for the state of the discrete system.
\begin{theorem}\label{thm:discreteStateConvergence}
	Suppose that $f$ satisfies the Lipschitz condition \eqref{eq:Lipschitz_invNorm} and that the solution of \eqref{eq:semidisc:kinetic} satisfies $z\in W^{4,1}([0,T],\R^{N_\Omega+N_\Gamma})$ with respect to the $M+A$-norm.
	Furthermore, assume that $\tfrac{\tau}{\sqrt{h}}$ is sufficiently small and that the initial energy of the error satisfies
	\begin{equation}
	e_{H_0} 
	\coloneqq \tsum_{n=1}^3 \norm[\big]{\tfrac{1}{\tau}Ee_z^n}_M^2 + \tsum_{n=0}^3\norm{e_z^n}_A^2 \lesssim \tau^4.
	\end{equation}
	Then we have 
	\begin{equation}
		\norm[\big]{\bdf_1\hat z^{m+4}-\dot z(t^{m+4})}_M + \norm[\big]{\hat z^{m+4}-z(t^{m+4})}_{M+A} 
		\lesssim \tau^2e^{CL\tau(m+1)} \big(1+\norm{z}_{W^{4,1},M+A}\big)
	\end{equation}
	for all $m\ge 0$, where we use the norm $\norm{z}_{W^{4,1},M+A} = \sum_{j=0}^4 \norm{z^{(j)}}_{L^1,M+A}$.  
\end{theorem}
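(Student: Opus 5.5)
The plan is to apply the stability machinery to the error system of \Cref{lem:error_solutions}. By that lemma, $(\hat e_z^n,e_\lambda^n)$ is the auxiliary solution of \eqref{eq:multistep_aux} with inhomogeneity $Mf_M+\wt Mf_{\wt M}+\wt Af_{\wt A}$, with consistent initial data $e_z^n$ for $n\le 3$ and initial energy $e_{H_0}$. Let $\varphi_e^n$ denote the analogue of $\varphi^n$ built from $\hat e_z$. I would invoke \Cref{lem:LipschitzStability} for this error system, which gives
\[
\sqrt{\varphi_e^m}\lesssim e^{CL\tau(m+1)}\Big(\sqrt{e_{H_0}}+\tsum_{n=0}^m\big(\tau b_M(t^{n+4})+\tilde b^n\big)\Big).
\]
The stated quantity is then controlled by $\sqrt{\varphi_e^m}$. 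First, $\bdf_1\hat e_z^{m+4}=\tfrac1\tau(E+\tfrac12E^2)\hat e_z^{m+4}$, so item 2 of \Cref{lem:aux_vs_orig_state} bounds its $M$-norm. Second, $\|\hat e_z^{m+4}\|_{M+A}\lesssim\|\hat e_z^{m+4}\|_A\lesssim\sqrt{\varphi_e^m}$ by \Cref{lem:M_to_A_bound}, which applies because $B\hat e_z=0$. Any $H_0$ terms appearing in these bounds are replaced by $e_{H_0}\lesssim\tau^4$, so $\sqrt{e_{H_0}}\lesssim\tau^2$.

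Next I verify condition \eqref{eq:f_bounded_condition} for the error inhomogeneity. Write
\[
f_M(t,e)=M^{-1}\big(f(t,e+\ddf_0z)-f(t,z)\big)+(\ddot z-\bdf_2z).
\]
By \eqref{eq:Lipschitz_invNorm} the first part has $M$-norm at most $L\|e\|_{M+A}+L\|\ddf_0z(t)-z(t)\|_{M+A}$. Since $\ddf_0z-z=-E^4z$, \Cref{lem:Ek_Lp_error} gives $\|E^4z(t^{n+4})\|_{M+A}\lesssim\tau^3\|z^{(4)}\|_{L^1(t^n,t^{n+4}),M+A}$. Hence $b_M(t^{n+4})\lesssim L\tau^3\|z^{(4)}\|_{L^1(t^n,t^{n+4})}+\tau\|z^{(4)}\|_{L^1(t^{n+1},t^{n+4}),M}$, the second term coming from \eqref{eq:f_error_bound}. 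Summing $\tau b_M$ over $n$, each subinterval is covered at most four times, so the total is $\lesssim\tau^2\|z\|_{W^{4,1},M+A}$, with the extra $\tau^2$ from the first part absorbed via $\tau\lesssim1$.

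For $\tilde b^n$ I use \eqref{eq:f_error_bound} directly:
\[
\sqrt h\,\tau\cdot\tau\|z^{(4)}\|_{L^1}+\tfrac{\tau}{\sqrt h}\tau^3\|u^{(4)}\|_{L^1,A_\Omega}+\tfrac1{\sqrt h}\tau^3\|p^{(4)}\|_{L^1,M_\Gamma}.
\]
Summing over $n$ and using $\tau/\sqrt h\lesssim1$ and $h\lesssim1$ gives $\lesssim\tau^2\|z\|_{W^{4,1}}$.

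The main obstacle is a subtlety in the last term: $f_{\wt A,\Gamma}$ enters $\tilde b^n$ with weight $1/\sqrt h$ rather than $\tau/\sqrt h$. So I need $\tau^3/\sqrt h\lesssim\tau^2$, which holds exactly by the CFL assumption; this is fine. A second point to check is that the split $f_{\wt M,2}=Pf_{\wt M,\Gamma}$ required by the framework holds for $E^4z(t)$. It does because $u_2(t)=Pp(t)$ on the exact solution. Combining all the pieces yields the claimed bound $\tau^2e^{CL\tau(m+1)}(1+\|z\|_{W^{4,1},M+A})$.
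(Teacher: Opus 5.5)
Your treatment of the error system follows the paper's proof step by step. You use \Cref{lem:error_solutions} and the choice $b_M(t)=L\norm{E^4z(t)}_{M+A}+\norm{\ddot z(t)-\bdf_2z(t)}_M$ in \eqref{eq:f_bounded_condition}. You sum $\tau b_M(t^{n+4})+\tilde b^n$ via \eqref{eq:f_error_bound} and \Cref{lem:Ek_Lp_error}, and you correctly observe that the $f_{\wt A,\Gamma}$ term needs $\tau^3/\sqrt h\lesssim\tau^2$. You then apply \Cref{lem:LipschitzStability} and convert $\sqrt{\varphi_e^m}$ into norms with \Cref{lem:aux_vs_orig_state,lem:M_to_A_bound}. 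This correctly yields
\[
\norm{\bdf_1\hat e_z^{m+4}}_M+\norm{\hat e_z^{m+4}}_{M+A}\lesssim\tau^2e^{CL\tau(m+1)}\big(1+\norm{z^{(4)}}_{L^1,M+A}\big),
\]
and the second summand is exactly the second term of the statement.

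The gap is the sentence ``the stated quantity is then controlled by $\sqrt{\varphi_e^m}$''. The first term of the statement is $\bdf_1\hat z^{m+4}-\dot z(t^{m+4})$, whereas $\bdf_1\hat e_z^{m+4}=\bdf_1\hat z^{m+4}-\bdf_1z(t^{m+4})$. They differ by the consistency error $\bdf_1z(t^{m+4})-\dot z(t^{m+4})$ of the difference formula itself. This term depends only on the exact solution and never enters the error system, which sees only the $\bdf_2$ defect, so it is not bounded by $\varphi_e^m$.

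You must split it off with the triangle inequality, as the paper does, and estimate it separately. \Cref{lem:BDF1_L2_error} with $p=\infty$ gives $\norm{\bdf_1z(t^{m+4})-\dot z(t^{m+4})}_M\lesssim\tau^2\norm{z^{(3)}}_{L^\infty,M}$. The one-dimensional embedding $W^{1,1}(0,T)\hookrightarrow L^\infty(0,T)$ then gives $\norm{z^{(3)}}_{L^\infty,M}\lesssim\norm{z^{(3)}}_{L^1,M}+\norm{z^{(4)}}_{L^1,M}\le\norm{z}_{W^{4,1},M+A}$, with a constant depending on $T$.

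This is precisely where the full $W^{4,1}$ norm in the statement comes from. Every term in your argument involves only $z^{(4)}$, which is a symptom of the missing piece. With this addition and $e^{CL\tau(m+1)}\ge1$, your proof is complete.
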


\begin{proof}
	Let $f_M,f_{\wt M},f_{\wt A}$ be defined as in \Cref{lem:error_solutions}.
	In particular,
	\begin{align*}
		\norm{f_M(t,e_z)}_M &\leq \norm{f(t,e_z+\ddf_0z(t))-f(t,z(t))}_{M^{-1}} + \norm{\ddot z(t)-\bdf_2z(t)}_M
		\\
		&\leq L\,\norm{e_z-E^4z(t)}_{M+A} + \norm{\ddot z(t)-\bdf_2z(t)}_M
		\\
		&\leq L\,\norm{e_z}_{M+A} + L\,\norm{E^4z(t)}_{M+A} + \norm{\ddot z(t)-\bdf_2z(t)}_M
	\end{align*}
	for all relevant $t,e_z$. Thus $f_M$ satisfies condition \eqref{eq:f_bounded_condition} with
	\[
		b_M(t) 
		= L\,\norm{E^4z(t)}_{M+A} + \norm{\ddot z(t)-\bdf_2z(t)}_M.
	\]
	Due to \eqref{eq:f_error_bound} and \Cref{lem:Ek_Lp_error}, we have then
	\[
	\sum_{n=0}^m\big(\tau b_M(t^{n+4}) + \tilde b^n\big)
	\lesssim \tau^2 \sum_{n=0}^m\, \norm{z^{(4)}}_{L^1(t^n,t^{n+4}),M+A}
	\lesssim \tau^2\, \norm{z^{(4)}}_{L^1(0,T),M+A}.
	\]
%	\[
%	\sum_{n=0}^mb^n
%	\lesssim \tau^2 \sum_{n=0}^m \norm{z^{(4)}}_{L^1(t^n,t^{n+4}),M+A}
%	\lesssim \tau^2\norm{z^{(4)}}_{L^1(0,T),M+A}.
%	\]
	Then, by applying \Cref{lem:aux_vs_orig_state,lem:M_to_A_bound,lem:LipschitzStability}, we obtain that
	\begin{align*}
		\norm{\bdf_1\hat e_z^{m+4}}_M + \norm{\hat e_z^{m+4}}_{M+A}
		&\lesssim e^{CL\tau(m+1)}\big(\sqrt{e_{H_0}} + \tau^2\norm{z^{(4)}}_{L^1,M+A}\big) \\
		&\lesssim \tau^2e^{CL\tau(m+1)} \big(1+\norm{z^{(4)}}_{L^1,M+A}\big).
	\end{align*}
	Thus,
	\begin{align*}
		&\norm{\bdf_1\hat z^{m+4}-\dot z(t^{m+4})}_M + \norm{\hat z^{m+4}-z(t^{m+4})}_{M+A}
		\\
		&\qquad \leq \norm{\bdf_1z(t^{m+4})-\dot z(t^{m+4})}_M + \norm{\bdf_1\hat e_z^{m+4}}_M + \norm{\hat e_z^{m+4}}_{M+A}
		\\
		&\qquad\lesssim \tau^2\norm{\ddot z}_{L^\infty,M} + \tau^2e^{CL\tau(m+1)} \big(1+\norm{z^{(4)}}_{L^1,M+A}\big)
		\\
		&\qquad\lesssim \tau^2e^{CL\tau(m+1)} \big(1+\norm{z}_{W^{4,1},M+A}\big).
	\end{align*}
	To prove the inequality for $z^{m+4}$, we observe that we analogously have
	\[
	\norm{\bdf_1e_z^{m+4}}_M + \norm{e_z^{m+4}}_{M+A} 
	\lesssim \tau^2e^{CL\tau(m+1)} \big(1+\norm{z^{(4)}}_{L^1,M+A}\big),
	\]
	which together with \Cref{cor:error_of_error} yields 
	\begin{align*}
		& \norm{\bdf_1z^{m+4}-\dot z(t^{m+4})}_M + \norm{z^{m+4}-z(t^{m+4})}_{M+A}
		\\
		&\qquad\leq \norm{\bdf_1z(t^{m+4})-\dot z(t^{m+4})}_M
		+ \norm{\bdf_1(e_z^{m+4}-z^{m+4}+z(t^{m+4}))}_M \\
		&\qquad\qquad+ \norm{\bdf_1e_z^{m+4}}_M
		+ \norm{e_z^{m+4}-z^{m+4}+z(t^{m+4})}_{M+A}
		+ \norm{e_z^{m+4}}_{M+A}
		\\
		&\qquad\lesssim \tau^2\,\norm{\ddot z}_{L^\infty,M} + \tau^2\,\norm{p^{(3)}}_{L^\infty,M_\Gamma} + \tau^2e^{CL\tau(m+1)} \big(1+\norm{z^{(4)}}_{L^1,M+A}\big)
		\\
		&\qquad\lesssim \tau^2e^{CL\tau(m+1)} \big(1+\norm{z}_{W^{4,1},M+A}\big),
	\end{align*}
	concluding the proof.
\end{proof}

When the inhomogeneity is independent of the state, it is in particular Lipschitz continuous with $L=0$ and the exponential term vanishes, leading to the following result.

\begin{corollary}\label{cor:discreteStateConvergence}
	If, additionally to the hypotheses of \Cref{thm:discreteStateConvergence}, the inhomogeneity $f$ is independent of the state, then
	\begin{equation}
		\norm{\bdf_1\hat z^{m+4}-\dot z(t^{m+4})}_M + \norm{\hat z^{m+4}-z(t^{m+4})}_{M+A} 
		\lesssim \tau^2 \big(1+\norm{z}_{W^{4,1},M+A}\big)
	\end{equation}
	holds of all $m\geq 0$.
\end{corollary}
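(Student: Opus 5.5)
The plan is to specialize the proof of \Cref{thm:discreteStateConvergence} rather than redo the stability analysis. If $f$ does not depend on the state, then $f(t,z_1)-f(t,z_2)=0$ for all $t,z_1,z_2$. Hence the Lipschitz condition \eqref{eq:Lipschitz_invNorm} holds with $L=0$, and all hypotheses of \Cref{thm:discreteStateConvergence} are met with this choice. The theorem then gives the bound $\tau^2e^{CL\tau(m+1)}(1+\norm{z}_{W^{4,1},M+A})$. The factor $e^{C\cdot 0\cdot\tau(m+1)}$ equals one, which is exactly the claim.

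The one thing to check is that the implicit constant in \Cref{thm:discreteStateConvergence} is uniform in $L$ near $0$. I would verify this by retracing the error equation directly. By \Cref{lem:error_solutions}, the difference $f(t,e_z+\ddf_0z(t))-f(t,z(t))$ vanishes identically when $f$ is state-independent. So $f_M(t,e_z)=\ddot z(t)-\bdf_2 z(t)$ is a known function of time, bounded via \eqref{eq:f_error_bound}.

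For a fully self-contained route, I would apply \Cref{lem:discreteEnergyStability} directly to the error system. This avoids \Cref{lem:LipschitzStability} and its Grönwall step, whose only source of exponential growth is $L$. The steps are:
\begin{itemize}
\item estimate $\sum_n b^n\lesssim\tau^2\norm{z^{(4)}}_{L^1,M+A}$ using \eqref{eq:f_error_bound} and \Cref{lem:Ek_Lp_error};
\item use $e_{H_0}\lesssim\tau^4$ to get $\sqrt{\varphi^m}\lesssim\tau^2(1+\norm{z^{(4)}}_{L^1,M+A})$ for the error quantities;
\item transfer this bound to $\norm{\bdf_1\hat e_z^{m+4}}_M+\norm{\hat e_z^{m+4}}_{M+A}$ by \Cref{lem:aux_vs_orig_state} and \Cref{lem:M_to_A_bound};
\item add the consistency error $\norm{\bdf_1 z(t^{m+4})-\dot z(t^{m+4})}_M\lesssim\tau^2\norm{\dot z}_{W^{2,\infty}}$, which is controlled by the $W^{4,1}$ norm through Sobolev embedding in time.
\end{itemize}

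There is essentially no obstacle. The only point needing care is that no $L$-dependent Grönwall factor enters, and this is immediate once \Cref{lem:discreteEnergyStability} is used in place of \Cref{lem:LipschitzStability}.
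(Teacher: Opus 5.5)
Your proposal is correct and matches the paper's argument. A state-independent $f$ satisfies \eqref{eq:Lipschitz_invNorm} with $L=0$, so \Cref{thm:discreteStateConvergence} applies and the factor $e^{CL\tau(m+1)}$ equals one; your alternative route through \Cref{lem:discreteEnergyStability} is just this $L=0$ case of \Cref{lem:LipschitzStability} unwound (the Grönwall step there runs with $\kappa=0$), so the uniformity-in-$L$ check you flag is not actually needed.
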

\begin{remark}
If a second-order method such as Crank--Nicolson is used to construct the extended initial values $z^1,z^2,z^3$, starting from an initial condition of the form $u(0)=u_0$, $\dot u(0)=w_0$, one typically obtains $e_z^0=0$ and $\norm{e_z^n}_{M+A}\lesssim\tau^3$ for $n=1,2,3$, since the local truncation error of a discretization method is typically higher than its global truncation error.
The required bound on the initial energy of the error is then fulfilled, since
\[
	\sqrt{e_{H_0}} 
	\leq \sum_{n=1}^3\, \norm[\big]{\tfrac{1}{\tau}Ee_z^n}_M + \sum_{n=0}^3\, \norm{e_z^n}_A \lesssim \frac1\tau\,\sum_{n=0}^4\, \norm{e_z^n}_M + \sum_{n=0}^4\, \norm{e_z^n}_A
	\lesssim \tau^2.
\]
\end{remark}
%
%
%=============================================================================
%=========  Numerics
%=============================================================================
\section{Numerical Experiments}\label{sect:numerics}

In this section, we present numerical experiments in three different scenarios to verify our theoretical results empirically.
In all three settings the domain is the unit circle $\Omega=\set{x\in\R^2\mid x_1^2+x_2^2\leq 1}$ and the meshes $\calT_h^\Omega$ are generated by \textit{DistMesh}; see~\cite{PerS04}. For simplicity, the mesh at the boundary $\calT_h^\Gamma$ is taken to be the restriction of $\calT_h^\Omega$ to the boundary.
Moreover, we consider standard $P_1$-finite elements on both $\calT_h^\Omega$ and $\calT_h^\Gamma$.

To compute the needed initial data $u^n$, $p^n$, $n=0,\ldots,3$, for the proposed splitting scheme, we apply the Crank--Nicolson scheme with the same time step size.

%
%
%======================================================================
\subsection{Time convergence for the model problem}\label{sec:firstExperiment}

For the first experiment, we consider the model problem \eqref{eq:kineticBC} with initial condition
\begin{equation}
	u(0,x) = e^{-20((x_1-1)^2+x_2^2)}, \qquad
	\dot u(0,x) = 0,
\end{equation}
analogously to \cite[Sect.~8.1]{HipK21}. Differently from there, however, we choose a time-varying inhomogeneity of the form
\begin{equation}
	f_\Omega(t,u) = \sin(t), \qquad f_\Gamma(t,p) = \cos(t)
\end{equation}
and we simulate the system up to time $T=2.3$.

For the space discretization, we consider a single mesh with maximal width $h\approx 0.0672$.
For the time discretization, we test the splitting method \eqref{eq:multistep} applied with time step $\tau=2^{-k}$ for $k=8,\ldots,14$. The reference solution is computed by the fully implicit Crank--Nicolson scheme with step size $2^{-16}$. 

In \Cref{tab:1}, we display the error in the $L^\infty(H^1)$-norm in both the bulk and the boundary, which can be stated in the discrete setting in terms of the $M_\Omega+A_\Omega$- and $M_\Gamma+A_\Gamma$-norms.
Furthermore, we show the $\log_2$ of the ratio between the errors of pairs of consecutive chosen step sizes.
Since the consecutive step sizes are always reduced by half, this value is expected to converge towards the convergence rate of the method.
Indeed, we can see that these rates are apparently converging to 2, in accordance with \Cref{cor:discreteStateConvergence}.
Analogous results are obtained when comparing the errors in the $L^\infty(L^2)$-norm and in the energy norm, but we omit the details for the sake of brevity.

\begin{table}[ht]
	\begin{tblr}{
			colspec={c|c||c|c},
%			hlines,
			row{1-Z} = {font=\small},
			rowsep = 0.2pt,
		}
		$\norm{u^N-u(t^N)}_{M_\Omega+A_\Omega}$ & 
		rate$(u)$ &
		$\norm{p^N-p(t^N)}_{M_\Omega+A_\Omega}$ & 
		rate$(p)$ \\ \hline\hline
%		0.050305 &  & 0.013828 & \\ \hline
%		0.015252 & 1.721746 & 0.003625 & 1.931470 \\ \hline
%		0.003904 & 1.965990 & 0.000919 & 1.980172 \\ \hline
%		0.000978 & 1.996908 & 0.000230 & 1.996398 \\ \hline
%		0.000244 & 2.002086 & 0.000058 & 1.997467 \\ \hline
%		0.000061 & 2.010126 & 0.000015  & 1.982435 \\
		0.050305 &  & 0.013828 & \\ \hline
		0.015252 & 1.72 & 0.003625 & 1.93 \\ \hline
		0.003904 & 1.97 & 0.000919 & 1.98 \\ \hline
		0.000978 & 2.00 & 0.000230 & 2.00 \\ \hline
		0.000244 & 2.00 & 0.000058 & 2.00 \\ \hline
		0.000061 & 2.01 & 0.000015  & 1.98 \\
	\end{tblr}
	\caption{Discrete $L^\infty(H^1)$ errors and rates with respect to the solution of the semidiscretized system for time step sizes $\tau=2^{-k}$, $k=8,\ldots,14$. %The values in the second and fourth column indicate the $\log_2$-ratios between two consecutive errors. As expected from the convergence analysis, both rates are approximately 2.
	}
	\label{tab:1}
\end{table}
%
%
%======================================================================
\subsection{Total convergence for the model problem}
\label{sec:secondExperiment}

In the second experiment, we consider the model problem \eqref{eq:kineticBC} with given exact solution
\begin{equation}\label{eq:enforcedSolution}
	u(t,x) = \cos(2\pi t)(x_1+x_2)^2,
\end{equation}
fixing the inhomogeneity to
\begin{subequations}
\begin{align}
	f_\Omega(t,x) &= -4\cos(2\pi t)\big(1+\pi^2(x_1+x_2)^2\big), \\
%	-4\pi^2\cos(2\pi t)(x_1+x_2)^2 - 4\cos(2\pi t)(x_1+x_2)^2
	f_\Gamma(t,x) &= \cos(2\pi t)\big( 8x_1x_2 - (4-3\pi^2)(x_1+x_2)^2 \big).
%	-4\pi^2\cos(2\pi t)(x_1+x_2)^2 + 8\cos(2\pi t)x_1x_2 + \cos(2\pi t)(x_1+x_1)^2 + 2\cos(2\pi t)(x_1+x_2)^2
\end{align}
\end{subequations}
For the spatial discretization, we consider $8$ different meshes, starting from maximal width $h\approx 0.6038$ and reducing the width every time by a factor of approximately $\sqrt{2}$.
For simplicity, the semidiscretization of the inhomogeneity is obtained by first approximating $f_\Omega$ and $f_\Gamma$ by piecewise linear interpolations at the vertices of $\Omega_h$ and $\Gamma_h$, respectively.
For the temporal discretization, we apply the splitting method \eqref{eq:multistep} for different step sizes~$\tau$ and compare the result with the exact solution.

In \Cref{fig:2}, we plot the $L^\infty(H^1)$ error of the combined bulk--surface discrete state $z^n=(u^n,p^n)$ as well as the $L^\infty(L^2)$ error of its time derivative $\dot z^n=(\dot u^n,\dot p^n)=\bdf_1z^n$ for different values of $h$ and $\tau$.
The errors are computed in the fully continuous setting by means of linear interpolation and integration.
Analyzing the plots, we make the following observations:
\begin{itemize}
	\item For every mesh, the error decreases with $\tau$ up to reaching asymptotically a fixed positive value, which becomes smaller when a finer mesh is selected. This is due to the fact that, for any fixed mesh, the discrete solution will converge to the solution of the semidiscretized system \eqref{eq:semidisc:kinetic} for $\tau\to 0$. The total discretization error meets then the barrier given by the semidiscretization error, which depends on the resolution of the mesh.
	\item For large values of $\tau$ --  compared to the corresponding values of $h$ -- the method shows some instability, clearly visible in the right half of both plots. This is due to the violation of the CFL condition $\tau\lesssim\sqrt{h}$.
	\item For the values of $\tau$ and $h$ which present neither instability nor horizontal asymptotic behavior, i.e., when $\tau\lesssim\sqrt{h}$ and $h$ is sufficiently small, we observe the expected second-order convergence in $\tau$.
\end{itemize}
Further simulations, which we omit to show here, with a Lipschitz continuous inhomogeneity $f$ depending in a nonlinear fashion on $z$, verify that the convergence order is retained also in the case of semilinear equations.
\begin{figure}%[ht]
	% This file was created by matlab2tikz.
%
\definecolor{mycolor1}{rgb}{0.56571,0.56571,0.56571}%
\definecolor{mycolor2}{rgb}{0.47143,0.47143,0.47143}%
%\definecolor{mycolor3}{rgb}{1.00000,0.00000,0.60000}%
\definecolor{mycolor3}{rgb}{0.49400,0.18400,0.55600}% violet
\begin{tikzpicture}

\begin{axis}[%
width=0.4\textwidth,
height=0.32\textwidth,
scale only axis,
xmode=log,
xmin=0.00162760416666667,
xmax=0.3,
xminorticks=true,
xlabel style={font=\color{white!15!black}},
xlabel={step size $\tau$},
ymode=log,
ymin=0.003,
ymax=60,
yminorticks=true,
ylabel style={font=\color{white!15!black}},
axis background/.style={fill=white},
title style={font=\bfseries},
title={\small $\|u-(u^n)^\ell\|_{L^\infty(H^1)}+ \|p-(p^n)^\ell\|_{L^\infty(H^1)}$},
legend style={legend cell align=left, align=left, draw=white!15!black},
legend columns=4,
legend style={at={(2.02,-0.44)}, anchor=south east, font=\footnotesize}
]
\addplot [color=white!66!black, line width=1.0pt, mark=square, mark options={solid, white!66!black}]
  table[row sep=crcr]{%
0.25	46.0014683205986\\
0.125	3.30983647721316\\
0.0625	2.10576451842732\\
0.03125	0.588340524897257\\
0.015625	0.335065882148786\\
0.0078125	0.32998928375549\\
0.00390625	0.333132993418736\\
0.001953125	0.334012651192061\\
};
\addlegendentry{$h = 0.29718$\quad}

\addplot [color=mycolor1, line width=1.0pt, mark=o, mark options={solid, mycolor1}]
  table[row sep=crcr]{%
0.25	2718.08438271384\\
0.125	3.28487089977927\\
0.0625	1.9753047508811\\
0.03125	0.550535200604696\\
0.015625	0.232615365467944\\
0.0078125	0.195393354991515\\
0.00390625	0.191663491321974\\
0.001953125	0.191166229065653\\
};
\addlegendentry{$h = 0.20741$\quad}

\addplot [color=mycolor2, line width=1.0pt, mark=x, mark options={solid, mycolor2}]
  table[row sep=crcr]{%
0.25	152771.618432347\\
0.125	3.42720467779469\\
0.0625	1.90519518480657\\
0.03125	0.507800710472664\\
0.015625	0.18332323987688\\
0.0078125	0.135039024930562\\
0.00390625	0.129545411513913\\
0.001953125	0.128652052272492\\
};
\addlegendentry{$h = 0.14394$\quad}

\addplot [color=black!20!mycolor2, line width=1.0pt, mark size=1.3pt, mark=*, mark options={solid, black!20!mycolor2}]
  table[row sep=crcr]{%
0.25	8467719.66591026\\
0.125	287.505937235062\\
0.0625	1.85583421112984\\
0.03125	0.475171650418256\\
0.015625	0.148275263292337\\
0.0078125	0.0959492397832061\\
0.00390625	0.0903166894938563\\
0.001953125	0.0894612823074638\\
};
\addlegendentry{$h = 0.093568$}

\addplot [color=black!50!mycolor1, line width=1.0pt, mark=diamond, mark options={solid, black!50!mycolor1}]
  table[row sep=crcr]{%
0.25	300765989.972614\\
0.125	4470849.22194939\\
0.0625	1.79832953047221\\
0.03125	0.453103182155189\\
0.015625	0.125240574236399\\
0.0078125	0.0692413156984859\\
0.00390625	0.0621804583867801\\
0.001953125	0.0611611082389716\\
};
\addlegendentry{$h = 0.067169$\quad}

\addplot [color=black!60!mycolor2, line width=1.0pt, mark=+, mark options={solid, black!60!mycolor2}]
  table[row sep=crcr]{%
0.25	8403933631.22051\\
0.125	14094922576.0994\\
0.0625	1.73816096077722\\
0.03125	0.438579510231993\\
0.015625	0.110402823865203\\
0.0078125	0.0520501766753891\\
0.00390625	0.0437718621480082\\
0.001953125	0.0426198234764422\\
};
\addlegendentry{$h = 0.045276$\quad}

\addplot [color=black!80!mycolor2, line width=1.0pt, mark=triangle, mark options={solid, black!80!mycolor2}]
  table[row sep=crcr]{%
0.25	279725734357.444\\
0.125	36563886957325.2\\
0.0625	2258.36142123016\\
0.03125	0.42723711002698\\
0.015625	0.100561109669728\\
0.0078125	0.0407328502713664\\
0.00390625	0.0313822913885991\\
0.001953125	0.0301003419752768\\
};
\addlegendentry{$h = 0.032228$\quad}

\addplot [color=black, line width=1.0pt, mark=asterisk, mark options={solid, black}]
  table[row sep=crcr]{%
0.25	8035831427559.98\\
0.125	4.46962171190652e+16\\
0.0625	27765066257.7548\\
0.03125	0.416869996725518\\
0.015625	0.093886282960465\\
0.0078125	0.0331981397442592\\
0.00390625	0.0227750344751071\\
0.001953125	0.0213058654153312\\
};
\addlegendentry{$h = 0.024661$}

\addplot [color=mycolor3, dashed, forget plot]
  table[row sep=crcr]{%
0.25	25\\
0.125	6.25\\
0.0625	1.5625\\
0.03125	0.390625\\
0.015625	0.09765625\\
0.0078125	0.0244140625\\
0.00390625	0.006103515625\\
0.001953125	0.00152587890625\\
};
\node[left, align=right, inner sep=0, font=\color{mycolor3}]
at (axis cs:0.015,0.008) {order 2};
%\node[right, align=left, inner sep=0, font=\color{mycolor3}]
%at (axis cs:0.25,15) {2};
\end{axis}

\begin{axis}[%
width=0.4\textwidth,
height=0.32\textwidth,
at={(0.5\textwidth,0\textwidth)},
scale only axis,
xmode=log,
xmin=0.00162760416666667,
xmax=0.3,
xminorticks=true,
xlabel style={font=\color{white!15!black}},
xlabel={step size $\tau$},
ymode=log,
ymin=0.003,
ymax=60,
yminorticks=true,
ylabel style={font=\color{white!15!black}},
axis background/.style={fill=white},
title style={font=\bfseries},
title={\small $\|\dot u-(\dot u^n)^\ell\|_{L^\infty(L^2)}+ \|\dot p-(\dot p^n)^\ell\|_{L^\infty(L^2)}$},
legend style={legend cell align=left, align=left, draw=white!15!black},
legend style={at={(0.95,0.05)}, anchor=south east, font=\footnotesize}
]
\addplot [color=white!66!black, line width=1.0pt, mark=square, mark options={solid, white!66!black}]
  table[row sep=crcr]{%
0.25	28.6955568460893\\
0.125	16.0374079124053\\
0.0625	7.36710209498332\\
0.03125	1.50273803585315\\
0.015625	0.407837054462483\\
0.0078125	0.341488145859703\\
0.00390625	0.361579843568436\\
0.001953125	0.36776184581524\\
};
%\addlegendentry{$h = 0.29718$}

\addplot [color=mycolor1, line width=1.0pt, mark=o, mark options={solid, mycolor1}]
  table[row sep=crcr]{%
0.25	3102.79518602558\\
0.125	14.3924337587651\\
0.0625	7.07155330233559\\
0.03125	1.45316933405722\\
0.015625	0.301915355353481\\
0.0078125	0.159074168985841\\
0.00390625	0.167230352179324\\
0.001953125	0.172538826036358\\
};
%\addlegendentry{$h = 0.20741$}

\addplot [color=mycolor2, line width=1.0pt, mark=x, mark options={solid, mycolor2}]
  table[row sep=crcr]{%
0.25	239088.243433468\\
0.125	12.8833131516903\\
0.0625	6.88315390688775\\
0.03125	1.47695935993368\\
0.015625	0.29943388113926\\
0.0078125	0.0908455972271824\\
0.00390625	0.0721103205365191\\
0.001953125	0.0749244782067184\\
};
%\addlegendentry{$h = 0.14394$}

\addplot [color=black!20!mycolor2, line width=1.0pt, mark size=1.3pt, mark=*, mark options={solid, black!20!mycolor2}]
  table[row sep=crcr]{%
0.25	16904748.706565\\
0.125	333.945707534673\\
0.0625	6.62562043434867\\
0.03125	1.46202504014293\\
0.015625	0.301511332206946\\
0.0078125	0.069233746648564\\
0.00390625	0.0355728313446702\\
0.001953125	0.0367847614783188\\
};
%\addlegendentry{$h = 0.093568$}

\addplot [color=black!50!mycolor1, line width=1.0pt, mark=diamond, mark options={solid, black!50!mycolor1}]
  table[row sep=crcr]{%
0.25	696334601.173545\\
0.125	7023831.99288149\\
0.0625	6.36636687279894\\
0.03125	1.4348576800986\\
0.015625	0.302873560332193\\
0.0078125	0.0667319305005844\\
0.00390625	0.0209522511554872\\
0.001953125	0.0172507130115537\\
};
%\addlegendentry{$h = 0.067169$}

\addplot [color=black!60!mycolor2, line width=1.0pt, mark=+, mark options={solid, black!60!mycolor2}]
  table[row sep=crcr]{%
0.25	22005812536.3831\\
0.125	28733923991.4499\\
0.0625	6.08139186839235\\
0.03125	1.40152204285493\\
0.015625	0.30037686738599\\
0.0078125	0.0666694948649458\\
0.00390625	0.0166517050781035\\
0.001953125	0.00853884855130118\\
};
%\addlegendentry{$h = 0.045276$}

\addplot [color=black!80!mycolor2, line width=1.0pt, mark=triangle, mark options={solid, black!80!mycolor2}]
  table[row sep=crcr]{%
0.25	802942461299.653\\
0.125	90180173118959.1\\
0.0625	3313.23340235377\\
0.03125	1.36185992479122\\
0.015625	0.295800087535902\\
0.0078125	0.066317078162583\\
0.00390625	0.01546025395404\\
0.001953125	0.00506029333715312\\
};
%\addlegendentry{$h = 0.032228$}

\addplot [color=black, line width=1.0pt, mark=asterisk, mark options={solid, black}]
  table[row sep=crcr]{%
0.25	25383710961266.3\\
0.125	1.28969917054157e+17\\
0.0625	54499414232.9543\\
0.03125	1.31954818232714\\
0.015625	0.291132970067091\\
0.0078125	0.0660135498936948\\
0.00390625	0.0154035958375186\\
0.001953125	0.00397800796256698\\
};
%\addlegendentry{$h = 0.024661$}

\addplot [color=mycolor3, dashed, forget plot]
  table[row sep=crcr]{%
0.25	25\\
0.125	6.25\\
0.0625	1.5625\\
0.03125	0.390625\\
0.015625	0.09765625\\
0.0078125	0.0244140625\\
0.00390625	0.006103515625\\
0.001953125	0.00152587890625\\
};
\node[left, align=right, inner sep=0, font=\color{mycolor3}]
at (axis cs:0.015,0.008) {order 2};
%\node[right, align=left, inner sep=0, font=\color{mycolor3}]
%at (axis cs:0.25,15) {2};
\end{axis}

\end{tikzpicture}%
	\caption{Errors with respect to the exact solution of the model problem~\eqref{eq:kineticBC} for different values of $h$ and $\tau$. The curves are compared with a dashed line representing second-order convergence.}
	\label{fig:2}
\end{figure}
%
%
%======================================================================
\subsection{Strong damping}

In this final experiment, we consider the extended model \eqref{eq:extPDE} with parameters $\beta_\Omega=\beta_\Gamma=\kappa_\Gamma=1$, $\delta_\Omega=0.1$, and $\delta_\Gamma=0.2$, while all the others are set to zero.
This corresponds to the wave equation with strongly damped dynamic boundary condition studied in~\cite[Sect.~8.3]{HipK21}.
Differently from their case, however, we enforce the same exact solution \eqref{eq:enforcedSolution} as in \Cref{sec:secondExperiment}, leading to the inhomogeneities
\begin{subequations}
	\begin{align}
		f_\Omega(t,x) &= -4\cos(2\pi t)\big( 1 + \pi^2(x_1+x_2)^2 \big) + 0.8\pi\sin(2\pi t), \\
		f_\Gamma(t,x) &=
		\begin{multlined}[t]
			\cos(2\pi t)\big( 8x_1x_2 - (4-3\pi^2)(x_1+x_2)^2 \big) \\ - 4\pi\sin(2\pi t)\big( 0.8x_1x_2 + 0.1(x_1+x_2)^2 \big).
		\end{multlined}
	\end{align}
\end{subequations}
We repeat then the same numerical experiment as in the previous subsection, using the same meshes but finer time steps.

In \Cref{fig:3}, we observe similar phenomena as in \Cref{fig:2}. Due to the stricter CFL condition $\tau\lesssim h$ holding for the strongly damped wave equation (as discussed in \Cref{sec:alternativeSplitting}), the plots exhibit more instability. In particular, for the $L^\infty(H^1)$-norm, the second-order convergence in $\tau$ is not visible. 
It is, however, clearly visible if the $L^\infty(L^2)$-error of $z$ is considered instead. 
Moreover, simulations on the same model for a fixed mesh, when compared to the solution of the semidiscretized system analogously as in \Cref{sec:firstExperiment}, still display second-order convergence, as expected. 
\begin{figure}%[ht]
	% This file was created by matlab2tikz.
%
\definecolor{mycolor1}{rgb}{0.56571,0.56571,0.56571}%
\definecolor{mycolor2}{rgb}{0.47143,0.47143,0.47143}%
%\definecolor{mycolor3}{rgb}{1.00000,0.00000,0.60000}%
\definecolor{mycolor3}{rgb}{0.49400,0.18400,0.55600}% violet
\begin{tikzpicture}

\begin{axis}[%
width=0.4\textwidth,
height=0.32\textwidth,
scale only axis,
xmode=log,
xmin=0.000813802083333333,
xmax=0.15,
xminorticks=true,
xlabel style={font=\color{white!15!black}},
xlabel={step size $\tau$},
ymode=log,
ymin=0.0015,
ymax=9,
yminorticks=true,
ylabel style={font=\color{white!15!black}},
axis background/.style={fill=white},
title style={font=\bfseries},
title={\small $\|u-(u^n)^\ell\|_{L^\infty(H^1)}+ \|p-(p^n)^\ell\|_{L^\infty(H^1)}$},
legend style={legend cell align=left, align=left, draw=white!15!black},
legend columns=4,
legend style={at={(2.02,-0.44)}, anchor=south east, font=\footnotesize}
]
\addplot [color=white!66!black, line width=1.0pt, mark=square, mark options={solid, white!66!black}]
  table[row sep=crcr]{%
0.125	1.99387153842582\\
0.0625	0.759382229069193\\
0.03125	0.348235460067506\\
0.015625	0.282027333501994\\
0.0078125	0.283345232057774\\
0.00390625	0.283179289835194\\
0.001953125	0.283003860368102\\
0.0009765625	0.282939247310227\\
};
\addlegendentry{$h = 0.29718$}

\addplot [color=mycolor1, line width=1.0pt, mark=o, mark options={solid, mycolor1}]
  table[row sep=crcr]{%
0.125	2533.16561371551\\
0.0625	0.717504776737073\\
0.03125	0.282945250764987\\
0.015625	0.194055471953235\\
0.0078125	0.189697969006701\\
0.00390625	0.189101861342637\\
0.001953125	0.188863848525282\\
0.0009765625	0.188788080152595\\
};
\addlegendentry{$h = 0.20741$}

\addplot [color=mycolor2, line width=1.0pt, mark=x, mark options={solid, mycolor2}]
  table[row sep=crcr]{%
0.125	12209033.5166262\\
0.0625	0.672344136185911\\
0.03125	0.249229200105254\\
0.015625	0.140828033732496\\
0.0078125	0.128642116121058\\
0.00390625	0.127054462778029\\
0.001953125	0.126674211878097\\
0.0009765625	0.126574833235989\\
};
\addlegendentry{$h = 0.14394$}

\addplot [color=black!20!mycolor2, line width=1.0pt, mark size=1.3pt, mark=*, mark options={solid, black!20!mycolor2}]
  table[row sep=crcr]{%
0.125	58672488068.3861\\
0.0625	1269115.96066099\\
0.03125	0.225824083682722\\
0.015625	0.10766571911896\\
0.0078125	0.0917930007290756\\
0.00390625	0.089459761208142\\
0.001953125	0.0889686297082938\\
0.0009765625	0.0888554049997348\\
};
\addlegendentry{$h = 0.093568$}

\addplot [color=black!50!mycolor1, line width=1.0pt, mark=diamond, mark options={solid, black!50!mycolor1}]
  table[row sep=crcr]{%
0.125	120684422122856\\
0.0625	15933399633269.1\\
0.03125	2451.53295431138\\
0.015625	0.0837843790172323\\
0.0078125	0.064612282154908\\
0.00390625	0.0615352881314106\\
0.001953125	0.0608942549239558\\
0.0009765625	0.0607602671546958\\
};
\addlegendentry{$h = 0.067169$}

\addplot [color=black!60!mycolor2, line width=1.0pt, mark=+, mark options={solid, black!60!mycolor2}]
  table[row sep=crcr]{%
0.125	1.55252948586208e+17\\
0.0625	4.26227939921104e+19\\
0.03125	5.6604437326934e+16\\
0.015625	0.0687874695699415\\
0.0078125	0.0468801946085359\\
0.00390625	0.04325159528139\\
0.001953125	0.0424575622068905\\
0.0009765625	0.0422948104197276\\
};
\addlegendentry{$h = 0.045276$}

\addplot [color=black!80!mycolor2, line width=1.0pt, mark=triangle, mark options={solid, black!80!mycolor2}]
  table[row sep=crcr]{%
0.125	2.78178227121638e+20\\
0.0625	1.29995605241377e+26\\
0.03125	1.21403208237612e+30\\
0.015625	4.22765847440849e+17\\
0.0078125	0.0349771592766291\\
0.00390625	0.0308930184457724\\
0.001953125	0.0299995442208531\\
0.0009765625	0.0298048602770797\\
};
\addlegendentry{$h = 0.032228$}

\addplot [color=black, line width=1.0pt, mark=asterisk, mark options={solid, black}]
  table[row sep=crcr]{%
0.125	3.80395586392145e+23\\
0.0625	1.13738841180575e+32\\
0.03125	7.49436691985207e+41\\
0.015625	1.53324132099413e+41\\
0.0078125	174.862130410213\\
0.00390625	0.0221942766687191\\
0.001953125	0.0212264243113757\\
0.0009765625	0.0210011041736329\\
};
\addlegendentry{$h = 0.024661$}

\addplot [color=mycolor3, dashed, forget plot]
  table[row sep=crcr]{%
0.125	12.5\\
0.0625	3.125\\
0.03125	0.78125\\
0.015625	0.1953125\\
0.0078125	0.048828125\\
0.00390625	0.01220703125\\
0.001953125	0.0030517578125\\
0.0009765625	0.000762939453125\\
};
\node[left, align=right, inner sep=0, font=\color{mycolor3}]
at (axis cs:0.01,0.007) {order 2};
\end{axis}

\begin{axis}[%
width=0.4\textwidth,
height=0.32\textwidth,
at={(0.5\textwidth,0\textwidth)},
scale only axis,
xmode=log,
xmin=0.000813802083333333,
xmax=0.15,
xminorticks=true,
xlabel style={font=\color{white!15!black}},
xlabel={step size $\tau$},
ymode=log,
ymin=0.0015,
ymax=9,
yminorticks=true,
ylabel style={font=\color{white!15!black}},
axis background/.style={fill=white},
title style={font=\bfseries},
title={\small $\|\dot u-(\dot u^n)^\ell\|_{L^\infty(L^2)}+ \|\dot p-(\dot p^n)^\ell\|_{L^\infty(L^2)}$},
legend style={legend cell align=left, align=left, draw=white!15!black},
legend style={at={(0.95,0.05)}, anchor=south east, font=\footnotesize}
]
\addplot [color=white!66!black, line width=1.0pt, mark=square, mark options={solid, white!66!black}]
  table[row sep=crcr]{%
0.125	13.7123476257188\\
0.0625	6.24579313732881\\
0.03125	1.21967245127124\\
0.015625	0.317500053184976\\
0.0078125	0.304729460822813\\
0.00390625	0.321616322521093\\
0.001953125	0.326244465984856\\
0.0009765625	0.327356769562353\\
};
%\addlegendentry{$h = 0.29718$}

\addplot [color=mycolor1, line width=1.0pt, mark=o, mark options={solid, mycolor1}]
  table[row sep=crcr]{%
0.125	4302.46716328698\\
0.0625	5.92888169823011\\
0.03125	1.21963076088553\\
0.015625	0.228434902263195\\
0.0078125	0.143889360608452\\
0.00390625	0.154772912224226\\
0.001953125	0.158941595645431\\
0.0009765625	0.159990731620793\\
};
%\addlegendentry{$h = 0.20741$}

\addplot [color=mycolor2, line width=1.0pt, mark=x, mark options={solid, mycolor2}]
  table[row sep=crcr]{%
0.125	27351067.0110269\\
0.0625	5.70494038094131\\
0.03125	1.21533897451319\\
0.015625	0.23018650095115\\
0.0078125	0.0726821989929774\\
0.00390625	0.0703305582283983\\
0.001953125	0.073456965753248\\
0.0009765625	0.0743735974808927\\
};
%\addlegendentry{$h = 0.14394$}

\addplot [color=black!20!mycolor2, line width=1.0pt, mark size=1.3pt, mark=*, mark options={solid, black!20!mycolor2}]
  table[row sep=crcr]{%
0.125	168973369025.338\\
0.0625	3540430.1519551\\
0.03125	1.188092845754\\
0.015625	0.234890638841886\\
0.0078125	0.0504990251628197\\
0.00390625	0.034722074575103\\
0.001953125	0.036794374146334\\
0.0009765625	0.0376388626688615\\
};
%\addlegendentry{$h = 0.093568$}

\addplot [color=black!50!mycolor1, line width=1.0pt, mark=diamond, mark options={solid, black!50!mycolor1}]
  table[row sep=crcr]{%
0.125	413826742567648\\
0.0625	55588848559872.3\\
0.03125	8357.77005808154\\
0.015625	0.237499652010902\\
0.0078125	0.0478966340736215\\
0.00390625	0.0172881212457833\\
0.001953125	0.017050346186561\\
0.0009765625	0.0177637619058008\\
};
%\addlegendentry{$h = 0.067169$}

\addplot [color=black!60!mycolor2, line width=1.0pt, mark=+, mark options={solid, black!60!mycolor2}]
  table[row sep=crcr]{%
0.125	6.23277750229572e+17\\
0.0625	1.75228981063693e+20\\
0.03125	2.49536973473469e+17\\
0.015625	0.237329837617399\\
0.0078125	0.0495809838182056\\
0.00390625	0.0117440594778258\\
0.001953125	0.00813712440755406\\
0.0009765625	0.00855643944725433\\
};
%\addlegendentry{$h = 0.045276$}

\addplot [color=black!80!mycolor2, line width=1.0pt, mark=triangle, mark options={solid, black!80!mycolor2}]
  table[row sep=crcr]{%
0.125	1.26274071624917e+21\\
0.0625	6.21973708341087e+26\\
0.03125	6.46877350335312e+30\\
0.015625	2.5744445091027e+18\\
0.0078125	0.0505296702449439\\
0.00390625	0.0108935090995114\\
0.001953125	0.00424481268995336\\
0.0009765625	0.0041921634962945\\
};
%\addlegendentry{$h = 0.032228$}

\addplot [color=black, line width=1.0pt, mark=asterisk, mark options={solid, black}]
  table[row sep=crcr]{%
0.125	1.98313775485994e+24\\
0.0625	6.09633375520594e+32\\
0.03125	4.68673830873914e+42\\
0.015625	1.11779684075818e+42\\
0.0078125	1496.98544333247\\
0.00390625	0.0112566225498899\\
0.001953125	0.00284880200138384\\
0.0009765625	0.00203737961507957\\
};
%\addlegendentry{$h = 0.024661$}

\addplot [color=mycolor3, dashdotted, forget plot]
  table[row sep=crcr]{%
0.125	12.5\\
0.0625	3.125\\
0.03125	0.78125\\
0.015625	0.1953125\\
0.0078125	0.048828125\\
0.00390625	0.01220703125\\
0.001953125	0.0030517578125\\
0.0009765625	0.000762939453125\\
};
\node[left, align=right, inner sep=0, font=\color{mycolor3}]
at (axis cs:0.01,0.007) {order 2};
\end{axis}
\end{tikzpicture}%
	\caption{Errors with respect to the exact solution of the wave equation with strongly damped dynamic boundary conditions. The curves are compared with a dashed line representing second-order convergence. 
	}	
	\label{fig:3}
\end{figure}
%
%
%=============================================================================
%=========  Conclusion
%=============================================================================
\section{Conclusion}\label{sect:conclusion}
In this paper, we have introduced and analyzed a fully discrete bulk--surface splitting scheme of second order for wave systems with kinetic boundary conditions. The resulting multistep scheme is based on different delay formulae and can be interpreted as a perturbation of the classical $\bdf$-2 method. As verified by numerical experiments, the scheme is stable under the CFL condition $\tau \lesssim \sqrt{h}$ (or $\tau\lesssim h$ in the case of strong damping), which is less restrictive than the typical CFL condition for wave systems. 
% the differential--algebraic structure of the system equations.  
%
%
%======================================================================
%=========  Bibs
%======================================================================
\bibliographystyle{alpha}
\bibliography{bib_dynBC}

@article{EllR13,
  author = {Elliott, C. M. and Ranner, T.},
  title = {Finite element analysis for a coupled bulk--surface partial differential equation},
  journal = {IMA J. Numer. Anal.},  
  fjournal = {IMA Journal of Numerical Analysis},
  volume = {33},
  number = {2},
  pages = {377--402},
  year = {2013},
  doi = {10.1093/imanum/drs022},
  URL = {http://dx.doi.org/10.1093/imanum/drs022},
}

@article{Gal07,
    AUTHOR = {Gal, C. G.},
     TITLE = {Global well-posedness for the non-isothermal {C}ahn-{H}illiard equation with dynamic boundary conditions},
   JOURNAL = {Adv. Differential Equ.},
  FJOURNAL = {Advances in Differential Equations},
    VOLUME = {12},
      YEAR = {2007},
    NUMBER = {11},
     PAGES = {1241--1274},
      ISSN = {1079-9389},
}

@article{GarK20,
  author = {Garcke, H. and Knopf, P.},
  title = {Weak Solutions of the {C}ahn--{H}illiard System with Dynamic Boundary Conditions: {A} Gradient Flow Approach},
  journal = {SIAM J. Math. Anal.},
  fjournal = {SIAM Journal on Mathematical Analysis},
  volume = {52},
  number = {1},
  pages = {340--369},
  year = {2020},
  doi = {10.1137/19M1258840},
  URL = {https://doi.org/10.1137/19M1258840}, 
}

@book{GilT01,
    AUTHOR = {Gilbarg, D. and Trudinger, N. S.},
     TITLE = {Elliptic Partial Differential Equations of Second Order},
 PUBLISHER = {Springer-Verlag},
   ADDRESS = {Berlin},
      YEAR = {2001},
     PAGES = {xiv+517},
      ISBN = {3-540-41160-7},
}

@article{Gol06,
    AUTHOR = {Goldstein, G. R.},
     TITLE = {Derivation and physical interpretation of general boundary conditions},
   JOURNAL = {Adv. Differential Equ.},
  FJOURNAL = {Advances in Differential Equations},
    VOLUME = {11},
      YEAR = {2006},
    NUMBER = {4},
     PAGES = {457--480},
      ISSN = {1079-9389},
}

@article{GolMS11,
	title = {A {C}ahn–{H}illiard model in a domain with non-permeable walls},
	journal = {Physica D},
	fjournal = {Physica D: Nonlinear Phenomena},
	volume = {240},
	number = {8},
	pages = {754--766},
	year = {2011},
	author = {Goldstein, G. R. and Miranville, A. and Schimperna, G.}
}

@article{GraL14,
    AUTHOR = {Graber, P. J. and Lasiecka, I.},
     TITLE = {Analyticity and {G}evrey class regularity for a strongly damped wave equation with hyperbolic dynamic boundary conditions},
   JOURNAL = {Semigroup Forum},
    VOLUME = {88},
      YEAR = {2014},
    NUMBER = {2},
     PAGES = {333--365},
      ISSN = {0037-1912},
       DOI = {10.1007/s00233-013-9534-3},
       URL = {https://doi.org/10.1007/s00233-013-9534-3},
}

@book{HaiW96,
  address = {Berlin},
  author = {Hairer, E. and Wanner, G.},
  edition = {Second},
  publisher = {Springer-Verlag},
  title = {Solving Ordinary Differential Equations {II}: Stiff and Differential-Algebraic Problems},
  year = {1996}
}

@phdthesis{Hip17,
   author  = {Hipp, D.},
   title   = {A unified error analysis for spatial discretizations of wave-type equations with applications to dynamic boundary conditions},
   school  = {Karlsruher Institut f\"ur Technologie (KIT)},
   year    = {2017},
   type    = {Ph{D} thesis},
   url     = {https://publikationen.bibliothek.kit.edu/1000070952},
}

@article{KomZ90,
    AUTHOR = {Komornik, V. and Zuazua, E.},
     TITLE = {A direct method for the boundary stabilization of the wave equation},
   JOURNAL = {J. Math. Pures Appl.},
    VOLUME = {69},
      YEAR = {1990},
    NUMBER = {1},
     PAGES = {33--54},
      ISSN = {0021-7824},
}

@article{KovL17,
  author = {Kov{\'a}cs, B. and Lubich, C.},  
  title = {Numerical analysis of parabolic problems with dynamic boundary conditions},
  journal = {IMA J. Numer. Anal.},
  fjournal = {IMA Journal of Numerical Analysis},
  volume = {37},
  number = {1},
  pages = {1--39},
  year = {2017},
  doi = {10.1093/imanum/drw015},
  URL = {http://dx.doi.org/10.1093/imanum/drw015},
  eprint = {/oup/backfile/content_public/journal/imajna/37/1/10.1093_imanum_drw015/3/drw015.pdf}
}

@book{LamMT13,
  author = {Lamour, R. and M{\"a}rz, R. and Tischendorf, C.},
  pages = {xxviii+649},
  publisher = {Springer-Verlag},
  address = {Berlin, Heidelberg},
  title = {Differential-Algebraic Equations: A Projector Based Analysis},
  year = {2013},
  doi = {10.1007/978-3-642-27555-5},
  isbn = {978-3-642-27554-8; 978-3-642-27555-5},
  url = {http://dx.doi.org/10.1007/978-3-642-27555-5},
}

@incollection{Vit13,
    AUTHOR = {Vitillaro, E.},
     TITLE = {Strong solutions for the wave equation with a kinetic boundary condition},
 BOOKTITLE = {Recent trends in nonlinear partial differential equations {I}. {E}volution problems},
    SERIES = {Contemp. Math.},
    VOLUME = {594},
     PAGES = {295--307},
 PUBLISHER = {Amer. Math. Soc., Providence, RI},
      YEAR = {2013},
       DOI = {10.1090/conm/594/11793},
       URL = {https://doi.org/10.1090/conm/594/11793},
}

@article{Vit15,
  author     = {Vitillaro, E.},
  title      = {On the the wave equation with hyperbolic dynamical boundary conditions, interior and boundary damping and source},
  year       = {2017},
  journal    = {Arch. Ration. Mech. An.},  
  fjournal   = {Archive for Rational Mechanics and Analysis},
  volume     = {223},
  number     = {3},
  pages      = {1183--1237},
  url        = {https://doi.org/10.1007/s00205-016-1055-2}, 
  doi        = {10.1007/s00205-016-1055-2}
}

@article{VraS13a,
    AUTHOR = {Vr\'{a}bel', V. and Slodi\v{c}ka, M.},
     TITLE = {Nonlinear parabolic equation with a dynamical boundary condition of diffusive type},
   JOURNAL = {Appl. Math. Comput.},
  FJOURNAL = {Applied Mathematics and Computation},
    VOLUME = {222},
      YEAR = {2013},
     PAGES = {372--380},
      ISSN = {0096-3003},
       DOI = {10.1016/j.amc.2013.07.057},
       URL = {https://doi.org/10.1016/j.amc.2013.07.057},
}

@article{HocL20,
  author = {Hochbruck, M. and Leibold, J.},
  title = {Finite element discretization of semilinear acoustic wave equations with kinetic boundary conditions},
  fjournal = {ETNA - Electronic Transactions on Numerical Analysis},
  journal = {Electron. T. Numer. Ana.},
  year = {2020},
  volume = {53},
  number = {},
  pages = {522--540},
  doi = {https://doi.org/10.1553/etna_vol53s522},
}

@article{HocL21,
  author = {Hochbruck, M. and Leibold, J.},
  title = {An implicit--explicit time discretization scheme for second-order semilinear wave equations with application to dynamic boundary conditions},
  journal = {Numer. Math.},
  year = {2021},
  volume = {147},
  number = {4},
  pages = {869--899},
  doi = {https://doi.org/10.1007/s00211-021-01184-w},
}

@article{HipK21,
    author = {Hipp, D. and Kov{\'a}cs, B.},
    title = {Finite element error analysis of wave equations with dynamic boundary conditions: {$L^2$} estimates},
    journal = {IMA J. Numer. Anal.},
    volume = {41},
    number = {1},
    pages = {638--728},
    year = {2021},
    issn = {0272-4979},
    doi = {10.1093/imanum/drz073},
    url = {https://doi.org/10.1093/imanum/drz073},
}

@article{AltKZ22,
  author = {Altmann, R. and Kov{\'a}cs, B. and Zimmer, C.},  
  title = {Bulk--surface {L}ie splitting for parabolic problems with dynamic boundary conditions},
  journal = {IMA J. Numer. Anal.},  
  fjournal = {IMA Journal of Numerical Analysis},
  volume = {43},
  number = {2},
  pages = {950--975},
  year = {2022},
  issn = {0272-4979},
  doi = {10.1093/imanum/drac002},
  url = {https://doi.org/10.1093/imanum/drac002},
  eprint = {https://academic.oup.com/imajna/article-pdf/43/2/950/49629962/drac002.pdf},
}

@article{Alt23,
  author     = {Altmann, R.},
  title      = {Splitting schemes for the semi-linear wave equation with dynamic boundary conditions},
  journal = {Comput. Math. Appl.},
  fjournal = {Computers and Mathematics with Applications},
  volume = {151},
  pages = {12--20},
  year = {2023},
}

@article{AltV21,
  author = {Altmann, R. and Verf\"urth, B.},
  title = {A multiscale method for heterogeneous bulk-surface coupling},
  journal = {Multiscale Model. Simul.},
  volume = {19},
  number = {1},
  pages = {374--400},
  year = {2021}
}

@article{PerS04,
  author = {Persson, P.-O. and Strang, G.},
  title = {A Simple Mesh Generator in {MATLAB}},
  volume = {46},
  journal = {SIAM Rev.},
  year = {2004},
  pages = {329--345},
}

@article{HipHS18,
    author = {Hipp, D. and Hochbruck, M. and Stohrer, C.},
    title = {Unified error analysis for nonconforming space discretizations of wave-type equations},
    journal = {IMA J. Numer. Anal.},  
    fjournal = {IMA Journal of Numerical Analysis},
    volume = {39},
    number = {3},
    pages = {1206--1245},
    year = {2018},
    issn = {0272-4979},
    doi = {10.1093/imanum/dry036},
    url = {https://doi.org/10.1093/imanum/dry036},
    eprint = {https://academic.oup.com/imajna/article-pdf/39/3/1206/28955384/dry036.pdf},
}

@incollection {Dzi88,
	AUTHOR = {Dziuk, G.},
	TITLE = {Finite elements for the {B}eltrami operator on arbitrary surfaces},
	BOOKTITLE = {Partial differential equations and calculus of variations},
	PAGES = {142--155},
	PUBLISHER = {Springer},
	address = {Berlin},
	YEAR = {1988},
	DOI = {10.1007/BFb0082865},
	URL = {https://doi.org/10.1007/BFb0082865},
}

@article{AltZ24,
    author = {Altmann, R. and Zimmer, C.},
    title = {A second-order bulk--surface splitting for parabolic problems with dynamic boundary conditions},
    journal = {IMA J. Numer. Anal.},
    fjournal = {IMA Journal of Numerical Analysis},
    volume = {44},
    number = {4},
    pages = {2370--2393},
    year = {2024},
    issn = {0272-4979},
    doi = {10.1093/imanum/drad062},
    url = {https://doi.org/10.1093/imanum/drad062},
    eprint = {https://academic.oup.com/imajna/article-pdf/44/4/2370/58633662/drad062.pdf},
}

@phdthesis{Zim21phd,
	title={Temporal discretization of constrained partial differential equations},
	author={Zimmer, C.},
	year={2021},
	school={Technische Universität Berlin}
}

@book{ErnG21,
	author={Ern, A. and Guermond, J.-L.},
	title={Finite Elements I: Approximation and Interpolation}, 
	ISBN={9783030563417},
	ISSN={2196-9949},
	url={http://dx.doi.org/10.1007/978-3-030-56341-7}, 
	DOI={10.1007/978-3-030-56341-7},
	journal={Texts in Applied Mathematics},
	publisher={Springer International Publishing},
	address={Cham},
	year={2021}
}

@misc{Emm99ppt,
	title={Discrete versions of {G}ronwall's lemma and their application to the numerical analysis of parabolic problems},
	author={Emmrich, E.},
	year={1999},
	howpublished={Preprint No. 637, TU Berlin},
	institution={Technische Universität Berlin}
}

@inproceedings{VilLZM06,
	title = {Boundary control for a class of dissipative differential operators including diffusion systems},
	author = {Villegas, J. A. and Le Gorrec, Y. and Zwart, H. J. and Maschke, B. M.},
	year = {2006}, 
	publisher = {Kyoto University}, 
	pages = {297--304},
	editor = {Yamamoto, Y.},
	booktitle = {Proceedings of the 17th International Symposium on Mathematical Theory of Networks and Systems (MTNS)},
	address = {Japan},
}

@inproceedings{RamS04,
	title = {On interconnections of infinite-dimensional port-{H}amiltonian systems},
	author = {Pasumarthy, R. and van der Schaft, A.},
	year = {2004},
	isbn = {9056825178},
	publisher = {Katholieke Universiteit Leuven},
	booktitle = {Proceedings of the 16th International Symposium on Mathematical Theory of Networks and Systems},
}

@phdthesis{Mor24phd,
	title = {Modeling and Numerical Treatment of Port-{{Hamiltonian}} Descriptor Systems},
	author = {Morandin, R.},
	year = {2024},
	school={Technische Universität Berlin},
	Doi		= {10.14279/depositonce-19826}
	}
%
%
%======================================================================
%=========  Appendix A
%======================================================================
\appendix
\section{Technical lemmas}
\label{app:technicalLemma}
We start with results for the difference operator $E u = u - u_\tau$.
\begin{lemma}
\label{lem:Ek_Lp_error}
	Let $u\in W^{k,p}(-k\tau,0;\R^n)$ with $k\geq 1$ and $p\in[1,\infty]$. Then it holds that
	\begin{equation}
		\norm{E^ku(0)} \leq \tau^{k-\frac{1}{p}}\norm{u^{(k)}}_{L^p}.
	\end{equation}
%	where $\frac{1}{p'}=1-\frac{1}{p}$.
\end{lemma}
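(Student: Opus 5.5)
The natural route is the classical representation of iterated backward differences as iterated integrals, followed by Hölder's inequality. I will use the notation $E u(s) = u(s) - u(s-\tau)$. First I establish by induction on $k$ the identity
\[
	E^k u(0) = \int_0^\tau\!\!\cdots\!\int_0^\tau u^{(k)}(-s_1-\dots-s_k)\,\mathrm{d}s_1\cdots\mathrm{d}s_k .
\]
For $k=1$ this is the fundamental theorem of calculus, $u(0)-u(-\tau)=\int_0^\tau \dot u(-s)\ds$, which is valid for absolutely continuous $u$; this holds for $W^{1,p}$ functions on the bounded interval after choosing the continuous representative. For the induction step I write $E^{k}u(0) = E^{k-1}(Eu)(0)$. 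Applying the hypothesis to $v = Eu$ gives
\[
	E^{k-1}v(0)=\int_{[0,\tau]^{k-1}} v^{(k-1)}\Big(-\sum_{i\ge 2} s_i\Big)\,\mathrm{d}s .
\]
Since $v^{(k-1)}(r) = u^{(k-1)}(r)-u^{(k-1)}(r-\tau)$, applying the case $k=1$ to $u^{(k-1)}$ yields the formula. All arguments stay in $[-k\tau,0]$, so only the values of $u$ on the given interval are used.

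Next I rewrite the iterated integral as a single integral against a kernel, $E^k u(0) = \int_{-k\tau}^0 \beta_k(r)\, u^{(k)}(r)\dr$. Here $\beta_k\ge 0$ is the density of $-(s_1+\dots+s_k)$ with the $s_i$ ranging over $[0,\tau]$ under Lebesgue measure. It is a B-spline-type kernel, namely the $k$-fold convolution of $\mathbbm{1}_{[0,\tau]}$. Its total mass is $\norm{\beta_k}_{L^1}=\tau^k$, and its sup norm satisfies $\norm{\beta_k}_{L^\infty}\le \tau^{k-1}$. The sup bound follows by induction, since $\beta_k = \beta_{k-1}*\mathbbm{1}_{[0,\tau]}$ implies $\norm{\beta_k}_\infty \le \norm{\beta_{k-1}}_\infty\,\tau$ with $\beta_1=\mathbbm{1}_{[0,\tau]}$.

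Hölder's inequality then gives $\norm{E^ku(0)}\le \norm{\beta_k}_{L^{p'}}\norm{u^{(k)}}_{L^p}$, where $\tfrac1p+\tfrac1{p'}=1$. Interpolating between the two bounds on $\beta_k$ gives
\[
	\norm{\beta_k}_{L^{p'}} \le \norm{\beta_k}_{L^\infty}^{1-1/p'}\norm{\beta_k}_{L^1}^{1/p'} \le \tau^{(k-1)/p}\,\tau^{k(1-1/p)} = \tau^{k-\frac1p},
\]
which is exactly the claimed exponent. For vector-valued $u$ I apply the same estimate with the norm inside the integral, using Minkowski's integral inequality.

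The only delicate point is the sup bound on the kernel, since the crude estimate via Cauchy–Schwarz on the whole cube would lose powers of $\tau$. The convolution induction above handles it cleanly. The endpoint cases are immediate: $p=1$ gives $\tau^{k-1}$ and $p=\infty$ gives $\tau^k$.
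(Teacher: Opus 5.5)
Your proof is correct, but it is organized differently from the paper's. The paper uses the iterated-integral representation only for $p=\infty$. For $p<\infty$ it inducts on $k$: the case $k=1$ is Hölder's inequality, giving $\tau^{1/p'}$. The induction step applies the hypothesis to $Eu$ on $[-(k-1)\tau,0]$ and then proves the single-difference estimate $\norm{Eu^{(k-1)}}_{L^p}\le\tau\norm{u^{(k)}}_{L^p}$, using Hölder, a change of variables and a bound on the measure of the overlap sets $K_r$. So one factor $\tau$ is peeled off per difference. You instead collapse all $k$ differences into one Peano (B-spline) kernel and apply Hölder once, which reduces everything to $\norm{\beta_k}_{L^{p'}}\le\tau^{k-1/p}$.

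The two arguments are dual to each other. Young's inequality gives $\norm{\beta_k}_{L^{p'}}=\norm{\beta_{k-1}*\beta_1}_{L^{p'}}\le\tau\norm{\beta_{k-1}}_{L^{p'}}$ together with $\norm{\beta_1}_{L^{p'}}=\tau^{1/p'}$. This is the adjoint of the paper's chain, and it would even let you skip the $L^1$--$L^\infty$ interpolation.

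Your route buys uniformity. All $p\in[1,\infty]$ are handled by one computation, and the vector-valued case (for any norm on $\R^n$) is immediate because the norm can be moved inside the integral. The paper's reduction to $n=1$ needs more care there: for $p>2$, combining the componentwise scalar bounds in the Euclidean norm only yields $\tau^{k-1/p}\big(\sum_i\norm{u_i^{(k)}}_{L^p}^2\big)^{1/2}$, which can exceed $\tau^{k-1/p}\norm{u^{(k)}}_{L^p}$. The paper's route, in turn, never needs the kernel and isolates a reusable elementary estimate for a single difference.

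One harmless slip: the density of $-(s_1+\dots+s_k)$ is the $k$-fold convolution of $\mathbbm{1}_{[-\tau,0]}$, the reflection of what you wrote. This changes no norm.
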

\begin{proof}
	Let $\frac{1}{p'}=1-\frac{1}{p}$.
	We assume for simplicity $n=1$. For $n>1$ the result then follows immediately. %from
%	\[
%	\norm{E^ku(0)} = \pset*{\sum_{i=1}^n \abs{E^ku_i(0)}^2}^{\frac{1}{2}} \leq \tau^{k-1+\frac{1}{p'}}\pset*{\sum_{i=1}^n \norm{u_i^{(k)}}_{L^p}^2}^{\frac{1}{2}}
%	= \tau^{k-1+\frac{1}{p'}} \norm{u^{(k)}}_{L^p}^2.
%	\]
	For $p=\infty$, the inequality immediately follows by noting that
	\begin{multline*}
		\abs[\big]{E^ku(0)} = \abs[\big]{E^{k-1}\big(u(0)-u(-\tau)\big)} = \abs*{\int_{-\tau}^0 E^{k-1}\dot u(t_1)\dt_1} \\
		= \ldots = \abs*{\iint_{[-\tau,0]^k}u^{(k)}(t_1+\ldots+t_k)\, \mathrm{d}(t_1\times\ldots\times t_k)}
		\leq \tau^k\norm{u^{(k)}}_{L^\infty}.
	\end{multline*}
	Let us now assume that $p<\infty$.
	The inequality for $k=1$ immediately follows from Hölder's inequality, since
	\[
	\abs{Eu(0)}
	= \abs*{ \int_{-\tau}^0\dot u(t) \dt }
	\leq \norm{\dot u}_{L^1}
	\leq \norm{1}_{L^{p'}}\norm{\dot u}_{L^p}
	= \tau^{\frac{1}{p'}}\norm{\dot u}_{L^p}.
	\]
	We now prove the statement for $k\geq 2$ by induction.
	Let us assume that the theorem statement holds for $k-1$. Noting that $Eu=u-u_\tau\in W^{k-1,p}(-(k-1)\tau,0;\R)$ with $(Eu)^{(k-1)}=Eu^{(k-1)}$, we deduce that
	\[
	\abs[\big]{E^ku(0)}
	= \abs[\big]{E^{k-1}Eu(0)}
	\leq \tau^{k-2+\frac{1}{p'}}\norm[\big]{Eu^{(k-1)}}_{L^p}
	\]
	and
%	\begin{align*}
%		\norm[\big]{Eu^{(k-1)}}_{L^p}
%		&= \pset*{\int_{-(k-1)\tau}^0\abs*{ u^{(k-1)}(t) - u^{(k-1)}(t-\tau) }^p \dt}^{\frac{1}{p}} \\
%		&\leq \pset*{\int_{-(k-1)\tau}^0 \abs*{ \int_{-\tau}^0 u^{(k)}(t+s)\ds }^p\dt}^{\frac{1}{p}} \\
%		&\leq \tau^{\frac{1}{p'}}\pset*{\int_{-(k-1)\tau}^0 \int_{-\tau}^0 \abs*{ u^{(k)}(t+s) }^p \ds\dt}^{\frac{1}{p}},
%	\end{align*}
	\begin{align*}
		\norm[\big]{Eu^{(k-1)}}^p_{L^p}
		\leq \int_{-(k-1)\tau}^0 \abs*{ \int_{-\tau}^0 u^{(k)}(t+s)\ds }^p\dt 
		\leq \tau^{\frac{1}{p'}} \int_{-(k-1)\tau}^0 \int_{-\tau}^0 \abs*{ u^{(k)}(t+s) }^p \ds\dt,
	\end{align*}
	where we applied again Hölder's inequality.
	Then, by applying the change of variables $r=s+t$ and defining the measurable set
	\[
	K_r = \big\{ s \in [-\tau,0] \mid s-r \in [0,k\tau] \big\}
	\]
	and its Lebesgue measure $f(r)=\lambda(K_r)$, which fulfills $0\leq f(r)\leq\tau$, we can write
	\begin{multline*}
		\pset*{\int_{-(k-1)\tau}^0 \int_{-\tau}^0 \abs*{ u^{(k)}(t+s) }^p \ds\dt}^{\frac{1}{p}}
		= \pset*{\int_{-k\tau}^0 \int_{K_r} \abs*{ u^{(k)}(r) }^p \ds\dr}^{\frac{1}{p}} \\
		= \pset*{\int_{-k\tau}^0 \abs*{ u^{(k)}(r) }^p f(r) \dr}^{\frac{1}{p}}
		\leq \norm{f}_{L^{\infty}}^{\frac{1}{p}}\norm{u^{(k)}}_{L^p}
		= \tau^{\frac{1}{p}}\norm{u^{(k)}}_{L^p}.
	\end{multline*}
	Putting everything together, we obtain $\abs{E^ku(0)} \leq \tau^{k-1+\frac{1}{p'}}\norm{u^{(k)}}_{L^p}$.
\end{proof}
Next, we recall the approximation properties of the $\bdf$ formulae.
\begin{lemma}
\label{lem:BDF1_L2_error}
	Let $u\in W^{3,p}(-2\tau,0;\R)$ with $p\in[1,\infty]$. Then it holds that
	\begin{equation}
		\abs[\big]{\bdf_1 u(0) - \dot u(0)} \lesssim \tau^{2-\frac{1}{p}}\norm{u^{(3)}}_{L^p}.
	\end{equation}
\end{lemma}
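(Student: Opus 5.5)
The claim is that $\bdf_1 u(0)=\tfrac1\tau\bigl(\tfrac32u(0)-2u(-\tau)+\tfrac12u(-2\tau)\bigr)$ approximates $\dot u(0)$ with error $\lesssim\tau^{2-1/p}\norm{u^{(3)}}_{L^p}$. My plan is a Taylor expansion with integral remainder about $t=0$, followed by Hölder's inequality, in the same spirit as the proof of \Cref{lem:Ek_Lp_error}.

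\textbf{Step 1: expand.} For $s\in\{-\tau,-2\tau\}$ write
\[
u(s)=u(0)+s\dot u(0)+\tfrac{s^2}{2}\ddot u(0)+\tfrac12\int_0^s (s-r)^2u^{(3)}(r)\dr .
\]

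\textbf{Step 2: cancel the low-order terms.} Insert these expansions into $\tau\bdf_1u(0)$. The coefficients $\tfrac32-2+\tfrac12=0$ remove $u(0)$. The combination $-2(-\tau)+\tfrac12(-2\tau)=\tau$ reproduces $\tau\dot u(0)$. For the second-order term, $-2\tfrac{\tau^2}{2}+\tfrac12\tfrac{4\tau^2}{2}=0$, so $\ddot u(0)$ drops out. This is just the statement that $\bdf_1$ is exact on quadratics. Hence
\[
\tau\bigl(\bdf_1u(0)-\dot u(0)\bigr)=-\int_{-\tau}^0(r+\tau)^2u^{(3)}(r)\dr+\tfrac14\int_{-2\tau}^0(r+2\tau)^2u^{(3)}(r)\dr ,
\]
where I have reoriented the integrals over $[s,0]$; signs are immaterial once absolute values are taken.

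\textbf{Step 3: estimate.} Both kernels are bounded by $4\tau^2$ on $[-2\tau,0]$. Therefore
\[
\abs{\bdf_1u(0)-\dot u(0)}\lesssim \tau\int_{-2\tau}^0\abs{u^{(3)}(r)}\dr\le \tau\,(2\tau)^{1-1/p}\norm{u^{(3)}}_{L^p},
\]
using Hölder's inequality with conjugate exponent $p'$ (for $p=\infty$ read $1/p=0$, and for $p=1$ no Hölder step is needed). This gives $\tau^{2-1/p}$, as claimed.

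The only point requiring care is justifying the Taylor formula with integral remainder for $W^{3,p}$ functions. This is standard, since such $u$ has an absolutely continuous representative with absolutely continuous $\ddot u$, which also makes the point evaluations meaningful. The rest is bookkeeping of the coefficients, so I do not expect a genuine obstacle.
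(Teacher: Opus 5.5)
Your proposal is correct and follows essentially the same route as the paper: Taylor expansion with integral remainder about $0$, cancellation of the $u(0)$ and $\ddot u(0)$ contributions by the $\bdf_1$ weights, a bound by $\tau\int_{-2\tau}^0\abs{u^{(3)}}\dt$, and H\"older's inequality on an interval of length $2\tau$. The identity displayed in your Step~2 has the opposite overall sign; the correct right-hand side is $\int_{-\tau}^0(r+\tau)^2u^{(3)}(r)\dr-\tfrac14\int_{-2\tau}^0(r+2\tau)^2u^{(3)}(r)\dr$, but, as you note, this is irrelevant once absolute values are taken.
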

\begin{proof}
	By the integral form of Taylor's remainder:
	\begin{align*}
		u(-\tau) &= u(0) - \tau\dot u(0) + \frac{\tau^2}{2}\ddot u(0) - \frac{1}{2}\int_{-\tau}^0(\tau+t)^2u^{(3)}(t) \dt, \\
		u(-2\tau) &= u(0) - 2\tau\dot u(0) + 2\tau^2\ddot u(0) - \frac{1}{2}\int_{-2\tau}^0(2\tau+t)^2u^{(4)}(t) \dt.
	\end{align*}
	Then
	\begin{multline*}
		\abs{\bdf_1u(0) - \dot u(0)}
		%		= \frac{1}{\tau^2}(E^2+E^3)u(0) - \ddot u(0) \\
		= \abs*{\frac{1}{\tau}\pset*{ \frac{3}{2}u(0) - 2u(-\tau) + \frac{1}{2}u(-2\tau) } - \dot u(0)} \\
		= \frac{1}{2\tau}\abs*{ \int_{-\tau}^0(\tau+t)^2u^{(3)}(t) \dt - \frac{1}{4}\int_{-2\tau}^0(2\tau+t)^2u^{(3)}(t) \dt}
		\\
		\lesssim \tau\int_{-2\tau}^0\abs{u^{(3)}}\dt
		\lesssim \tau^{2-\frac{1}{p}}\norm{u^{(3)}}_{L^p},
	\end{multline*}
	where the last step follows from Hölder's inequality.
\end{proof}

\begin{lemma}
\label{lem:BDF2_L2_error}
	Let $u\in W^{4,p}(-3\tau,0;\R)$ with $p\in[1,\infty]$. Then it holds that
	\begin{equation}
		\abs[\big]{\bdf_2 u(0) - \ddot u(0)} \lesssim \tau^{2-\frac{1}{p}}\norm{u^{(4)}}_{L^p}.
	\end{equation}
\end{lemma}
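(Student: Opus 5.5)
The plan is to mirror the proof of \Cref{lem:BDF1_L2_error}: expand each shifted value around $t=0$ by Taylor's formula with integral remainder, cancel the polynomial part by consistency of the formula, and bound the remainder by Hölder's inequality. Recall that $\bdf_2 u(0) = \tfrac{1}{\tau^2}\big(2u(0)-5u(-\tau)+4u(-2\tau)-u(-3\tau)\big)$. For $j=1,2,3$ I write
\[
u(-j\tau) = \sum_{i=0}^{3} \frac{(-j\tau)^i}{i!}\,u^{(i)}(0) + \frac{1}{6}\int_{-j\tau}^0 (j\tau+t)^3\, u^{(4)}(t)\dt,
\]
with the sign of the remainder fixed by the usual convention for Taylor expansion to the left; only its absolute value matters in the end.

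The first step is the moment check, i.e.\ consistency. The coefficients $(2,-5,4,-1)$ at nodes $(0,-1,-2,-3)$ must satisfy $\sum c_j = 0$ and $\sum c_j(-j) = 0$, which gives $5-8+3=0$. They must also satisfy $\sum c_j j^2/2 = 1$, and indeed $(-5+16-9)/2 = 1$, so that after dividing by $\tau^2$ this term reproduces $\ddot u(0)$. Finally $\sum c_j j^3 = -5+32-27 = 0$ kills the third-derivative term. This is exactly why $\bdf_2 = \tfrac{1}{\tau^2}(E^2+E^3)$ is second-order accurate, and it is the only place where the specific formula enters.

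Consequently
\[
\bdf_2u(0)-\ddot u(0) = \frac{1}{6\tau^2}\sum_{j=1}^3 c_j\int_{-j\tau}^0 (j\tau+t)^3 u^{(4)}(t)\dt .
\]
Since $0\le j\tau+t\le 3\tau$ on each integration interval, every integral is bounded by $27\tau^3\int_{-3\tau}^0\abs{u^{(4)}}\dt$, so the whole expression is $\lesssim \tau\,\norm{u^{(4)}}_{L^1(-3\tau,0)}$. Hölder's inequality on an interval of length $3\tau$ then gives $\norm{u^{(4)}}_{L^1}\le (3\tau)^{1-1/p}\norm{u^{(4)}}_{L^p}$, hence the bound $\tau^{2-1/p}\norm{u^{(4)}}_{L^p}$. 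For $p=\infty$ this reads $\tau^2\norm{u^{(4)}}_{L^\infty}$, consistent with the convention $1/p=0$.

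The only step that needs care is the moment computation, in particular confirming that the cubic moment vanishes, because otherwise only first order would result. Everything else is routine.
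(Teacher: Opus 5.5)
Your proposal is correct and matches the paper's route: the paper proves this lemma by saying it is analogous to \Cref{lem:BDF1_L2_error}, i.e., Taylor expansion of the shifted values with integral remainder, cancellation of the polynomial part, and H\"older's inequality on an interval of length $O(\tau)$. Your moment checks (in particular the vanishing cubic moment $-5+32-27=0$), the sign of the remainder, and the final bound $\tau\,\norm{u^{(4)}}_{L^1(-3\tau,0)}\lesssim\tau^{2-1/p}\norm{u^{(4)}}_{L^p}$ are all correct.
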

\begin{proof}
%	By the integral form of Taylor's remainder:
%	%
%	\begin{align*}
%		u(-\tau) &= u(0) - \tau\dot u(0) + \frac{\tau^2}{2}\ddot u(0) - \frac{\tau^3}{6}\dddot u(0) + \frac{1}{6}\int_{-\tau}^0(\tau+t)^3u^{(4)}(t) \dt, \\
%		u(-2\tau) &= u(0) - 2\tau\dot u(0) + 2\tau^2\ddot u(0) - \frac{4}{3}\tau^3\dddot u(0) + \frac{1}{6}\int_{-2\tau}^0(2\tau+t)^3u^{(4)}(t) \dt, \\
%		u(-3\tau) &= u(0) - 3\tau\dot u(0) + \frac{9}{2}\tau^2\ddot u(0) - \frac{9}{2}\tau^3\dddot u(0) + \frac{1}{6}\int_{-3\tau}^0(3\tau+t)^3u^{(4)}(t) \dt.
%	\end{align*}
%	%
%	Then
%	%
%	\begin{multline*}
%		\abs{\bdf_2u(0) - \ddot u(0)}
%%		= \frac{1}{\tau^2}(E^2+E^3)u(0) - \ddot u(0) \\
%		= \abs*{\frac{1}{\tau^2}\pset[\big]{ 2u(0) - 5u(-\tau) + 4u(-2\tau) - u(-3\tau) } - \ddot u(0)} \\
%		= \frac{1}{6\tau^2}\abs*{ -5\int_{-\tau}^0(\tau+t)^3u^{(4)}(t) \dt + 4\int_{-2\tau}^0(2\tau+t)^3u^{(4)}(t) \dt - \int_{-3\tau}^0(3\tau+t)^3u^{(4)}(t) \dt }
%		\\
%		\lesssim \tau\int_{-3\tau}^0\abs{u^{(4)}}\dt
%		\lesssim \tau^{2-\frac{1}{p}}\norm{u^{(4)}}_{L^p},
%	\end{multline*}
%	%
%	where the last step follows from Hölder's inequality.
	The proof is analogous to the one of \Cref{lem:BDF1_L2_error}.
\end{proof}
More generally, we denote by $\bdf_k=\tau^{-k}(E^k+\tfrac{k}{2}E^{k+1})$, $k\geq 0$, the second-order $\bdf$ formula for the $k$-th derivative, $k\geq 0$. %We omit the proof that this formula holds.
We now show that by testing $\bdf_{k+1}$ with $\bdf_k$ we obtain an expression for discrete sequences related to the formula $\tfrac{1}{2}\ddt\norm{u^{(k)}}^2=\aset{u^{(k)},u^{(k+1)}}$ in the continuous setting.
\begin{lemma}
\label{lem:bdf-bdf_formula}
	The formula
	\begin{multline}
		\tau^{2k+1}\aset{\bdf_kv^n,\bdf_{k+1}v^n} \\
		= \tfrac{(k+1)(k+2)}{8}\norm{E^{k+2}v^n}^2
		+ E\pset*{
			\tfrac{1}{2(k+2)}\norm{E^kv^n}^2
			+ \tfrac{k+1}{2(k+2)} \norm*{ E^kv + \tfrac{k+2}{2}E^{k+1}v^n }^2
		}
	\end{multline}
	holds for all $k\geq 0$, $n\geq k+1$, and every sequence $v^n$.
\end{lemma}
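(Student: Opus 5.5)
The plan is to reduce the identity to an algebraic identity between quadratic forms in three vectors, and then compare coefficients. By definition $\bdf_k=\tau^{-k}(E^k+\tfrac{k}{2}E^{k+1})$, so the powers of $\tau$ cancel exactly:
\[
	\tau^{2k+1}\aset{\bdf_kv^n,\bdf_{k+1}v^n}
	= \aset{a^n+\alpha Ea^n,\ Ea^n+\beta E^2a^n},
	\qquad a^n\coloneqq E^kv^n,\quad \alpha=\tfrac{k}{2},\quad \beta=\tfrac{k+1}{2}.
\]
The right-hand side of the claim has the same form in $a$. It reads $\tfrac{(k+1)(k+2)}{8}\norm{E^2a^n}^2+E\big(c_1\norm{a^n}^2+c_2\norm{w^n}^2\big)$, where $c_1=\tfrac{1}{2(k+2)}$, $c_2=\tfrac{k+1}{2(k+2)}$, $w^n=a^n+\gamma Ea^n$ and $\gamma=\tfrac{k+2}{2}$. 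The assumption $n\ge k+1$ only ensures that the entries $v^{n-1},\dots$ appearing after applying one more $E$ are defined, with the paper's conventions for the initial indices. I treat this as bookkeeping.

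The key step is to write every term as a quadratic form in the three vectors $a^n$, $Ea^n$, $E^2a^n$. For the two terms under $E$ I use the discrete integration by parts formula \eqref{eq:discreteIntegrationByParts:c} in the form
\[
E\norm{x^n}^2 = 2\aset{x^n,Ex^n}-\norm{Ex^n}^2 .
\]
For $x=a$ this gives $2\aset{a,Ea}-\norm{Ea}^2$. For $x=w$ I use $Ew=Ea+\gamma E^2a$, which gives
\[
E\norm{w}^2 = 2\aset{a,Ea}+2\gamma\aset{a,E^2a}+(2\gamma-1)\norm{Ea}^2+2\gamma(\gamma-1)\aset{Ea,E^2a}-\gamma^2\norm{E^2a}^2 .
\]
Expanding the left-hand side gives
\[
\aset{a,Ea}+\beta\aset{a,E^2a}+\alpha\norm{Ea}^2+\alpha\beta\aset{Ea,E^2a}.
\]
It then remains to match the coefficients of $\norm{a}^2$, $\norm{Ea}^2$, $\norm{E^2a}^2$, $\aset{a,Ea}$, $\aset{a,E^2a}$ and $\aset{Ea,E^2a}$:
\begin{itemize}
\item The $\norm{a}^2$ coefficient vanishes on both sides.
\item For $\aset{a,Ea}$: $2c_1+2c_2=1$.
\item For $\aset{a,E^2a}$: $2\gamma c_2=\tfrac{k+1}{2}=\beta$.
\item For $\norm{Ea}^2$: $-c_1+(2\gamma-1)c_2=\tfrac{(k+1)^2-1}{2(k+2)}=\tfrac{k}{2}=\alpha$.
\item For $\aset{Ea,E^2a}$: $2\gamma(\gamma-1)c_2=\tfrac{k(k+1)}{4}=\alpha\beta$.
\item For $\norm{E^2a}^2$: $\tfrac{(k+1)(k+2)}{8}-\gamma^2c_2=0$.
\end{itemize}
All six coefficients agree, which proves the identity.

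No deep obstacle is expected. The only delicate point is that the terms under $E$ involve the shifted quantities $a^{n-1}$ and $w^{n-1}$. These have to be re-expressed through $a^{n-1}=a^n-Ea^n$ and $Ea^{n-1}=Ea^n-E^2a^n$, so that everything lives in the same three-vector basis. This is exactly what the integration by parts formula above does, and with it the coefficient comparison is routine. Since only bilinearity and symmetry of the inner product are used, the identity also holds for any symmetric (possibly indefinite) bilinear form, as was needed for the $M$- and $A$-inner products.
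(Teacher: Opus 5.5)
Your argument is correct. The $\tau$-powers cancel exactly as you say, your expansion of $E\norm{w^n}^2$ is right, and all six coefficient identities check out; for instance $-c_1+(2\gamma-1)c_2=\tfrac{(k+1)^2-1}{2(k+2)}=\tfrac{k}{2}$ and $2\gamma(\gamma-1)c_2=\tfrac{k(k+1)}{4}$.

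The paper proceeds in the opposite direction, expanding only the left-hand side:
\begin{enumerate}
\item It applies discrete integration by parts to $\tfrac{k+1}{2}\aset{E^kv^n,E^{k+2}v^n}$.
\item It then uses formula \eqref{eq:discreteIntegrationByParts:c} on $\aset{E^kv^n,E^{k+1}v^n}$ and $\aset{E^{k+1}v^n,E^{k+2}v^n}$. This gives $\tfrac{(k+1)(k+2)}{8}\norm{E^{k+2}v^n}^2+E\big(\tfrac12\norm{E^kv^n}^2+\tfrac{(k+1)(k+2)}{8}\norm{E^{k+1}v^n}^2+\tfrac{k+1}{2}\aset{E^kv^n,E^{k+1}v^n}\big)$.
\item Finally, it completes the square in the cross term under $E$.
\end{enumerate}

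That forward derivation shows where the weights $\tfrac{1}{2(k+2)}$, $\tfrac{k+1}{2(k+2)}$ and the shift $\tfrac{k+2}{2}$ come from. It also exhibits the quantity under $E$ as a manifestly nonnegative sum of squares, which is what $\varphi^n$ needs in order to serve as an energy.

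Your coefficient comparison in the basis $a^n,Ea^n,E^2a^n$ takes the constants as given and verifies them. It is more mechanical and easier to check line by line. As you note, it visibly uses only bilinearity and symmetry. However, it gives no indication of how the formula was found.

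A minor point on your bookkeeping remark: the left-hand side already contains $E^{k+2}v^n$, hence $v^{n-k-2}$. For a sequence starting at index $0$ one therefore needs $n\ge k+2$, not $n\ge k+1$. This off-by-one is in the statement, not in your proof, and the identity holds whenever all terms are defined.
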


\begin{proof}
	By applying discrete integration by parts, we obtain
	\begin{align*}
		&\tau^{2k+1}\aset{\bdf_kv^n,\bdf_{k+1}v^n}
		= \aset[\big]{(E^k+\tfrac{k}{2}E^{k+1})v^n , E^{k+1}+\tfrac{k+1}{2}E^{k+2}v^n}
		\\
		&\ = \aset{E^kv^n,E^{k+1}v^n}
		+ \tfrac{k}{2}\norm{E^{k+1}v^n}^2
		+ \tfrac{k+1}{2}\aset{E^kv^n,E^{k+2}v^n}
		+ \tfrac{k(k+1)}{4}\aset{E^{k+1}v^n,E^{k+2}v^n}
		\\
		&\ = \aset{E^kv^n,E^{k+1}v^n}
		- \tfrac{1}{2}\norm{E^{k+1}v^n}^2
		+ \tfrac{(k+1)(k+2)}{4}\aset{E^{k+1}v^n,E^{k+2}v^n}
		+ \tfrac{k+1}{2}E\aset{E^kv^n,E^{k+1}v^n}
		\\
		&\ = \tfrac{(k+1)(k+2)}{8}\norm{E^{k+2}v^n}^2
		+ E\pset*{
			\tfrac{1}{2}\norm{E^kv^n}^2
			+ \tfrac{(k+1)(k+2)}{8}\norm{E^{k+1}v^n}^2
			+ \tfrac{k+1}{2}\aset{E^kv^n,E^{k+1}v^n}
		}.
	\end{align*}
	Then, by substituting
	\begin{align*}
		&\tfrac{k+1}{2}\,\aset{E^kv^n,E^{k+1}v^n}
		= 2\, \aset*{ \sqrt{\tfrac{k+1}{2(k+2)}}E^kv , \sqrt{\tfrac{(k+1)(k+2)}{8}}E^{k+1}v^n }
		\\
		&\ = \norm*{ \sqrt{\tfrac{k+1}{2(k+2)}}E^kv + \sqrt{\tfrac{(k+1)(k+2)}{8}}E^{k+1}v^n }^2
		- \tfrac{k+1}{2(k+2)}\, \norm{E^kv}^2
		- \tfrac{(k+1)(k+2)}{8}\, \norm{E^{k+1}v^n}^2
		\\
		&\ = \tfrac{k+1}{2(k+2)}\, \norm*{ E^kv + \tfrac{k+2}{2}E^{k+1}v^n }^2
		- \tfrac{k+1}{2(k+2)}\, \norm{E^kv}^2
		- \tfrac{(k+1)(k+2)}{8}\, \norm{E^{k+1}v^n}^2,
	\end{align*}
	we obtain the claimed formula. 
%	\begin{multline*}
%		\tau^{2k+1}\aset{\bdf_kv^n,\bdf_{k+1}v^n} \\
%		= \tfrac{(k+1)(k+2)}{8}\norm{E^{k+2}v^n}^2
%		+ E\pset*{
%			\tfrac{1}{2(k+2)}\norm{E^kv^n}^2
%			+ \tfrac{k+1}{2(k+2)} \norm*{ E^kv + \tfrac{k+2}{2}E^{k+1}v^n }^2
%		},
%	\end{multline*}
%	concluding the proof.
\end{proof}

We conclude this Appendix with a special discrete version of Grönwall's lemma.

\begin{lemma}\label{lem:discreteGronwallMinimal}
	Let $a_n,b_n\geq 0$ be two sequences %of nonnegative numbers 
	with $b_n$ nondecreasing. Further, let $\alpha\in(0,1)$ and $\kappa\in[0,1-\alpha]$ be constants. Suppose that
	$
		a_m \leq b_m + \kappa\sum_{n=0}^m a_n
	$
	for all $m\geq 0$. Then
	$
		a_m \leq e^{\frac{(m+1)\kappa}{\alpha}}b_m
	$
	holds for all $m\geq 0$.
\end{lemma}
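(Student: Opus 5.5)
The plan is to prove first the sharper bound $a_m \le b_m\,(1-\kappa)^{-(m+1)}$ by induction on $m$, and then turn the power into an exponential using $\kappa\le 1-\alpha$.

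First I isolate the last term of the sum. Since $\kappa\le 1-\alpha<1$, the hypothesis rearranges to
\[
	(1-\kappa)\,a_m \le b_m + \kappa\sum_{n=0}^{m-1} a_n,
	\qquad\text{i.e.}\qquad
	a_m \le \tfrac{1}{1-\kappa}\Big(b_m + \kappa\sum_{n=0}^{m-1}a_n\Big).
\]
Set $q\coloneqq\frac{1}{1-\kappa}\ge 1$. I then show $a_m\le q^{m+1}b_m$ by strong induction.

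For $m=0$ the sum is empty, so $a_0\le q\,b_0$. For the inductive step, assume $a_n\le q^{n+1}b_n\le q^{n+1}b_m$ for all $n<m$; the last inequality uses that $b_n$ is nondecreasing. Then
\[
	a_m \le q\,b_m\Big(1+\kappa\sum_{n=0}^{m-1}q^{n+1}\Big).
\]
The geometric sum telescopes: $\kappa q = q-1$ gives $\kappa\sum_{n=0}^{m-1}q^{n+1}=(q-1)\frac{q^{m}-1}{q-1}=q^m-1$. So the bracket equals $q^m$, and $a_m\le q^{m+1}b_m$. (If $\kappa=0$ the claim is immediate.)

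Finally I bound $q$. From $\log q=-\log(1-\kappa)=\int_0^\kappa\frac{ds}{1-s}\le\frac{\kappa}{1-\kappa}\le\frac{\kappa}{\alpha}$, since $1-\kappa\ge\alpha$, it follows that $q^{m+1}\le e^{(m+1)\kappa/\alpha}$, which is the claim.

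The only delicate point is absorbing the $n=m$ term on the left. This needs $\kappa<1$, which the assumption $\kappa\le1-\alpha$ provides, and it is exactly this same assumption that produces the factor $1/\alpha$ in the exponent.
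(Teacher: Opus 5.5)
Your proof is correct and follows the paper's two-step structure: first the intermediate bound $a_m\le b_m(1-\kappa)^{-(m+1)}$, then $\frac{1}{1-\kappa}\le e^{\kappa/\alpha}$ using $1-\kappa\ge\alpha$. The paper takes the intermediate bound from Emmrich's discrete Gr\"onwall proposition, whereas your strong induction with the identity $\kappa q=q-1$ proves it directly, and you bound $\log q$ by an integral where the paper uses $1+x\le e^x$.
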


\begin{proof}
By applying \cite[Prop.~4.1]{Emm99ppt}, we deduce $a_m\leq b_m(\frac{1}{1-\kappa})^{m+1}$. The claimed result then follows from
\[
	\frac{1}{1-\kappa} 
	= 1 + \frac{\kappa}{1-\kappa} 
	\leq 1 + \frac{\kappa}{\alpha} 
	\leq e^{\frac{\kappa}{\alpha}}.
	\qedhere
\]
%	\Cref{lem:discreteGronwall_final} with $\alpha_0=b_0,\ \alpha_n=Eb_n$ for $n\geq 1$, and $\kappa_n=\kappa\leq 1-\alpha<1$, we obtain
%	\[
%	a_m \leq \exp\pset*{ (m+1)\frac{\kappa}{1-\kappa} } b_m \leq e^{\frac{(m+1)\kappa}{\alpha}} b_m
%	\]
%	for all $m\geq 0$, concluding the proof.
\end{proof}
%
%
%
%======================================================================
%=========  Appendix B
%======================================================================
\section{On the coefficients of the semidiscretized extended model}
\label{app:semidiscExtModel}

As mentioned in \Cref{rem:semidiscExtModel}, the matrix $D_\Gamma\in\R^{N_\Gamma,N_\Gamma}$ obtained via the spatial discretization
%(see \Cref{app:advectionMatricesFEM} for implementation details)
does not satisfy $D_\Gamma\geq 0$ automatically, unless additional assumptions on the finite element method for $p$ or on the coefficients of the PDE are taken.
In presence of non-zero advection fields $\gamma_\Omega$ or $\gamma_\Gamma$, the condition \eqref{eq:adv_cond_surf}, which ensures the monotonicity of $\calR_\Gamma$, might fail in the discrete setting when $\Gamma_h\neq\Gamma$. In fact, the counterpart of $\calR_\Gamma$ for the semidiscretized problem would be
\[
\calR_\Gamma^h(p,q) = \delta_\Gamma\int_{\Gamma_h}\nabla_\Gamma p\cdot\nabla_{\Gamma_h} q\dx + \int_{\Gamma_h} \Big(\big(\alpha_\Gamma+\tfrac{1}{2}(\gamma_\Omega^h\cdot n_{\Gamma_h}-\nabla_{\Gamma_h}\cdot \gamma_\Gamma^h)\big)\,pq + \tfrac{1}{2}\nabla_{\Gamma_h}\cdot(\gamma_\Gamma^h pq)\dx\Big),
\]
where $\gamma_\Omega^h$ and $\gamma_\Gamma^h$ are appropriate approximations of the vector fields on $\Gamma$, obtained for example using lift operations or interpolation~\cite{HipK21}, and $n_{\Gamma_h}$ denotes the outward unit vector normal to $\Gamma_h$.
On the one hand, the integral $\int_{\Gamma_h}\nabla_{\Gamma_h}\cdot(\gamma_\Gamma^h pq)\dx$, which appears due to integration by parts but vanishes in the continuous setting, is in general nontrivial for polyhedral $\Gamma_h$.
On the other hand, the condition \eqref{eq:adv_cond_surf} does not necessarily guarantee $\alpha_\Gamma+\tfrac{1}{2}(\gamma_\Omega^h\cdot n_{\Gamma_h}-\nabla_{\Gamma_h}\cdot\gamma_\Gamma^h)\geq 0$.

One way out of this problem would be to use finite elements with curved boundaries. Another possibility is to replace the condition \eqref{eq:adv_cond_surf} by a stronger one, for example
\begin{equation}
	\alpha_\Gamma + \tfrac{1}{2}(\gamma_\Omega\cdot n - \nabla_\Gamma\cdot\gamma_\Gamma) 
	\geq c_0 \big(\norm{\gamma_\Gamma}_{L^\infty}+\norm{\calJ_{\gamma_\Omega}}_{H^1}\big),
\end{equation}
where $\calJ_{\gamma_\Omega}$ denotes the Jacobian of $\gamma_\Omega$, and $c_0>0$ is a sufficiently large constant, only depending on $\Gamma$. Then, if $P_k$-finite elements are used to approximate $p$ and the triangulation $\mathcal T_\Gamma$ is sufficiently fine, one can verify that $D_\Gamma+D_\Gamma^\top\geq 0$ holds (in fact, $D_\Gamma+D_\Gamma^\top>0$ as long as $\gamma_\Omega\neq 0$ or $\gamma_\Gamma\neq 0$). Different choices of finite elements could also help to ensure this condition.

The coefficient matrices depend on the coefficients of the extended model in the following way:
\begin{subequations}
	\begin{align}
		D_\Omega &= \delta_\Omega S_\Omega + J_\Omega(\gamma_\Omega) + W_\Omega(\alpha_\Omega,\gamma_\Omega), \\
		A_\Omega &= \beta_\Omega S_\Omega + \kappa_\Omega M_\Omega, \\
		D_\Gamma &= \delta_\Gamma S_\Gamma + J_\Gamma(\gamma_\Gamma) + W_\Gamma(\alpha_\Gamma,\gamma_\Gamma,\gamma_\Omega), \\
		A_\Gamma &= \beta_\Gamma S_\Gamma + \kappa_\Gamma M_\Gamma,
	\end{align}
\end{subequations}
where the matrices $S_\Omega,J_\Omega(\gamma_\Omega),W_\Omega(\alpha_\Omega,\gamma_\Omega)\in\R^{N_\Omega,N_\Omega}$ and $S_\Gamma,J_\Gamma(\gamma_\Gamma),W_\Gamma(\alpha_\Gamma,\gamma_\Gamma,\gamma_\Omega)\in\R^{N_\Gamma,N_\Gamma}$ are constructed using finite elements. In the following, we omit to write the explicit dependency of $J_\Omega$, $J_\Gamma$, $W_\Omega$, and $W_\Gamma$ from the system coefficients.
The matrices $S_\Omega$, $S_\Gamma$, $W_\Omega$, and $W_\Gamma$ are symmetric positive semidefinite, $J_\Omega$ and $J_\Gamma$ are skew-symmetric, and the bounds
\begin{subequations}
	\begin{align}
		\aset{v,S_\Omega u} 
		&\leq Ch^{-2}\norm{v}_{M_\Omega}\norm{u}_{M_\Omega}, \\
		%			\aset{q,p}_{S_\Gamma} &\leq C\frac{1}{h^2}\norm{q}_{M_\Gamma}\norm{p}_{M_\Gamma}, \\
		\aset{v,J_\Omega u} 
		&\leq C\, \norm{\gamma_\Omega}_{L^\infty}\big(\norm{v}_{M_\Omega}\norm{u}_{S_\Omega} + \norm{v}_{S_\Omega}\norm{u}_{M_\Omega}\big), \\
		\aset{v,J_\Omega u} 
		&\leq Ch^{-1}\norm{\gamma_\Omega}_{L^\infty}\norm{v}_{M_\Omega}\norm{u}_{M_\Omega} \\
		\aset{q,J_\Gamma p} 
		&\leq C\, \norm{\gamma_\Gamma}_{L^\infty}\big(\norm{q}_{M_\Gamma}\norm{p}_{S_\Gamma} + \norm{q}_{S_\Gamma}\norm{p}_{M_\Gamma}\big), \\
		\aset{v,W_\Omega u} 
		&\leq C \big(\alpha_\Omega+\norm{\nabla\cdot\gamma_\Omega}_{L^\infty}\big) \norm{v}_{M_\Omega}\norm{u}_{M_\Omega}, \\
		\aset{q,W_\Gamma p} 
		&\leq C \big(\alpha_\Gamma + \norm{\gamma_\Omega}_{L^\infty} + \norm{\gamma_\Gamma}_{W^{1,\infty}}\big) \norm{q}_{M_\Gamma}\norm{p}_{M_\Gamma}
	\end{align}
\end{subequations}
hold for all $u,v\in\R^{N_\Omega}$ and $p,q\in\R^{N_\Gamma}$, for some constant $C>0$ depending only on $\Gamma$.

\end{document}